\soulregister{\eqref}{7}
\newtheorem{thm}{Theorem}[section]
\newtheorem*{thm 1.1}{\textup{\textbf{Theorem 1.1}}}
\newtheorem*{thm 1.2}{\textup{\textbf{Theorem 1.2}}}
\newtheorem*{thm 1.3}{\textup{\textbf{Theorem 1.3}}}
\newtheorem*{assumptionA}{\textup{\textbf{Assumption A}}}
\newtheorem*{assumptionB}{\textup{\textbf{Assumption B}}}
\newtheorem{cor}[thm]{Corollary}
\newtheorem{prop}[thm]{Proposition}
\newtheorem{define}[thm]{Definition}
\newtheorem{rem}[thm]{Remark}
\newtheorem{lemma}[thm]{Lemma}
\newcommand{\p}{\partial}
\newcommand{\beq}{\begin{equation}}
	\newcommand{\eeq}{\end{equation}}
\newcommand{\ben}{\begin{eqnarray}}
	\newcommand{\een}{\end{eqnarray}}
\newcommand{\beno}{\begin{eqnarray*}}
	\newcommand{\eeno}{\end{eqnarray*}}
\numberwithin{equation}{section}
\subjclass[]{}
\keywords{}
\begin{document}
	\title{singular profile of free boundary of incompressible inviscid fluid with external force}
	
	\author{LILI DU$^{1,2}$}
	\author{YANG PU$^3$}
	\author{JING YANG$^1$}
	\thanks{* Du is supported by National Nature Science Foundation of China Grant 12125102, Nature Science Foundation of Guangdong Province under Grant 2024A1515012794, and Shenzhen Science and Technology Program JCYJ20241202124209011. Pu is supported by National Nature Science Foundation of China Grant 12201504 and 12371210.}
	\thanks{E-mail: dulili@scu.edu.cn.
	 \quad E-mail: puyang1011@126.com. \quad E-mail: yangj0058@126.com}
	
	\maketitle 
	
	\begin{center}
		$^1$ Department of Mathematics, Sichuan University,
		
		Chengdu 610064, P. R. China.
	\end{center}
	
	\begin{center}
		$^2$	College of Mathematical Sciences, Shenzhen University,
		
		Shenzhen 518061, P. R. China.
	\end{center}
	
	\begin{center}
$^3$School of Mathematics, Big Data Laboratory on Financial Security and Behavior (Laboratory of Philosophy and Social Sciences, Ministry of Education), Southwestern University of Finance and Economics, Chengdu 611130, P. R. China.
	\end{center}
	
	\begin{abstract}
		This article is devoted to investigate the singular profile of the free boundary of two-dimensional incompressible inviscid fluid with external force near the stagnation point. More precisely, given an external force with some polynomial type decay close to the stagnation point, the singular profile of the free boundary at stagnation point possible are corner wave, flat and cusp singularity. Through excluding the cusp and flat singularity, we know the only singular profile is corner wave singularity, and the corner depends on the decay rate of the solution near the stagnation point. The analysis depends on the geometric method to a class of Bernoulli-type free boundary problem with given degenerate gradient function on free boundary. This work is motivated by the significant work [E. V$\breve{a}$rv$\breve{a}$ruc$\breve{a}$ and G. Weiss, Acta Math, 206, 363-403,  (2011)] on Stokes conjecture to the incompressible inviscid fluid acted on by gravity.  
		
		\noindent{keyword: } Free boundary; Singular profile; Euler flow; External force; Blow-up;\\
		2020 Mathematics Subject classification: 76B15; 35Q31; 35R35		
		
	\end{abstract}
	
	\tableofcontents   
	\section{Background and main results}
	
	The mathematical problem of this paper concerns the motion of the interface separating an invisid, incompressible, irrotational fluid, under the influence of an external force $\textbf{\textit{f}}$, from a region of air in 2-dimensional space. The interface, which exists  between air and fluid is called the free boundary, as illustrated in Fig. 1.
	\begin{figure}[h]
		\includegraphics[width=130mm]{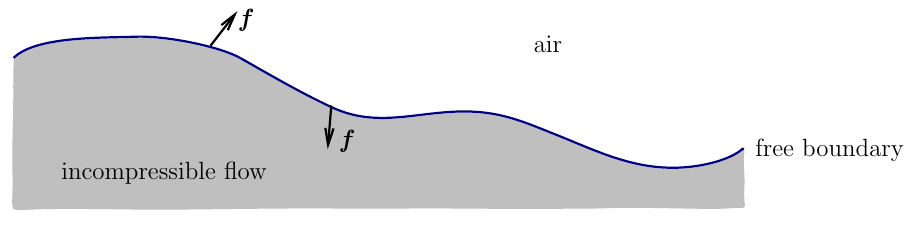}	
		\caption{Physical model}
	\end{figure}
	
	The investigation of flows featuring free boundary between fluid and air has long fascinated both hydrodynamicists and mathematicians, and there is an extensive literature dedicated to the mathematical theory of ideal fluid when influenced by the action of external forces such as gravity, Coriolis force, electromagnetic force, etc. 
	
	The classical problem concerning the motion of water waves in the presence of gravity (e.g., water waves on the surface of the ocean) was first formulated by Isaac Newton in 1687. Since then, there have been some important recent advances, we refer to  
	\cite{IP} and \cite{WS} for two dimensional water waves problem neglecting surface tension.
	
	Additionally, the effect of the earth's solid-body rotation produces Coriolis acceleration, which led to the creation of the Coriolis force named after Coriolis in 1835. Water waves under the influence of Coriolis force are very common in the physics literature, two dimensional obliquely rotating shallow water equations describing a thin inviscid fluid layer flowing over topography in a frame rotating about an arbitrary axis \cite{DP}, an investigation of shallow water equations currents on the propagation of tsunamis \cite{CA} and two-dimensional Camassa-Holm equation arises in the modeling of the propagation of shallow water waves over a flat bed \cite{GG}.
	
	It is well-known that external forces significantly affect the dynamic motion of incompressible flows. Consequently, a central aim of this paper is to investigate the shape of the free boundary at stagnation points under the polynomial type decay of external forces.
	
	\subsection{Mathematical setting of physical problem}
	\
	\newline\indent
	The steady planar flow of an ideal irrotational fluid, which is governed by the isentropic incompressible Euler equations with external forces
	\begin{equation*}
		\begin{aligned}
			\begin{cases}	\operatorname{div}\textbf{u}=0, \\
				(\textbf{u}\cdot\nabla)\textbf{u}+\nabla p=\textbf{\textit{f}},
			\end{cases}
		\end{aligned}
	\end{equation*}
	where the unknown vector function $ \textbf{u}=(u_1(x,y),u_2(x,y)) $ and scalar function $ p(x,y) $ are the fluid velocity and pressure, respectively, and $\textbf{\textit{f}}$ describes the external force.
	
	Assume the fluid  is a zero-vorticity flow characterized by irrotational condition 
	\begin{equation*}
		\operatorname{curl}{\textbf{u}}=0.
	\end{equation*}
	And consequently, the external force $\textbf{\textit{f}}$ is assumed to be conservative, which means that there exists a function $ F(x,y) $ such that
	$$\textbf{\textit{f}}=\nabla F.$$
	Then, we can integrate the equation of momentum energy conservation to obtain Bernoulli’s law
	$$\dfrac{1}{2}|\textbf{u}|^2+p-F(x,y)=\mathcal{B},$$
	where $\mathcal{B}$ is the so-called Bernoulli's constant.
	
	On the free boundary, the fluid satisfies the following slip boundary condition
	\begin{center}
		$\textbf{u}\cdot \textbf{\textbf{n}}=0$.
	\end{center}
	The equation of mass conservation implies that there exists stream function $ u(x,y) $ such that 
	\begin{center}
		$u_1=\p_y u$ and $u_2=-\p_xu$.
	\end{center}
	Then, it is easy to check out that $ u(x,y) $ is harmonic throughout the flow region in view of irrotational condition, whereas the slip boundary condition implies that the free boundary is a level set of $ u(x,y) $. Without loss of generality, one assume that $ \p\{u>0\} $ and $ \{u>0\} $ are the free boundary of the fluid and the fluid region, respectively. Neglecting surface tension, the constant pressure condition holds on the free boundary
	$$p=p_{atm},$$
	where $ p_{atm }$ is the given constant atmopheric pressure.
	
	The Bernoulli’s law gives the gradient condition on the free boundary
	\begin{center}
		$| \nabla u|^2=h(x,y):=2 \left(\mathcal{B}-p_{atm} +F(x,y)\right) \,\,\,\,$ on $\p\{u>0\}$.
	\end{center}
	Since our aim is to get the local singularities, we focus on bounded domain $ \Omega $ which has a non-empty intersection with the free boundary. Along with the harmonic equation, we formulate the following local Bernoulli-type free boundary problem
	\begin{equation}
		\label{eq40}
		\begin{aligned}
			\begin{cases}
				\Delta u =0 &\text{in}\, \Omega\cap\{ u>0\},\\
				\left |\nabla u \right|^2=h(x,y)\,\,&\text{on}\,\Omega\cap\p\{u >0\}.
			\end{cases} 	
		\end{aligned}
	\end{equation}
	\begin{rem}
		It must be noted that the Bernoulli-type free boundary problem above can also describe a traveling wave moving at constant speed on the surface of an incompressible, inviscid, irrotational fluid.
	\end{rem} 
	
	\subsection{Bernoulli-type free boundary problem}
	\
	\newline\indent
	The Bernoulli-type free boundary problem is one of important unconstrained free boundary problems, which arises naturally in a number of physical phenomena. We would like to refer readers to the famous survey \cite{FS} by A. Figalli and H. Shahgholian on it to readers. The solution of Bernoulli-type free boundary problem solves the Laplace equation in its support set and satisfies not only the homogeneous Dirichlet boundary condition, but also the Neumann boundary condition on the free boundary. 
	
	To investigate the properties of  free boundary, the Bernoulli-type problem \eqref{eq40} usually can be divided into two cases according to whether the gradient function $h(x,y)$  degenerates near the free boundary point. The first case is that the gradient function $h(x,y)$ is non-degenerate, namely, there exists a positive constant $c_0$, such that $h(x,y)\geq c_0>0$ on the free boundary. The other case is that there exists a free boundary point $(x^0,y^0)$, such that $h(x^0,y^0)=0$. The point $(x^0,y^0)$ is called the degenerate point of free boundary. In hydrodynamics, a point on the streamline where the velocity is zero is called a stagnation point.
	
	The research on the regularity close to the non-degenerate free boundary point can be dated back to the pioneering work \cite{Alt1981} by H. Alt and L. Caffarelli  in 1981, in which they have showed that there is no singular point on the free boundary in two dimensions, that is to say,  free boundary is locally in $\Omega$ a $C^{1,\alpha}$-surface. However, for higher dimensions ($n\geq3$), G. Weiss in \cite{W} first realized that there existed a maximal dimension $d^*$, such that the singularity of the free boundary could occur only when $n\geq d^*$. Furthermore, it was asserted that $d^*\geq4$ in \cite{CJK}, and $d^*\geq5$ in \cite{JS}. Nevertheless, the regularity theories are widely applied to the physical model with free surface, such as incompressible inviscid jet in \cite{Alt1982,A C F1982,Alt1983,C D X2019}, compressible impinging jet in \cite{C D W2018} and incompressible impinging jet in \cite{CD2023} etc.
	
	On the other hand, the singular wave profile will arise close to the degenerate point of the free boundary, even in two dimensions. This is also related to a very important topic in inviscid water wave, the so-called \textit{Stokes conjecture}. In 1880, G. Stokes conjectured that the free surface of the two-dimensional inviscid irrotational water wave had a symmetric corner of 120 degree at any stagnation point. It has been shown firstly under some structural assumptions by C. Amick, L. Fraenkel and J. Toland \cite{Amick} in 1982 and P. Plotnikov  \cite{Plotnikov2004} in 2002 independently  due to the so-called Nekrasov integral equation. An important breakthrough in the analysis of singular profile is due to the brilliant work \cite{VARVAR2011} by E. V$\breve{a}$rv$\breve{a}$ruc$\breve{a}$ and G. Weiss. They introduced creatively a geometric method to give a rigorous and visual proof to the Stokes conjecture, and all the structural assumptions on the free boundary in the previous work were removed. The proof was based on the blow-up analysis, the monotonicity formula and the frequency formula. Furthermore, the geometric method has also been corroborated to be powerful to deal with the 2D rotational flow \cite{VARVAR2012},
	and the axisymmetric flow in gravitational field \cite{VARVAR2014,DHP,DY}. 
	It is noteworthy that the authors are exploring weak solutions of free boundary problem in \cite{DHP,DY,VARVAR2011,VARVAR2012,VARVAR2014}. Recently, drawing inspiration from \cite{NA}, S. McCurdy considered an interesting problem on the  local minimizer of Alt-Caffarelli functional in \cite{MCC1}, with a degenerate distance function $Q(x)=\operatorname{dist}(x,\Gamma)^\lambda$ for $\lambda>0$ and $\Gamma$ a submanifold. The main content of \cite{MCC1} is using Naber–Valtorta technique to prove the density \textquotedbl gap\textquotedbl \, and containment result such that for one-phase Bernoulli free boundary problems, regularity of the free boundary to the degenerate problem has established.
	Subsequently, in \cite{MCC2}, the author has demonstrated when $\Gamma$ is a flat k-dimensional submanifold, and $\{u>0\}\cap\Gamma=\varnothing$, then the cusp set does not exist.

	The mathematical problem in \cite{VARVAR2011} to attack the Stokes conjecture is formulated into the following two-dimensional one-phase Bernoulli-type free boundary problem
	\begin{equation*}
		\begin{aligned}
			\begin{cases}
				\Delta u =0 &\text{in}\, \Omega\cap\{ u>0\},\\
				\left |\nabla u \right|=\sqrt{2\left( \mathcal{B}-p_{atm}-gy\right) }\,\,&\text{on}\,\Omega\cap\p\{u >0\},
			\end{cases} 	
		\end{aligned}
	\end{equation*}
	where $ g $ is the gravity constant. The force of gravity acting vertically downward is the unique external force considered in \cite{VARVAR2011}. The one key point in the analysis in \cite{VARVAR2011} is that the gradient condition on the free boundary implies the behavior of solution near the stagnation point, that is, $u$ goes like  $ r^{3/2}$ near the stagnation point.
	
	The main motivation of this paper is to consider the singularity of the free boundary to the Euler flow with general external force and meanwhile to analyze the one-phase Bernoulli-type free boundary problem \eqref{eq40} with general gradient function $h(x,y)$. To the end of this paper, we assume the gradient function
	\begin{equation}
		\label{eq41}
		h(x,y)=C|x^{0}-x|^{\alpha}|y^{0}-y|^{\beta},\,\,\,\,\text{ for }\alpha\geq0,\,\beta\geq0,\alpha+\beta>0\,\text{and some constant}\,C,
	\end{equation}
	the set of stagnation points is $\{X=(x,y)|X\in\Omega\cap\p\{u>0\}\cap(\{x=x^0\}\cup\{y=y^0\})\}$.	The problem \eqref{eq40} becomes as follows
	\begin{equation*}
		\begin{cases}
			\Delta u =0\,\,&\text{in $\Omega\cap\{u>0\}$ },\\
			\left |\nabla u \right |^2=C|x^0-x|^{\alpha}|y^0-y|^{\beta}\,\,&\text{on $\Omega\cap\p\{u>0\}$}.
		\end{cases}
	\end{equation*}
	After some normalization, 
	the above problem can then be described by
	\begin{equation}
		\label{eq1}
		\begin{cases}
			\Delta u =0\,\,&\text{in $\Omega\cap\{u>0\}$ },\\
			\left |\nabla u \right |^2=h_0(x,y):=|x|^{\alpha}|y|^{\beta}\,\,&\text{on $\Omega\cap\p\{u>0\}$}.
		\end{cases}
	\end{equation}
	
	In this paper, we focus on the weak solution to the local Bernoulli-type free boundary problem \eqref{eq1} in $\Omega$ in the distributional sense, and we omit the boundary condition on $\p\Omega$.
	
	Obviously, the energy function formally associated with the problem  \eqref{eq1} is 
	$$J(u) :=\int_{\Omega} \left(\left |\nabla u\right |^{2}+|x|^{\alpha}|y|^{\beta}\chi_{\{u>0\}}\right)dX,$$
	which helps us define the weak solution later.	
	\begin{rem}
		It must be remarked that gradient condition on the free boundary implies that the external force in form is 
		\begin{equation*}
			\textbf{\textit{f}}=\nabla F=\dfrac{1}{2}\nabla h_0(x,y)=\frac{1}{2}\left( \alpha|x|^{\alpha-1}|y|^{\beta}\operatorname{sgn}(x),\beta|x|^{\alpha}|y|^{\beta-1}\operatorname{sgn}(y)\right),
		\end{equation*} 
		when $x\neq 0$ and $y\neq 0$, here the function $\operatorname{sgn}(t)$ denotes the sign function of $t$. 
	\end{rem}

	In what follows, we denote $X_0=(x_0,y_0)\in\Omega\cap\p\{u>0\}$ be the possible stagnation point of the free boundary, and analyze the  singular wave profile of the Bernoulli-type free boundary problem \eqref{eq1}. All the possible stagnation points will fall into one of three following categories (see Fig. 2).
	\begin{figure}[h]
		\includegraphics[width=60mm]{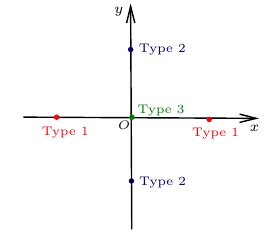}	
		\caption{Three types of stagnation points}
	\end{figure}
	
	\textit{\textbf{Case 1. (Type 1 stagnation point)}  $X_0=(x_0,0)$, for $x_0\neq 0$. Since the $x$ direction is non-degenerate, the singularity arises only in the $ y $ direction, and we suppose that $\beta\geq 1$;}
	
	\textit{\textbf{Case 2. (Type 2  stagnation point)}  $X_0=(0,y_0)$, for $y_0\neq 0$. Since the $y$ direction is non-degenerate, the singularity arises only in the $ x $ direction, and we suppose that $\alpha\geq 1$;}
	
	\textit{\textbf{Case 3. (Type 3 stagnation point)}  $X_0=(0,0)$. The singularity arises both in the $ x $ and $y$ direction, and we suppose that $\alpha\geq1$ and $\beta\geq1$.}
	
\begin{rem}
	The reason we restrict the parameters $\alpha$ and $\beta$ for different cases is to ensure that the external force $\textbf{\textit{f}}$  is a conservative one.
\end{rem}
	
	In addition, $h(x,y)$ can  imply the direction of the external force $\textbf{\textit{f}}$ in Case 1 and 2, briefly speaking, the direction of the force is vertical for Case 1, and the direction of the force is horizontal for Case 2. 
	
	According to the direction of the force and the location of  the stagnation point, we take Case 1 as an example to explain in detail how to classify,
	\begin{equation*}
		\begin{aligned}
			\lim\limits_{x\rightarrow x_0,y\rightarrow 0\pm}\dfrac{\p_y h_0(x,y)}{\p_x h_0(x,y)}=	\lim\limits_{x\rightarrow x_0,y\rightarrow 0\pm}\arctan\dfrac{\beta x}{\alpha y}=\pi\mp\frac{\pi}{2} ,\,\,\,\text{for }\,\,\,x_0<0,
		\end{aligned}
	\end{equation*}
	and 
	\begin{equation*}
		\begin{aligned}
			\lim\limits_{x\rightarrow x_0,y\rightarrow 0\pm}\dfrac{\p_y h_0(x,y)}{\p_x h_0(x,y)}=	\lim\limits_{x\rightarrow x_0,y\rightarrow 0\pm}\arctan\dfrac{\beta x}{\alpha y}=
			\pi\mp	\frac{\pi}{2},\,\,\,\text{for }\,\,\,x_0>0.
		\end{aligned}
	\end{equation*}
	Then we denote the limit above as $ \theta_0 $, which is the deflection angle down from the positive $ x $-axis of $\textbf{\textit{f}}$ at the point $(x_0,0)$.
	Based on the above observation and the direction of the force near the stagnation point, we can divide Case 1 into (see Fig. 3)
	
	\textit{\textbf{Subcase 1.1. (Type 1 stagnation point)}  $x_0< 0$ and $\theta_0=\frac{3\pi}{2}$;}
	
	\textit{\textbf{Subcase 1.2. (Type 1 stagnation point)} $x_0> 0$ and $\theta_0=\frac{\pi}{2}$;}
	
	\textit{\textbf{Subcase 1.3. (Type 1 stagnation point)} $x_0< 0$ and $\theta_0=\frac{\pi}{2}$;}
	
	\textit{\textbf{Subcase 1.4. (Type 1 stagnation point)} $x_0> 0$ and $\theta_0=\frac{3\pi}{2}$. }
	\begin{figure}[h]
		\includegraphics[width=150mm]{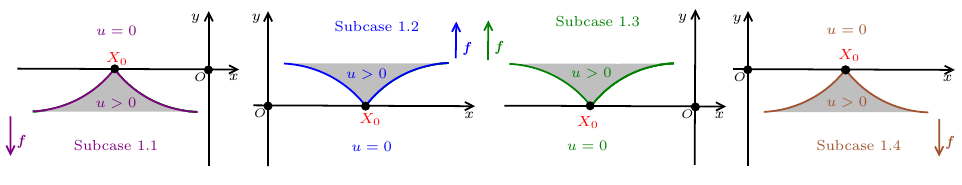}	
		\caption{Type 1 stagnation points }
	\end{figure}
	\begin{rem}
		For the special case (Subcase 1.1 and Subscase 1.4), $\alpha=0$ and $\beta=1$, 	 the free boundary problem \eqref{eq1} is nothing but the water wave problem under gravity which was discussed in \cite{VARVAR2011} by E. V$\breve{a}$rv$\breve{a}$ruc$\breve{a}$ and G. Weiss.
	\end{rem}
	
	It is easy to see that the direction of the force is $ 0 $ or $\pi$ in Case 2, and the stagnation point is $ (0,y_0) $. It follows that Case 2 is divided into the following subcases (see Fig. 4)
	
	\textit{\textbf{Subcase 2.1. (Type 2 stagnation point)} $y_0<0$ and $\theta_0=\pi$;}
	
	\textit{\textbf{Subcase 2.2. (Type 2 stagnation point)} $y_0> 0$ and $\theta_0=0$;}
	
	\textit{\textbf{Subcase 2.3. (Type 2 stagnation point)} $y_0< 0$ and $\theta_0=0$;}
	
	\textit{\textbf{Subcase 2.4. (Type 2 stagnation point)} $y_0> 0$ and $\theta_0=\pi$.}

	\begin{figure}[h]
		\includegraphics[width=150mm]{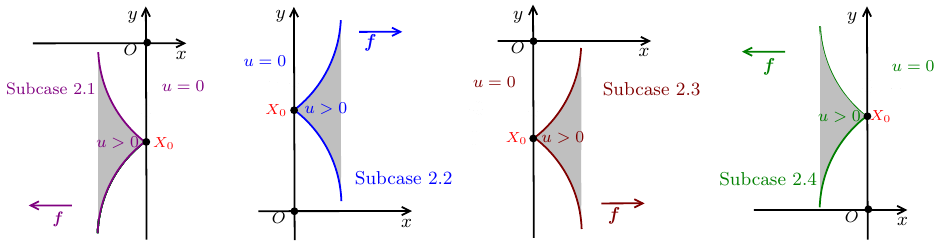}	
		\caption{Type 2 stagnation points }
	\end{figure}

	Roughly speaking, the gradient assumption in \eqref{eq1} gives the deflection angle of the external force at Type 1 and Type 2 stagnation points. However, for Case 3, the gradient condition 	$ \left |\nabla u \right |^2=h_0(x,y)$ implies that the external force $\textbf{\textit{f}}=\nabla h_0(x,y)$ vanishes at the origin, and the deflection angle of the external force is not well defined.
	
	In this paper, we mainly focus on the asymptotic singular profile of the wave near the stagnation point, and denote  $\theta^*$ the angle bisector of the asymptotic directions of the free boundary. Moreover, we conjecture that a reasonable conclusion is that the direction of the force coincides with the angle $ \theta^* $, that is to say $\theta_0=\theta^*$. This conjecture will be confirmed in Case 1 and Case 2. However, since the force vanishes at the stagnation point in Case 3, the direction of the force is missing now. Therefore, we have to give an additional assumption on the angle bisector $\theta^*$ of the asymptotic directions of  free boundary for Case 3.
	
	\textit{\textbf{Case 3. (Type 3 stagnation point)} $X_0=(0,0)$, we denote $\nu^*=(\cos\theta^*,\sin\theta^*)$. See Fig. 5.}
	
	\begin{figure}[h]
		\includegraphics[width=50mm]{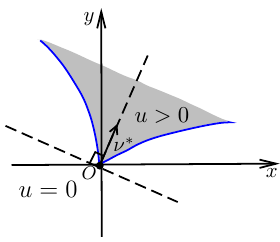}
		\caption{Type 3 stagnation point}
	\end{figure}
\begin{rem}
It should be noted that the first result on the regularity and singularity of the free boundary to the Alt-Caffarelli functional with a general degenerate function $Q(x)$ was established by S. McCurdy in the recent interesting work \cite{MCC1} and it was proved that the free boundary may be decomposed into a rectifiable set and a cusp set by using Naber–Valtorta technique.
However, the results strictly relied on the structure of the degenerate funciton $Q(x)$, which is the geometric structure of the free boundary to the submanifold. In this paper, especially for Type 3 stagnation point, the gradient function  $|x|^{\alpha}|y|^{\beta}$ does not fall into the structural assumption and we have to analyze the blow-up limit and the density carefully.
 
\end{rem}

	\subsection{Definitions and assumptions}
	\
	\newline\indent
	In this subsection, we give the definitions of weak solution, blow-up sequence, blow-up limit and weighted density.
	\begin{define}
		(Stagnation points). We call $$S^u:=\left\lbrace X=(x,y)|X\in\Omega\cap\p\{u>0\}\cap(\{x=0\}\cup\{y=0\})\right\rbrace $$
		as the set of stagnation points.
	\end{define}

	\begin{define}
		(Weak solution) A non-negative function $u\in W_{\operatorname{loc}}^{1,2}(\Omega)$ is defined as a weak solution of \eqref{eq1} in $\Omega$, if 
		
		(i) $u\in C^0(\Omega)\cap  C^2\left(\Omega\cap\{u>0\}\right)$, $u\geq0$ in $\Omega$, $u\equiv0$ in $\{X|(X-X_0)\cdot(\cos\theta_0,\sin\theta_0)\leq0\}$;
		
		(ii) the first variation with respect to domain variations of the functional 
		$$J(v) :=\int_{\Omega} \left(\left |\nabla v\right |^{2}+|x|^{\alpha}|y|^{\beta}\chi_{\{v>0\}}\right)dX$$    
		vanishes at $ v=u $, and $\chi_{\{v>0\}}$ is the characteristic function of a set  ${\{v>0\}}$. Equivalently, 
		\begin{equation}
			\label{eq23}
			\begin{aligned}
				0=&-\frac{d}{d\varepsilon}J\left(u\left( X+\varepsilon \phi(X)\right) \right)\Big|_{\varepsilon =0}\\
				=&\int_{\Omega}\Big[\left |\nabla u\right |^{2}\operatorname{div} \phi-2\nabla uD\phi \nabla u+ |x|^{\alpha}|y|^{\beta}\operatorname{div}\phi\chi_{\{u>0\}}\\
				&+\big( \alpha|x|^{\alpha-1}|y|^{\beta}\phi_1\operatorname{sgn}(x)
				+\beta|x|^{\alpha}|y|^{\beta-1}\phi_2\operatorname{sgn}(y)\big)\chi_{\{u>0\}}\Big]dX 
			\end{aligned}
		\end{equation}
		for any $ \phi=(\phi_1,\phi_2)\in C_{0}^{1}(\Omega;\mathbb{R}^{2}),$ we denote $dX=dxdy$ throughout this paper;
		
		(iii) the topological free boundary $(\p\{u>0\}\cap\Omega)\backslash S^u$ is locally a $C^{2,\gamma}$-surface for some $\gamma\in(0,1)$.
	\end{define}
	\begin{rem}
		For Case 3,   the force degenerates at the origin, which means that the force's direction cannot be determined by the given conditions at this point. Hence we use $\theta^*$ instead of $\theta_0$ in the definition of the weak solution.
	\end{rem}
	\begin{rem}
		Indeed, for the regularity of the topological free boundary, the fact (iii) in Definition 1.7 has been verified in the pioneer work \cite{Alt1981} by H. Alt and L. Caffarelli.
	\end{rem}
	
	We assume that the following Bernstein estimate holds close to the free boundary.
	\begin{assumptionA}
		In regards to the free boundary problem \eqref{eq1}, let $u$ satisfy
		\begin{equation}
			\label{eq28}
			|\nabla u|^2\leq C |x|^{\alpha}|y|^{\beta}
		\end{equation}
		in $B_{r_0}(X_0)$, for $X_0\in S^u$, and some $r_0>0$, $C$ is a positive constant. 
	\end{assumptionA}

	Next, we will provide the definitions of the blow-up sequence for three types of stagnation points respectively. It is noticeable that the gradient condition $|\nabla u|^2=|x|^{\alpha}|y|^{\beta}$ on the free boundary implies the behavior of the weak solution $u$ close to the stagnation point. In other words, $u$ goes like $r^{\frac{\beta+2}{2}}$ near 	Type 1 stagnation point $X_0=(x_0,0)$;  $u$ goes like $r^{\frac{\alpha+2}{2}}$ near 	Type 2 stagnation point $X_0=(0,y_0)$;  $u$ goes like $r^{\frac{\alpha+\beta+2}{2}}$ near 	Type 3 stagnation point $X_0=(0,0)$. For this reason, we need to give the following definitions of rescaled functions.
	\begin{define}
		For every $r>0$, we define the rescaled functions
		\begin{center}
			$u(X)=r^{\kappa_i}u(X_0+rX),\,\,\text{for}\,i=1,2,3,$
		\end{center}
		where $\kappa_1=-\frac{\beta+2}{2}$, $\kappa_2=-\frac{\alpha+2}{2}$ and $\kappa_3=-\frac{\alpha+\beta+2}{2}$ respectively for Type 1, 2, and 3 stagnation point. Let $\{r_m\}$ be a vanishing sequence of positive numbers as $m\rightarrow\infty$, we say that the sequence of functions $u_m(X)=r_m^{\kappa_i}u(X_0+r_mX)$ are blow-up sequences.
		
		In addition, for each $r>0$, if the sequence $\{u_m(X)\}$ is uniformly bounded in $ W^{1,2}(B_r)$, there exists a subsequence converging to $u_0(X)$ weakly in $ W^{1,2}(B_r)$ as $m\rightarrow\infty$. Such a function  $u_0(X)$ is called the blow-up limit of  $u_m(X)$ at the stagnation point $X_0$, which indicates the infinitesimal behavior of $u(X)$ near $X_0$.
	\end{define}
	
	The following definitions are dedicated to the monotonicity formula for the adjusted boundary functional introduced by G. Weiss in \cite{WG}. Precisely, for three types of stagnation points, $u$ goes like $r^{\frac{\beta+2}{2}}$, $r^{\frac{\alpha+2}{2}}$ and $r^{\frac{\alpha+\beta+2}{2}}$ respectively, accordingly we define the Weiss adjusted boundary energies $M_i(r)$ as follows
	\begin{equation}
		\label{eq66}
		\begin{aligned}
			M_i(r)=r^{2\kappa_i}\int_{B_{r}(X_0)}\left(\left |\nabla u\right |^{2}+|x|^{\alpha}|y|^{\beta}\chi_{\{u>0\}}\right)dX+\kappa_i r^{2\kappa_i-1}\int_{\p B_{r}(X_0)}u^{2}d\mathcal{H}^{1},\,\text{for}\,i=1,2,3.
		\end{aligned}
	\end{equation}
	
	\begin{rem}
		In the next section, we will prove that the limits $M_i(r_m)$ exist as $m\rightarrow \infty$ and denote the limits as $M_i(0+)$.
		Taking Subscase 1.1 , 2.1 and Case 3 as examples,
		\begin{equation*}
			M_1(0+)=(-x_0)^{\alpha}\lim\limits_{m \rightarrow\infty}\int_{B_1}(y_-)^{\beta}\chi_{\{u_m>0\}}dX=(-x_0)^{\alpha}\lim\limits_{m \rightarrow\infty}\dfrac{w_2\int_{B_1}(y_-)^{\beta}\chi_{\{u_m>0\}}dX}{|B_1|};
		\end{equation*}
		$$M_2(0+)=(-y_0)^{\beta}\lim\limits_{m    \rightarrow\infty}\int_{B_1}(x_-)^{\alpha}\chi_{\{u_m>0\}}dX=(-y_0)^{\beta}\lim\limits_{m    \rightarrow\infty}\dfrac{w_2\int_{B_1}(x_-)^{\alpha}\chi_{\{u_m>0\}}dX}{|B_1|};$$
		and 
		$$M_3(0+)=\lim\limits_{m    \rightarrow\infty}\int_{B_1}|x|^{\alpha}|y|^{\beta}\chi_{\{u_m>0\}}dX=\lim\limits_{m    \rightarrow\infty}\dfrac{w_2\int_{B_1}|x|^{\alpha}|y|^{\beta}\chi_{\{u_m>0\}}dX}{|B_1|},$$
		where $y_+=\operatorname{max}\{y,0\}$ and  $y_-=-\operatorname{min}\{y,0\}$, $w_2$ is the volume of the unit ball in two dimensions, which are equivalent to the weighted  density of the set $\{u>0\}$ at $X_0$ in Subscase 1.1 , 2.1 and Case 3 respectively. 
		
		The notion of weighted density was first introduced by E. V$\breve{a}$rv$\breve{a}$ruc$\breve{a}$  and G. Weiss in \cite{VARVAR2011}, providing valuable insights into the study of Stokes conjectures. This constitutes a brilliant study, and  as  such, we adopt their definition in our paper.
	\end{rem}
	
	Under the previous assumptions, we are only able to achieve exact forms of the blow-up limit and weighted densities. In order to obtain the asymptotic behavior of free boundary, the following assumption is required.
	\begin{assumptionB}
		For $X_0\in S^u$, let $\p\{u>0\}$ be a continuous injective curve $\sigma=(\sigma_1(t),\sigma_2(t)):I\rightarrow\mathbb{R}^2$ in a neighborhood of $X_0$, such that $\sigma(0)=X_0$, and $\sigma_1(t)-\sigma_1(0)\neq0$ for $(-s,s)\backslash\{0\}$, where $I$ is an interval in $ \mathbb{R}^2 $ containing the origin and $s $ is a positive constant.
	\end{assumptionB}
	
	\subsection{Main results}
	\
	\newline\indent
	In this  paper, we consider the asymptotic singularity profile of the free boundary to the Bernoulli-type free boundary problem \eqref{eq1}, with boundary gradient function 
	$$h_0(x,y)=|x|^{\alpha}|y|^{\beta},$$
	and give different singular profiles as follows, see Table 1 in Appendix.

	\begin{thm 1.1}
		(Type 1 stagnation point). Let $u$ be a weak solution of the free boundary problem \eqref{eq1} satisfying Assumption A and B. Then for $\alpha\geq0$ and $\beta\geq1$, at Type 1 stagnation point, the singular profile must be a $ \frac{2\pi}{\beta+2} $-degree corner wave profile, that means the wave has a symmetric corner of $ \frac{2\pi}{\beta+2} $ at the stagnation point. Moreover,
		
		(1) for Subcase 1.1, the blow-up limit is 
		\begin{center}
			$u_0(X)=u_0(r\cos\theta,r\sin\theta)=\frac{2(-x_0)^{\frac{\alpha}{2}}}{\beta+2}r^\frac{\beta+2}{2}\cos^{\frac{\beta}{2}}\left( \frac{\pi}{\beta+2}\right)  \cos\left( \frac{\beta+2}{2}\theta+\frac{\beta+2}{4}\pi\right) \chi_{\left\lbrace-\frac{\beta+4}{2(\beta+2)}\pi <\theta<-\frac{\beta}{2(\beta+2)}\pi\right\rbrace },$
		\end{center}
		and	the corresponding weighted density is 
		$$M_1(0+)=(-x_0)^{\alpha}\int_{B_1\bigcap\left\lbrace -\frac{\beta+4}{2(\beta+2)}\pi <\theta<-\frac{\beta}{2(\beta+2)}\pi\right\rbrace } (y_-)^{\beta}dX,$$
		and
		\begin{equation}
			\label{eq99}
			\lim\limits_{t\rightarrow0\pm}\frac{\sigma_2(t)}{\sigma_1(t)-x_0}=-\tan \frac{\left( \beta+2\pm2\right) \pi}{2(\beta+2)} \,\,\,\text{or}\lim\limits_{t\rightarrow0\pm}\frac{\sigma_2(t)}{\sigma_1(t)-x_0}=-\tan\frac{\left( \beta+2\mp2\right)\pi }{2(\beta+2)}.
		\end{equation}
		The corner wave profile can be seen in Fig. 6(a).
		
		(2) For Subcase 1.2, the blow-up limit is
		\begin{center}
			$u_0(X)=u_0(r\cos\theta,r\sin\theta)=\frac{2x_0^{\frac{\alpha}{2}}}{\beta+2}r^\frac{\beta+2}{2}\sin^{\frac{\beta}{2}}\left( \frac{\beta}{2(\beta+2)}\pi\right)  \cos\left( \frac{\beta+2}{2}\theta-\frac{\beta+2}{4}\pi\right) \chi_{\left\lbrace\frac{\beta}{2(\beta+2)}\pi<\theta<\frac{\beta+4}{2(\beta+2)}\pi\right\rbrace},$	
		\end{center}
		and the corresponding weighted density is 
		$$M_1(0+)=x_0^{\alpha}\int_{B_1\bigcap\left\lbrace \frac{\beta}{2(\beta+2)}\pi <\theta<\frac{\beta+4}{2(\beta+2)}\pi\right\rbrace } (y_+)^{\beta}dX,$$
		and
		$$\lim\limits_{t\rightarrow0\pm}\frac{\sigma_2(t)}{\sigma_1(t)-x_0}=\tan \frac{\left( \beta+2\mp2\right)\pi}{2(\beta+2)} \,\,\,\text{or}\lim\limits_{t\rightarrow0\pm}\frac{\sigma_2(t)}{\sigma_1(t)-x_0}=\tan\frac{\left( \beta+2\pm2\right)\pi }{2(\beta+2)}.$$
		The corner wave profile can be seen in Fig. 6(b).
		\begin{figure}[!h]
			\includegraphics[width=130mm]{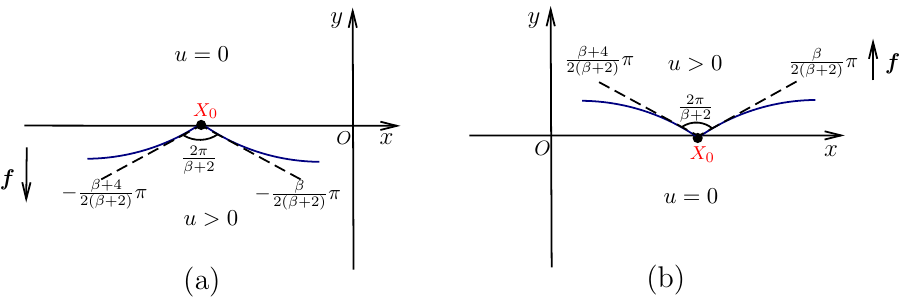}	
			\caption{Singular profile for Subscase 1.1 and Subcase 1.2}
		\end{figure}
		
		(3) For Subcase 1.3, the blow-up limit is  
		\begin{center}
			$u_0(X)=u_0(r\cos\theta,r\sin\theta)=\frac{2(-x_0)^{\frac{\alpha}{2}}}{\beta+2}r^\frac{\beta+2}{2}\sin^{\frac{\beta}{2}}\left( \frac{\beta}{2(\beta+2)}\pi\right)  \cos\left( \frac{\beta+2}{2}\theta-\frac{\beta+2}{4}\pi\right) \chi_{\left\lbrace\frac{\beta}{2(\beta+2)}\pi<\theta< \frac{\beta+4}{2(\beta+2)}\pi\right\rbrace} ,$
		\end{center}
		and the corresponding weighted density is
		$$M_1(0+)=(-x_0)^{\alpha}\int_{B_1\bigcap\left\lbrace \frac{\beta}{2(\beta+2)}\pi <\theta<\frac{\beta+4}{2(\beta+2)}\pi\right\rbrace } (y_+)^{\beta}dX,$$
		and
		$$\lim\limits_{t\rightarrow0\pm}\frac{\sigma_2(t)}{\sigma_1(t)-x_0}=\tan \frac{\left( \beta+2\mp2\right)\pi}{2(\beta+2)} \,\,\,\text{or}\lim\limits_{t\rightarrow0\pm}\frac{\sigma_2(t)}{\sigma_1(t)-x_0}=\tan\frac{\left( \beta+2\pm2\right)\pi }{2(\beta+2)}.$$
		The corner wave profile can be seen in Fig. 7(a).
		
		(4) For Subcase 1.4, the blow-up limit is 
		\begin{center}
			$u_0(X)=u_0(r\cos\theta,r\sin\theta)=\frac{2x_0^{\frac{\alpha}{2}}}{\beta+2}r^\frac{\beta+2}{2}\cos^{\frac{\beta}{2}}\left( \frac{\pi}{\beta+2}\right)  \cos\left( \frac{\beta+2}{2}\theta+\frac{\beta+2}{4}\pi\right) \chi_{\left\lbrace-\frac{\beta+4}{2(\beta+2)}\pi <\theta<-\frac{\beta}{2(\beta+2)}\pi\right\rbrace },$	
		\end{center}
		and the corresponding weighted density is
		$$M_1(0+)=x_0^{\alpha}\int_{B_1\bigcap\left\lbrace -\frac{\beta+4}{2(\beta+2)}\pi <\theta<-\frac{\beta}{2(\beta+2)}\pi\right\rbrace } (y_-)^{\beta}dX,$$
		and
		$$\lim\limits_{t\rightarrow0\pm}\frac{\sigma_2(t)}{\sigma_1(t)-x_0}=-\tan \frac{\left( \beta+2\pm2\right) \pi}{2(\beta+2)} \,\,\,\text{or}\lim\limits_{t\rightarrow0\pm}\frac{\sigma_2(t)}{\sigma_1(t)-x_0}=-\tan\frac{\left( \beta+2\mp2\right)\pi }{2(\beta+2)}.$$
		The corner wave profile can be seen in Fig. 7(b).
		\begin{figure}[!h]
			\includegraphics[width=130mm]{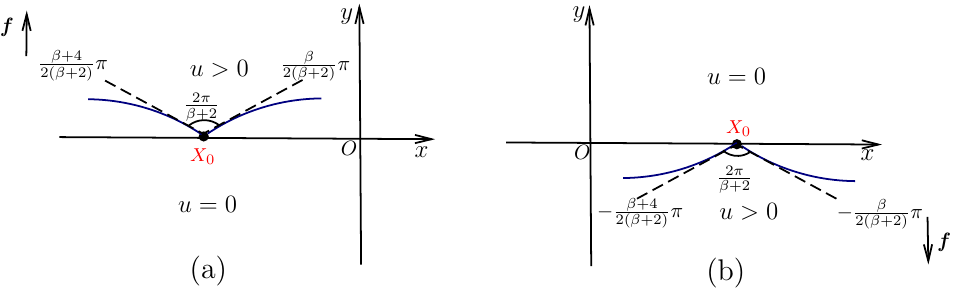}	
			\caption{Singular profile for  Subscase 1.3 and Subcase 1.4}
		\end{figure}
	\end{thm 1.1}
	\begin{rem}
		As mentioned in Theorem 1.1, the value of $\beta$ is closely related to the angle of the asymptotic directions of the free boundary. The angle of the asymptotic directions of the free boundary is $\frac{2\pi}{\beta+2}\in(0,\frac{2\pi}{3}]$. The larger $\beta$ is, the faster the solution decays, thus the angle becomes smaller.
	\end{rem}
	\begin{rem}
		The reason there are two limits for the slope in \eqref{eq99} is that we do not require $t>0$ to indicate $X$ converging to $X_0$ from the positive direction, and at the same time we do not require $t<0$ to indicate $X$ converging to $X_0$ from the negative direction. But the geometrical meaning of them is the same and both represent two asymptotic directions of the free boundary which are unique.
	\end{rem}
	\begin{thm 1.2}
		(Type 2 stagnation point). Let $u$ be a weak solution of the free boundary problem \eqref{eq1}  satisfying Assumption A and B. Then when $\alpha\geq1$ and $\beta\geq0$, at Type 2 stagnation point, the singular profile must be a $ \frac{2\pi}{\alpha+2} $-degree corner wave profile, that means the wave has a symmetric corner of $ \frac{2\pi}{\alpha+2} $ at the stagnation point. Moreover,
		
		(1) for Subcase 2.1, the blow-up limit is 
		\begin{center}
			$u_0(X)=u_0(r\cos\theta,r\sin\theta)=\frac{2(-y_0)^{\frac{\beta}{2}}}{\alpha+2}r^\frac{\alpha+2}{2}\cos^{\frac{\alpha}{2}}\left( \frac{\alpha+1}{\alpha+2}\pi\right) \cos\left( \frac{\alpha+2}{2}\theta+\frac{2-\alpha}{2}\pi\right) \chi_{\left\lbrace \frac{\alpha+1}{\alpha+2}\pi<\theta<\frac{\alpha+3}{\alpha+2}\pi\right\rbrace },$	
		\end{center}
		and the corresponding weighted density is 
		$$M_2(0+)=(-y_0)^{\beta}\int_{B_1\bigcap\left\lbrace\frac{\alpha+1}{\alpha+2}\pi<\theta<\frac{\alpha+3}{\alpha+2}\pi\right\rbrace } (x_-)^{\alpha}dX,$$ 
		and
		$$\lim\limits_{t\rightarrow0\pm}\frac{\sigma_2(t)-y_0}{\sigma_1(t)}=\tan \frac{\left( \alpha+2\pm1\right) \pi}{\alpha+2} \,\,\text{or}\lim\limits_{t\rightarrow0\pm }\frac{\sigma_2(t)-y_0}{\sigma_1(t)}=\tan\frac{\left( \alpha+2\mp1\right) \pi}{\alpha+2}.$$
		The corner wave profile can be seen in Fig. 8(a).
		
		(2) For Subcase 2.2, the blow-up limit is
		\begin{center}
			$u_0(X)=u_0(r\cos\theta,r\sin\theta)=\frac{2y_0^{\frac{\beta}{2}}}{\alpha+2}r^\frac{\alpha+2}{2}\cos^{\frac{\alpha}{2}}\left( \frac{\pi}{\alpha+2}\right) \cos\left( \frac{\alpha+2}{2}\theta\right) \chi_{\left\lbrace -\frac{\pi}{\alpha+2}<\theta<\frac{\pi}{\alpha+2}\right\rbrace },$
		\end{center}
		and the corresponding weighted density is
		$$M_2(0+)=y_0^{\beta}\int_{B_1\bigcap\left\lbrace-\frac{\pi}{\alpha+2}<\theta<\frac{\pi}{\alpha+2}\right\rbrace } (x_+)^{\alpha}dX.,$$ 
		and
		$$\lim\limits_{t\rightarrow0\pm}\frac{\sigma_2(t)-y_0}{\sigma_1(t)}=\pm\tan \frac{ \pi}{\alpha+2} \,\,\text{or}\lim\limits_{t\rightarrow0\pm}\frac{\sigma_2(t)-y_0}{\sigma_1(t)}=\mp\tan\frac{ \pi}{\alpha+2}.$$
		The corner wave profile can be seen in Fig. 8(b).
		\begin{figure}[h]
			\includegraphics[width=130mm]{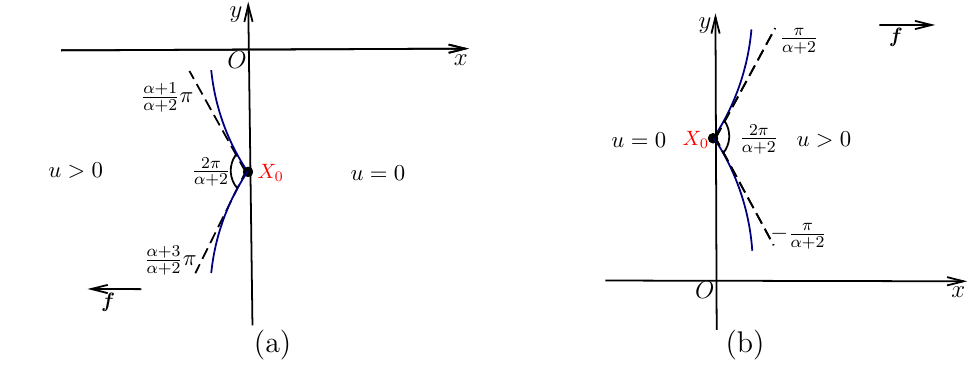}	
			\caption{Singular profile for Subscase 2.1 and Subcase 2.2}
		\end{figure}
		
		(3) For Subcase 2.3, the blow-up limit is
		\begin{center}
			$u_0(X)=u_0(r\cos\theta,r\sin\theta)=\frac{2(-y_0)^{\frac{\beta}{2}}}{\alpha+2}r^\frac{\alpha+2}{2}\cos^{\frac{\alpha}{2}}\left( \frac{\pi}{\alpha+2}\right) \cos\left( \frac{\alpha+2}{2}\theta\right) \chi_{\left\lbrace -\frac{\pi}{\alpha+2}<\theta<\frac{\pi}{\alpha+2}\right\rbrace },	$
		\end{center}
		and the corresponding weighted density is
		$$M_2(0+)=(-y_0)^{\beta}\int_{B_1\bigcap\left\lbrace-\frac{\pi}{\alpha+2}<\theta<\frac{\pi}{\alpha+2}\right\rbrace } (x_+)^{\alpha}dX,$$ 
		and
		$$\lim\limits_{t\rightarrow0\pm}\frac{\sigma_2(t)-y_0}{\sigma_1(t)}=\pm\tan \frac{ \pi}{\alpha+2} \,\,\text{or}\lim\limits_{t\rightarrow0\pm}\frac{\sigma_2(t)-y_0}{\sigma_1(t)}=\mp\tan\frac{ \pi}{\alpha+2}.$$
		The corner wave profile can be seen in Fig. 9(a).
		
		(4) For Subcase 2.4, the blow-up limit is 
		\begin{center}
			$u_0(X)=u_0(r\cos\theta,r\sin\theta)=\frac{2y_0^{\frac{\beta}{2}}}{\alpha+2}r^\frac{\alpha+2}{2}\cos^{\frac{\alpha}{2}}\left( \frac{\alpha+1}{\alpha+2}\pi\right) \cos\left( \frac{\alpha+2}{2}\theta+\frac{2-\alpha}{2}\pi\right) \chi_{\left\lbrace \frac{\alpha+1}{\alpha+2}\pi<\theta<\frac{\alpha+3}{\alpha+2}\pi\right\rbrace },$
		\end{center}
		and the corresponding weighted density is
		$$M_2(0+)=y_0^{\beta}\int_{B_1\bigcap\left\lbrace\frac{\alpha+1}{\alpha+2}\pi<\theta<\frac{\alpha+3}{\alpha+2}\pi\right\rbrace } (x_-)^{\alpha}dX,$$
		and
		$$\lim\limits_{t\rightarrow0\pm}\frac{\sigma_2(t)-y_0}{\sigma_1(t)}=\tan \frac{\left( \alpha+2\pm1\right) \pi}{\alpha+2} \,\,\text{or}\lim\limits_{t\rightarrow0\pm}\frac{\sigma_2(t)-y_0}{\sigma_1(t)}=\tan\frac{\left( \alpha+2\mp1\right) \pi}{\alpha+2}.$$
		The  corner wave profile can be seen in Fig. 9(b).	
		\begin{figure}[h]
			\includegraphics[width=120mm]{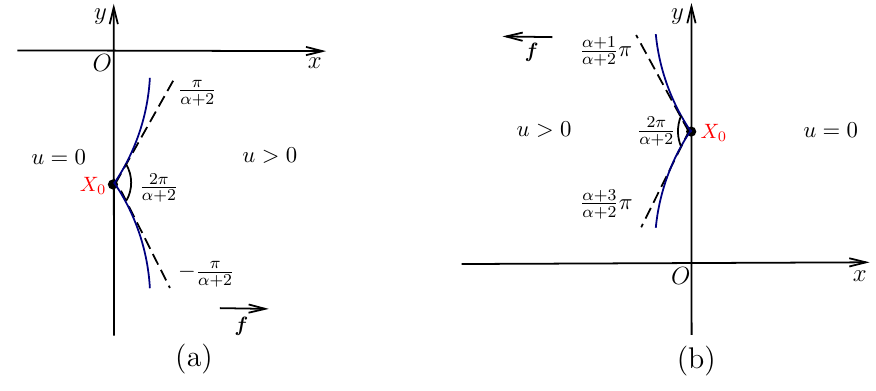}	
			\caption{Singular profile  for Subscase 2.3 and Subscase 2.4}
		\end{figure}		
	\end{thm 1.2}
	\begin{thm 1.3}
		(Type 3 stagnation point).
		Let $u$ be a weak solution of the free boundary problem \eqref{eq1}	satisfying Assumption A and B, then for $\alpha\geq1$ and $\beta\geq1$, at Type 3 stagnation point, the singular profile must be a $ \frac{2\pi}{\alpha+\beta+2} $-degree corner wave profile, that means the wave has a  corner of $ \frac{2\pi}{\alpha+\beta+2} $ at the stagnation point. 
		
		More precisely, at each Type 3 stagnation point, there exist $\theta_1$ and $\theta_2$, such that $\theta_2-\theta_1=\frac{2\pi}{\alpha+\beta+2}$ and satisfy the boundary condition of blow-up limit.	 The blow-up limit is as follows
		\begin{equation*}
			\begin{aligned}
				u_0(X)=&u_0(r\cos\theta,\sin\theta)\\
				=&\dfrac{2|\cos\theta_1|^{\frac{\alpha}{2}}|\sin\theta_1|^{\frac{\beta}{2}}}{\alpha+\beta+2}r^\frac{\alpha+\beta+2}{2}  \cos\left( \frac{\alpha+\beta+2}{2}\theta+\arcsin\left|\cos\left( \frac{\alpha+\beta+2}{2}\theta_1\right)  \right| \right) \chi_{\left\lbrace\theta_1 <\theta<\theta_2\right\rbrace }.
			\end{aligned}
		\end{equation*}
		And the  corresponding density is 	
		$$M_3(0+)=\int_{B_1\cap\left\lbrace \theta_1 <\theta<\theta_2\right\rbrace } |x|^{\alpha}|y|^{\beta}dX,$$
		and 
		$$\lim\limits_{t\rightarrow0\pm}\frac{\sigma_2(t)}{\sigma_1(t)}=\tan\left( \dfrac{\theta_1+\theta_2}{2}\pm\dfrac{\theta_1-\theta_2}{2}\right)  \,\,\text{or}\lim\limits_{t\rightarrow0\pm}\frac{\sigma_2(t)}{\sigma_1(t)}=\tan\left( \dfrac{\theta_1+\theta_2}{2}\pm\dfrac{\theta_2-\theta_1}{2}\right) .$$ 
		The corner wave profile can be seen in Fig. 10. 
		\begin{figure}[h]
			\includegraphics[width=50mm]{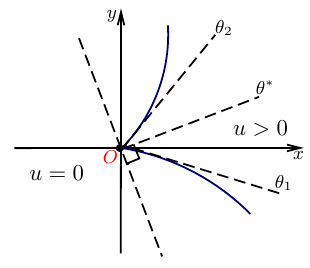}	
			\caption{Singular profile for Type 3 stagnation point }
		\end{figure}
	\end{thm 1.3}
	\begin{rem}
		In Case 3, the external force \textbf{\textit{f}} vanishes at the origin, and then we can not define the direction of the force at the stagnation point. However, due to introducing a potential  function, we can formulate that it solves a Tschebysheff equation. And then it gives the homogeneity of the blow-up limit and the deflection angle $\theta_2-\theta_1$ of the free boundary near the origin.
	\end{rem}
	The present paper is built up as follows. In Section 2, based on Weiss' monotonicity formula, we investigate three different possible singular profiles close to the Type 1, Type 2 and Type 3 stagnation points respectively, that must be corner wave, cusp or flat profile. By analyzing the local cusp geometry at stagnation points, it is proven that cusp profile does not exist in Section 3, while flat profile is eliminated using the idea of perturbation in frequency formula in Section 4. Section 5 contains asymptotic  behavior of the free boundary approaching stagnation points.

	\section{Monotonicity formula and blow-up limits}
	In this section, we will investigate the possible blow-up limits and weighted densities near three types of stagnation points respectively.
	
	\subsection{Case 1. Type 1 stagnation point}	
	\
	\newline\indent
	For Subcase 1.1, the gradient function on the free boundary is 
	\begin{equation}
		\label{eq42}
		h_0(x,y)=(-x)^{\alpha}(-y)^{\beta},\,\,\alpha\geq0,\beta\geq1.
	\end{equation}
	
	We work out the explicit blow-up limits by means of the monotonicity formula. Monotonicity formula is important for the study of this paper, which is used to  determine that the blow-up limit is a homogeneous function. This concept has been confirmed in \cite{VARVAR2011}.
	The difference between the monotonicity formula for Case 1 and 2 and which in  \cite{VARVAR2011} is that there will be an extra perturbation term. The previous monotonicity formula is therefore not directly applicable. The derivative of the  Weiss adjusted boundary energy	we obtained  is  a nonnegative term plus a perturbation term.
	Let us first introduce $\delta:=\min\{X_0,\rm{dist}$$(X_0,\p\Omega)/2\}$ throughout the paper.
	\begin{lemma}
		(Monotonicity formula). Let $u$ be a weak solution of the free boundary problem \eqref{eq1} and for any $r\in (0,\delta)$, recalling \eqref{eq66}, 
		\begin{equation*}
			\begin{aligned}
				M_1(r)=r^{-2-\beta}\int_{B_{r}(X_0)}\left(\left |\nabla u\right |^{2}+(-x)^{\alpha}(-y)^{\beta}\chi_{\{u>0\}}\right)dX-\dfrac{\beta+2}{2}r^{-3-\beta}\int_{\p B_{r}(X_0)}u^{2}d\mathcal{H}^{1}.
			\end{aligned}
		\end{equation*}
		Then, the function $M_1(r)$ is differentiable almost everywhere on $(0,\delta)$ and for almost every $r\in (0,\delta)$, we have 
		\begin{equation}
			\label{eq2}
			\dfrac{dM_1(r)}{dr}=
			2r^{-2-\beta}\int_{\p B_{r}(X_0)}\left(\nabla u\cdot\nu-\frac{\beta+2}{2}\frac{u}{r}\right)^2d\mathcal{H}^{1}+h_1(r),
		\end{equation}
		where $\nu$ is the unit outer normal vector to the boundary and
		\begin{equation}
			\label{eq98}
			h_1(r)=r^{-3-\beta}\int_{B_{r}(X_0)}\alpha(-x)^{\alpha-1}(x_0-x)(-y)^{\beta}\chi_{\{u>0\}}dX.
		\end{equation}
	\end{lemma}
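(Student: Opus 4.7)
The plan is to adapt Weiss' monotonicity formula to the inhomogeneous gradient function $h_0(x,y)=(-x)^\alpha(-y)^\beta$, whose failure to be translation-invariant about $X_0=(x_0,0)$ is precisely what produces the extra term $h_1(r)$. Write $\kappa:=(\beta+2)/2$ so that
\[
M_1(r)=r^{-2\kappa}\,I(r)-\kappa\,r^{-2\kappa-1}\,J(r),
\]
with $I(r):=\int_{B_r(X_0)}(|\nabla u|^2+h_0\chi_{\{u>0\}})\,dX$ and $J(r):=\int_{\partial B_r(X_0)}u^2\,d\mathcal{H}^1$. Differentiating in $r$ using the coarea formula $I'(r)=\int_{\partial B_r(X_0)}(|\nabla u|^2+h_0\chi_{\{u>0\}})\,d\mathcal{H}^1$ and the rescaling identity $J'(r)=J(r)/r+2\int_{\partial B_r(X_0)}u(\nabla u\cdot\nu)\,d\mathcal{H}^1$ gives a preliminary formula for $dM_1/dr$ in which the $u^2$ and $u(\nabla u\cdot\nu)$ pieces of the target square already appear, while the bulk term $-2\kappa r^{-2\kappa-1}I(r)$ and a boundary integral of $|\nabla u|^2$ still need to be processed.

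Two further identities supply what is missing. The first is a Rellich-type identity obtained by multiplying $\Delta u=0$ (valid on $\{u>0\}$) by $u$ and integrating by parts over $B_r(X_0)\cap\{u>0\}$: since $u\equiv 0$ on the free-boundary portion of the boundary, one gets
\[
\int_{B_r(X_0)}|\nabla u|^2\,dX=\int_{\partial B_r(X_0)}u(\nabla u\cdot\nu)\,d\mathcal{H}^1
\]
for a.e.\ $r\in(0,\delta)$. The second is a Pohozaev-type identity produced by inserting the test field $\phi_\varepsilon(X)=(X-X_0)\,\eta_\varepsilon(|X-X_0|)$ into the domain-variation identity \eqref{eq23} and sending $\eta_\varepsilon\to\chi_{[0,r]}$. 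Using $\operatorname{div}\phi_\varepsilon=2\eta_\varepsilon+s\eta_\varepsilon'(s)$ and $\nabla u\cdot D\phi_\varepsilon\nabla u=\eta_\varepsilon|\nabla u|^2+(\eta_\varepsilon'(s)/s)(\nabla u\cdot(X-X_0))^2$, polar coordinates about $X_0$ localise the $\eta_\varepsilon'$-contributions on $\partial B_r(X_0)$ with factor $-r$, yielding
\[
r\!\int_{\partial B_r(X_0)}\!|\nabla u|^2-2r\!\int_{\partial B_r(X_0)}\!(\nabla u\cdot\nu)^2=2\!\int_{B_r(X_0)}\!h_0\chi_{\{u>0\}}-r\!\int_{\partial B_r(X_0)}\!h_0\chi_{\{u>0\}}+\!\int_{B_r(X_0)}\!\nabla h_0\cdot(X-X_0)\chi_{\{u>0\}}.
\]

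Substituting the Pohozaev identity converts the $|\nabla u|^2$ boundary integral into $2r^{-2\kappa}\int_{\partial B_r}(\nabla u\cdot\nu)^2$ plus bulk pieces (the $h_0$-boundary terms cancel), while substituting the Rellich identity rewrites $-2\kappa r^{-2\kappa-1}I(r)$ as a $u(\nabla u\cdot\nu)$ boundary piece plus an additional bulk $h_0$ contribution. The three surviving boundary integrals assemble exactly into the perfect square $2r^{-2\kappa}\int_{\partial B_r(X_0)}(\nabla u\cdot\nu-\kappa u/r)^2\,d\mathcal{H}^1$, and what remains is $r^{-2\kappa-1}\int_{B_r(X_0)}[(2-2\kappa)h_0+\nabla h_0\cdot(X-X_0)]\chi_{\{u>0\}}\,dX$. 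A direct computation with $h_0=(-x)^\alpha(-y)^\beta$ and $X_0=(x_0,0)$ gives
\[
\nabla h_0\cdot(X-X_0)-\beta h_0=\alpha(-x)^{\alpha-1}(x_0-x)(-y)^\beta,
\]
because the factor $(-y)^\beta$ is Euler-homogeneous of degree $\beta$ about $X_0$ (its contribution cancels against $-\beta h_0$), while the non-homogeneous factor $(-x)^\alpha$ produces exactly the claimed residual. Together with $2-2\kappa=-\beta$, this residual is precisely $h_1(r)$.

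The main technical obstacle is the $\varepsilon\to 0$ limit in the domain-variation identity, since $\eta_\varepsilon'$ concentrates on $\partial B_r(X_0)$ and one needs well-defined boundary traces of $u$ and $\nabla u$ there. This is handled by restricting to the full-measure set of radii $r\in(0,\delta)$ for which $\mathcal{H}^1(\partial B_r(X_0)\cap S^u)=0$ and $u\in W^{1,2}(\partial B_r(X_0))$; existence of such a set of full measure follows from the slicing/coarea formula applied to $u\in W^{1,2}_{\operatorname{loc}}$ together with the local bound in Assumption~A. The identity \eqref{eq2} then holds a.e.\ on $(0,\delta)$, as claimed.
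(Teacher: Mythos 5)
Your proposal is correct and follows essentially the same route as the paper: the same decomposition $M_1=I_1-\tfrac{\beta+2}{2}J_1$, the scaling computation of $J_1'$, and the Rellich/Pohozaev (domain-variation with a radial cutoff) computation of $I_1'$ producing \eqref{eq45} with the residual $h_1(r)$, after which the boundary terms assemble into the perfect square. The paper simply defers that computation to Theorem 3.5 of \cite{VARVAR2011}, whereas you carry it out explicitly, and your identification $\nabla h_0\cdot(X-X_0)-\beta h_0=\alpha(-x)^{\alpha-1}(x_0-x)(-y)^{\beta}$ correctly yields \eqref{eq98}.
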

	\begin{proof}
		For convenience,  $M_1(r)$ can be written as $M_1(r)=I_1(r)-\frac{\beta+2}{2}J_1(r)$, where
		\begin{equation*}
			\begin{aligned}
				I_1(r)=&r^{-2-\beta}\int_{B_{r}(X_0)}\left(\left |\nabla u\right |^{2}+(-x)^{\alpha}(-y)^{\beta}\chi_{\{u>0\}}\right)dX,
			\end{aligned}
		\end{equation*}
		and
		\begin{equation*}
			\begin{aligned}
				J_1(r)=&r^{-3-\beta}\int_{\p B_{r}(X_0)}u^{2}d\mathcal{H}^{1}.
			\end{aligned}
		\end{equation*}
		Differentiating $J_1(r)$ with respect to $r$ gives
		\begin{equation*}
			\begin{aligned}
				\dfrac{dJ_1(r)}{dr}=(-3-\beta)r^{-4-\beta}\int_{\p B_{r}(X_0)}u^{2}d\mathcal{H}^{1}+r^{-3-\beta}\dfrac{d}{dr}\left(\int_{\p B_{r}(X_0)}u^{2}d\mathcal{H}^{1}\right),
			\end{aligned}
		\end{equation*}
		substituting the second term on the right side of the last equation by scaling properties, $ J'_1(r)$ can be derived as follows
		\begin{equation}
			\label{eq76}
			\begin{aligned}
				\frac{dJ_1(r)}{dr}=-(\beta+2)r^{-4-\beta}\int_{\p B_{r}(X_0)}u^{2}d\mathcal{H}^{1}+2r^{-3-\beta}\int_{\p B_{r}(X_0)}u\nabla u\cdot \nu d\mathcal{H}^{1}.	\end{aligned}
		\end{equation}
		A similar computation as Theorem 3.5 in \cite{VARVAR2011} gives rise to 
		\begin{equation}
			\label{eq45}
			\frac{dI_1(r)}{dr}
			=2r^{-2-\beta}\int_{\p B_{r}(X_0)}(\nabla u\cdot \nu)^2d\mathcal{H}^{1}-(2+\beta)r^{-3-\beta}\int_{\p B_{r}(X_0)}u\nabla u\cdot\nu d\mathcal{H}^1+h_1(r).
		\end{equation}
		Together with \eqref{eq76}, the proof is concluded.
	\end{proof}
	\begin{rem} It is important to recall that \eqref{eq28} implies that for any small $r<r_0$,
		\begin{equation}
			\label{eq69}
			u(X)\leq C_1 (-x)^{\frac{\alpha}{2}}(-y)^{\frac{\beta}{2}+1}
		\end{equation}
		holds	locally in $ B_r(X_0) $ for a positive constant $C_1$.
	\end{rem}
	In view of \eqref{eq69}, it is easy to check that $u_m\in W^{1,2}(B_r(X_0))$, hence there exists a weakly convergent subsequence of $u_m$ in $W^{1,2}(B_r(X_0))$.
	\begin{lemma} Let $u$ be a weak solution of the free boundary problem \eqref{eq1} satisfying \eqref{eq69} and                                                                               $r_m\rightarrow0+$ as $m\rightarrow\infty$,  then the limit
		$$\lim\limits_{m\rightarrow\infty}M_1(r_m)$$
		exists and is finite.
	\end{lemma}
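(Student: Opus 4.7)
The plan is to bootstrap Lemma 2.1 by recognizing $M_1$ as a perturbation of a monotone quantity. From \eqref{eq2} the derivative of $M_1$ is the sum of the nonnegative squared-gradient term and the remainder $h_1(r)$, so if I can show $h_1 \in L^1(0,\delta)$, then
\[
\widetilde{M}_1(r) := M_1(r) - \int_0^{r} h_1(s)\,ds
\]
is nondecreasing on $(0,\delta)$, and a uniform two-sided bound on $M_1$ will force the limit to exist.

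First I would control $h_1$. Condition (i) of the definition of weak solution forces $u\equiv 0$ on $\{y\ge 0\}$ in Subcase~1.1 (since $\theta_0=3\pi/2$), so $\{u>0\}\cap B_r(X_0)\subset\{y<0\}$. Since $X_0=(x_0,0)$ with $x_0<0$, for $r<\delta$ the factor $(-x)^{\alpha-1}$ is bounded on $B_r(X_0)$, while $|x_0-x|\le r$ and $\int_{B_r(X_0)\cap\{y<0\}}(-y)^{\beta}\,dX\le C r^{\beta+2}$ by a direct polar-coordinate computation. Plugging this into \eqref{eq98} gives $|h_1(r)|\le C r$, so $h_1\in L^\infty_{\mathrm{loc}}\subset L^1(0,\delta)$ and $\int_0^{r}h_1(s)\,ds = O(r) \to 0$.

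Next I would show $M_1(r)$ is bounded on $(0,\delta)$. Using Assumption~A in the form $|\nabla u|^2\le C(-x)^{\alpha}(-y)^{\beta}$, the volume integral defining $I_1(r)$ is bounded by $C\int_{B_r(X_0)\cap\{y<0\}}(-y)^{\beta}\,dX\le Cr^{\beta+2}$, so $I_1(r)\le C$. For the boundary term, the pointwise bound \eqref{eq69} gives $u\le Cr^{\beta/2+1}$ on $\partial B_r(X_0)$, hence $J_1(r)\le Cr^{-3-\beta}\cdot r^{\beta+2}\cdot r = C$. Combining, $|M_1(r)|\le C$ uniformly on $(0,\delta)$.

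Now I conclude: the auxiliary function $\widetilde{M}_1$ is nondecreasing (by \eqref{eq2} and integration) and bounded below, since $M_1$ is uniformly bounded and $\int_0^{r}h_1\,ds$ is uniformly bounded. Therefore $\lim_{r\to 0+}\widetilde{M}_1(r)$ exists in $\mathbb{R}$, and because $\int_0^{r}h_1(s)\,ds \to 0$, the original limit $\lim_{r\to 0+}M_1(r)$ exists and is finite; in particular $\lim_{m\to\infty}M_1(r_m)$ exists for any vanishing sequence $r_m$. The only subtle point is the bound on $h_1$: the nonintegrable scale factor $r^{-3-\beta}$ is exactly compensated by the extra $r$ coming from $|x_0-x|$ on $B_r(X_0)$ together with the $r^{\beta+2}$ gain from integrating $(-y)^{\beta}$, and without exploiting that $\{u>0\}$ lies in $\{y<0\}$ one would only get $|y|^{\beta}\le r^\beta$ and a worse estimate.
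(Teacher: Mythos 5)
Your argument is essentially the paper's own proof, fleshed out: the paper likewise notes that $h_1\in L^1(0,\delta)$, that $M_1(r)-\int_0^r h_1(s)\,ds$ is nondecreasing by the monotonicity formula \eqref{eq2}, and that \eqref{eq69} together with $u(X_0)=0$ controls the boundary term so the limit is finite (the paper cites Spolaor for the existence of the limit of the monotone part, where you use the elementary monotone-and-bounded argument). One harmless slip: your pointwise bound on $h_1$ should read $|h_1(r)|\le C$ rather than $Cr$, since the exponents $-3-\beta+1+(\beta+2)$ sum to $0$; boundedness already yields integrability and $\int_0^r h_1(s)\,ds=O(r)\to 0$, so nothing downstream is affected.
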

	
	\begin{proof}
		Since $h_1(r)$ is integrable with respect to $r$ when $r\in(0,\delta)$, the function $\int_{0}^{r}h_1(s)ds$ is well-defined. By virtue of the monotonicity formula, we have
		$$\dfrac{d}{dr}\left( M_1(r)-\int_{0}^{r}h_1(s)ds\right) \geq0.$$
		As stated in  Theorem 1.1 of \cite{Spo},  $\lim\limits_{r\rightarrow0+}\left( M_1(r)-\int_{0}^{r}h_1(s)ds\right) $ exists. On account of \eqref{eq69} and $u(X_0)=0$, one has the existence of  $M_1(0+)$.
	\end{proof}
	
	With the help of the preceding two lemmas, we now use the Weiss' monotonicity formula to prove that the homogeneity of blow-up limit $u_0$, the higher compactness of blow-up sequence $u_m$ and the specific expression of $M_1(0+)$,  the results are given in Lemma 2.4.
	\begin{lemma}
		Let $u$ be a weak solution of the free boundary problem \eqref{eq1} with \eqref{eq69}, and $r_m $ be a vanishing sequence of positive real numbers such that the blow-up sequence $u_m(X)=r_m^{-\frac{\beta+2}{2}}u(X_0+r_mX)$ converges weakly in $W_{\text{loc}}^{1,2}(\mathbb{R}^2)$ to a blow-up limit $u_0$. Then 
		
		(i) $u_0(X)$ is a homogeneous function of $\frac{\beta+2}{2}$ degree;
		
		(ii) there is a subsequence $u_m$ which converges strongly to $u_0$ in $W_{\text{loc}}^{1,2}(\mathbb{R}^2)$ as $m\rightarrow\infty$;
		
		(iii) the weighted density is 
		\begin{equation}
			\label{eq43}
			M_1(0+)=(-x_0)^{\alpha}\lim\limits_{m     \rightarrow\infty}\int_{B_1}(y_-)^{\beta}\chi_{\{u_m>0\}}dX.
		\end{equation}
	\end{lemma}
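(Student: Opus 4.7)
The plan is to run the standard Weiss blow-up argument, now adapted to handle the perturbation term $h_1(r)$ produced by the non-trivial $(-x)^{\alpha}$ factor in the gradient function.

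For part (i), I would apply the monotonicity identity \eqref{eq2} on the shrinking intervals $(\sigma r_m,\rho r_m)$ with $0<\sigma<\rho$ fixed. Integration gives
\begin{equation*}
M_1(\rho r_m)-M_1(\sigma r_m)=2\int_{\sigma r_m}^{\rho r_m}r^{-2-\beta}\int_{\p B_r(X_0)}\Bigl(\nabla u\cdot\nu-\tfrac{\beta+2}{2}\tfrac{u}{r}\Bigr)^{2}d\mathcal{H}^{1}dr+\int_{\sigma r_m}^{\rho r_m}h_1(r)\,dr.
\end{equation*}
By Lemma 2.3 the left-hand side tends to $0$. For the perturbation, using $X_0=(x_0,0)$ with $x_0<0$, the factor $(-x)^{\alpha-1}(x_0-x)$ stays bounded on $B_r(X_0)$ and the estimate \eqref{eq69} yields $|h_1(r)|\le C$, so $\int_{\sigma r_m}^{\rho r_m}h_1\,dr=O(r_m)\to 0$. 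Therefore the nonnegative first term also vanishes. After the change of variable $r=r_m s$ and the scaling identities $\nabla u\cdot\nu=r_m^{\beta/2}\nabla u_m\cdot\nu$ and $u/r=r_m^{\beta/2}u_m/s$, it rewrites as $\int_{\sigma}^{\rho}s^{-2-\beta}\int_{\p B_s}(\nabla u_m\cdot\nu-\frac{\beta+2}{2}u_m/s)^{2}d\mathcal{H}^{1}ds\to 0$. Weak lower semicontinuity under $u_m\rightharpoonup u_0$ in $W^{1,2}_{\mathrm{loc}}$ and strong $L^2$ trace convergence then give $\nabla u_0\cdot\nu=\frac{\beta+2}{2}u_0/|X|$ a.e., i.e.\ $u_0$ is homogeneous of degree $\frac{\beta+2}{2}$.

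For parts (ii) and (iii), I would rescale $M_1(r_m)$ directly. The scaling computations yield
\begin{equation*}
M_1(r_m)=\int_{B_1}|\nabla u_m|^{2}dY+\int_{B_1}(-x_0-r_mY_1)^{\alpha}(-Y_2)^{\beta}\chi_{\{u_m>0\}}dY-\tfrac{\beta+2}{2}\int_{\p B_1}u_m^{2}d\mathcal{H}^{1}.
\end{equation*}
The left-hand side converges to $M_1(0+)$, and by the compact Sobolev trace embedding the last integral converges to $-\frac{\beta+2}{2}\int_{\p B_1}u_0^{2}d\mathcal{H}^{1}$. Since $u_m$ is harmonic in $\{u_m>0\}$, uniform local boundedness from \eqref{eq69} together with elliptic theory yields that $u_0$ is harmonic in $\{u_0>0\}$; combined with $(\beta+2)/2$-homogeneity this gives the Pohozaev-type identity $\int_{B_1}|\nabla u_0|^{2}dY=\frac{\beta+2}{2}\int_{\p B_1}u_0^{2}d\mathcal{H}^{1}$. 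The coefficient $(-x_0-r_m Y_1)^{\alpha}\to(-x_0)^{\alpha}$ uniformly on $B_1$, and after extracting a subsequence the weak-$L^2$ limit of $\chi_{\{u_m>0\}}$ exists, so the middle integral has a limit $L$. Matching all terms in the identity yields $\liminf_m\int_{B_1}|\nabla u_m|^{2}dY\le\int_{B_1}|\nabla u_0|^{2}dY$; combined with weak lower semicontinuity this forces strong $W^{1,2}_{\mathrm{loc}}$ convergence, proving (ii). Substituting back, the gradient and boundary terms cancel via the Pohozaev identity, and dominated convergence on the coefficient gives
\begin{equation*}
M_1(0+)=(-x_0)^{\alpha}\lim_{m\to\infty}\int_{B_1}(y_-)^{\beta}\chi_{\{u_m>0\}}dX,
\end{equation*}
which is (iii).

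The main obstacle is the simultaneous handling of the characteristic-function term and the gradient term in the strong-convergence step: the characteristic functions only converge weakly in $L^2$, so one cannot pass to a pointwise limit directly. The resolution is to use the Pohozaev-type identity for $u_0$ to eliminate the gradient integral without needing any information about the limit of $\chi_{\{u_m>0\}}$, and only then extract the desired convergence of that term as an arithmetic consequence. The control $|h_1(r)|\le C$, which is where the assumption $x_0\ne 0$ in Case 1 is crucial, is what makes the first step of this scheme go through.
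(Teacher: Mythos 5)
Your part (i) is correct and follows the paper's proof essentially verbatim: integrate the monotonicity identity over $(\sigma r_m,\rho r_m)$, kill the perturbation term (your pointwise bound $|h_1(r)|\le C$ is valid and even slightly stronger than the integrability the paper invokes), rescale, and pass to the limit to get $\nabla u_0\cdot X=\frac{\beta+2}{2}u_0$. Your derivation of the Pohozaev-type identity \eqref{eq77} from harmonicity and homogeneity is also fine.

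The problem is in the step that is supposed to deliver (ii). Writing the rescaled identity
\begin{equation*}
M_1(r_m)=\int_{B_1}|\nabla u_m|^{2}\,dY+\int_{B_1}(-x_0-r_mY_1)^{\alpha}(y_-)^{\beta}\chi_{\{u_m>0\}}\,dY-\tfrac{\beta+2}{2}\int_{\p B_1}u_m^{2}\,d\mathcal{H}^{1}
\end{equation*}
and passing to the limit in the first, third, and last terms only tells you that $\lim_m\int_{B_1}|\nabla u_m|^{2}\,dY$ exists and equals $M_1(0+)-L+\frac{\beta+2}{2}\int_{\p B_1}u_0^{2}\,d\mathcal{H}^{1}$. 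To conclude $\liminf_m\int_{B_1}|\nabla u_m|^{2}\,dY\le\int_{B_1}|\nabla u_0|^{2}\,dY$ from this you would need $M_1(0+)\le L$; but $M_1(0+)=L$ is precisely assertion (iii), which in turn is equivalent (via the same identity and Pohozaev) to the energy convergence you are trying to establish. So "matching all terms" is circular: the identity cannot simultaneously produce the strong convergence and the value of $M_1(0+)$. Since $u$ is only a stationary point and not a minimizer, you also cannot recover the missing inequality by an energy-comparison argument.

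The missing ingredient is the paper's integration-by-parts identity: because $\Delta u_m=0$ in $\{u_m>0\}$, testing against (a truncation such as $\max(u_m-\varepsilon,0)^{1+\varepsilon}$ times) $\eta\in C_0^1$ gives, after letting $\varepsilon\to0$,
\begin{equation*}
\int|\nabla u_m|^{2}\eta\,dX=-\int u_m\,\nabla u_m\cdot\nabla\eta\,dX,
\end{equation*}
whose right-hand side converges to $-\int u_0\nabla u_0\cdot\nabla\eta\,dX=\int|\nabla u_0|^{2}\eta\,dX$ by the strong $L^2_{\mathrm{loc}}$ convergence of $u_m$ paired with the weak convergence of $\nabla u_m$. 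This yields $\lim_m\|\nabla u_m\|_{L^2(U)}=\|\nabla u_0\|_{L^2(U)}$, which together with weak convergence gives the strong $W^{1,2}_{\mathrm{loc}}$ convergence of (ii); only then does your limit computation of $M_1(r_m)$ legitimately produce (iii). You should insert this step; the rest of your outline then goes through.
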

	\begin{proof}
		(i) For each $0<\rho<\sigma<\delta$, integrating  \eqref{eq2} in the monotonicity formula with respect to $r$ on $(r_m\rho,r_m\sigma)$, we obtain
		\begin{equation*}
			\begin{aligned}
				&2\int_{B_{\sigma}\backslash B_{\rho}}|X|^{-\beta-4}\left(\nabla u_m(X)\cdot X-\dfrac{\beta+2}{2}u_m(X)\right)^2dX\\
				=&M_1(r_m\sigma)-M_1(r_m\rho)-\int_{r_m\rho}^{r_m\sigma}h_1(r)dr\rightarrow0,\,\,\,\text{as}\,\,m\rightarrow\infty,
			\end{aligned}     
		\end{equation*}
		the above equation is derived also due to the fact that $h_1(r)$ is integrable with respect to $r$ in $(0,\delta)$.
		
		It follows from the fact  that $u_m(X)\rightarrow u_0(X)$ weakly in $W^{1,2}_{\text{loc}}(\mathbb{R}^2)$ that
		\begin{equation}
			\label{eq22}
			\nabla u_0(X)\cdot X-\dfrac{\beta+2}{2}u_0(X)=0\,\,\,\text{a.e. in}\,\,\mathbb{R}^2,
		\end{equation}
		the homogeneity we wanted can be easily derived from \eqref{eq22}.
		
		(ii) 	We start by noticing that by the Bernstein estimate and the weak convergence of $u_m$,  $u_m\in W^{1,\infty}(B_r)$, for each $0<r<+\infty$. The strong convergence of the blow-up sequence in $L^2_{\text{loc}}(\mathbb{R}^2)$ can be derived by the compact embedding $W^{1,\infty}(U)\hookrightarrow L^{\infty}(U)\hookrightarrow L^2(U)$, where $U$ is a bounded domain of $ \mathbb{R}^2 $.
		
		Now, we claim that
		\begin{equation}
			\label{eq32}
			\limsup\limits_{m\rightarrow\infty}\left \|  \nabla u_m\right \|_{L^2(U)}\leq\left \| \nabla u_0\right \|_{L^2(U)}.
		\end{equation}

		In fact, for each $m$, the blow-up sequence $u_m$ satisfies 
		\begin{equation}
			\label{eq74}
			\Delta u_m=0\,\,\, \text{in }\, B_{\delta/r_m}\cap\{u_m>0\}.
		\end{equation} 
		Taking $m\rightarrow\infty$, 	 $u_m$ converges locally uniformly to $u_0$ in $\{u_m>0\}$ which gives that
		\begin{equation}
			\label{eq33}
			\Delta u_0=0\,\,\,\text{in }\,\{u_0>0\}.
		\end{equation}

		Besides, we give the following definition by the idea of mollification, for any $\varepsilon>0$,
		\begin{equation*}
			\begin{aligned}
				\operatorname{max}\left( u_m-\varepsilon,0\right) ^{1+\varepsilon}=\begin{cases}
					(u_m-\varepsilon)^{1+\varepsilon}\,\,&u_m>\varepsilon,\\
					0&0\leq u_m<\varepsilon.
				\end{cases}
			\end{aligned}
		\end{equation*}
		Noticing that $\operatorname{supp}\left( \operatorname{max}\left( u_m-\varepsilon,0\right) ^{1+\varepsilon}\right) \subset\{u_m>0\}$, we have 
		\begin{equation}
			\begin{aligned}	
				\label{eq21}
				\int_{\mathbb{R}^2}\nabla u_m\cdot\nabla\operatorname{max}\left( u_m-\varepsilon,0\right) ^{1+\varepsilon}\eta dX
				=-\int_{\mathbb{R}^2}u_m\nabla\operatorname{max}\left( u_m-\varepsilon,0\right) ^{1+\varepsilon}\cdot\nabla \eta d\mathcal{H}^{1},
			\end{aligned}
		\end{equation}
		for any $\eta\in C_0^1(\mathbb{R}^2)$.
		
		Let $\varepsilon\rightarrow0$, together with the convergence of $u_m$, we obtain
		\begin{equation}
			\label{eq75}
			\lim\limits_{m\rightarrow\infty}\int_{\mathbb{R}^2}|\nabla u_m|^2\eta dX=\int_{\mathbb{R}^2}|\nabla u_0|^2\eta dX,
		\end{equation}
		the claim \eqref{eq32} immediately follows from the equation \eqref{eq75}. Eventually, by means of Proposition 3.32 in \cite{Brezis},
		we  can  show that there exists a subsequence still relabeled by $u_m$ such that $\nabla u_m\rightarrow\nabla u_0$ strongly in $L^2_{\text{loc}}(\mathbb{R}^2).$
		
		(iii) 	For any $m\rightarrow\infty$, $r_m $ is an infinitesimal sequence of positive real numbers, we calculate the limit of $M_1(r_m)$ directly, 
		\begin{equation}
			\label{eq78}
			\begin{aligned}
				M_1(0+)=\lim\limits_{m\rightarrow\infty}&M_1(r_m)\\
				=\lim\limits_{m\rightarrow\infty}&\bigg[ \int_{B_1}\left( |\nabla u_m|^2+(-x_0)^{\alpha}(y_-)^{\beta}\chi_{\{u_m>0\}}\right) dX-\dfrac{\beta+2}{2}\int_{\p B_1}u_m^2d\mathcal{H}^1\bigg]\\
				=(-x_0&)^{\alpha}\lim\limits_{m\rightarrow\infty}\int_{B_1}(y_-)^{\beta}\chi_{\{u_m>0\}}dX+ \int_{B_1}|\nabla u_0|^2dX-\dfrac{\beta+2}{2}\int_{\p B_1}u_0^2d\mathcal{H}^1,
			\end{aligned}
		\end{equation}
		the last equality being a consequence of the strong convergence and trace theorem.
		
		On the other hand, the homogeneity of  $u_0$ suggests that
		\begin{equation}
			\label{eq77}
			\int_{B_1}|\nabla u_0|^2dX=\dfrac{\beta+2}{2}\int_{\p B_1}u_0^2d\mathcal{H}^1.
		\end{equation}
		Accordingly, 	substituting \eqref{eq77} into \eqref{eq78} yields the conclusion \eqref{eq43}.
	\end{proof}
	
	It is apparent from	\eqref{eq43}  that the result of $M_1(0+)$ is decided by the limit of characteristic function $\lim\limits_{m\rightarrow\infty}\chi_{\{u_m>0\}}$. In the next proposition, we deduce that $\lim\limits_{m\rightarrow\infty}\chi_{\{u_m>0\}}$ must be $$\chi_{\left\lbrace-\frac{\beta+4}{2(\beta+2)}\pi <\theta<-\frac{\beta}{2(\beta+2)}\pi\right\rbrace }\,\,\text{or}\,\, 0\,\,\text{or}\,\,1,$$ thereby giving all possible exact forms of the blow-up limits and the corresponding densities for Subcase 1.1.
	\begin{prop}
		Let $u$ be a weak solution of the free boundary problem \eqref{eq1} satisfying \eqref{eq69}. Then  we may decompose $S^u$ into the disjoint union $S^u=S^u_{1,\text{corner}}\cup S^u_{1,\text{cusp}}\cup S^u_{1,\text{flat}}$, we define 
		
		(i) $S^u_{1,\text{corner}}=\left\lbrace X=(x_0,0)\in S^u\Big| M_1(0+)=(-x_0)^{\alpha}\int_{B_1\bigcap\left\lbrace -\frac{\beta+4}{2(\beta+2)}\pi <\theta<-\frac{\beta}{2(\beta+2)}\pi\right\rbrace } (y_-)^{\beta}dX\right\rbrace $, and the corresponding  blow-up limit is 
		\begin{equation*}
			\begin{aligned}
				u_0(r\cos\theta,r\sin\theta)
				=\dfrac{2(-x_0)^{\frac{\alpha}{2}}}{\beta+2}r^\frac{\beta+2}{2}\cos^{\frac{\beta}{2}}\left( \dfrac{\pi}{\beta+2}\right)  \cos\left( \dfrac{\beta+2}{2}\theta+\dfrac{\beta+2}{4}\pi\right) \chi_{\left\lbrace-\frac{\beta+4}{2(\beta+2)}\pi <\theta<-\frac{\beta}{2(\beta+2)}\pi\right\rbrace },
			\end{aligned}
		\end{equation*}
		the corner wave profile see Fig. 6(a);
		
		(ii)	$S^u_{1,\text{cusp}}=\left\lbrace X=(x_0,0)\in S^u|M_1(0+)=0\right\rbrace $, and the corresponding  blow-up limit is 
		$$u_0\equiv0,$$
		the cusp wave profile see Fig. 11(a) and (b);
		
		(iii)	$S^u_{1,\text{flat}}=\left\lbrace X=(x_0,0)\in S^u\big|M_1(0+)=(-x_0)^{\alpha}\int_{B_1} (y_-)^{\beta}dX\right\rbrace $, and the corresponding  blow-up limit is 
		$$u_0\equiv0,$$
		the flat wave profile see Fig. 11(c).
		\begin{figure}[!h]
			\includegraphics[width=150mm]{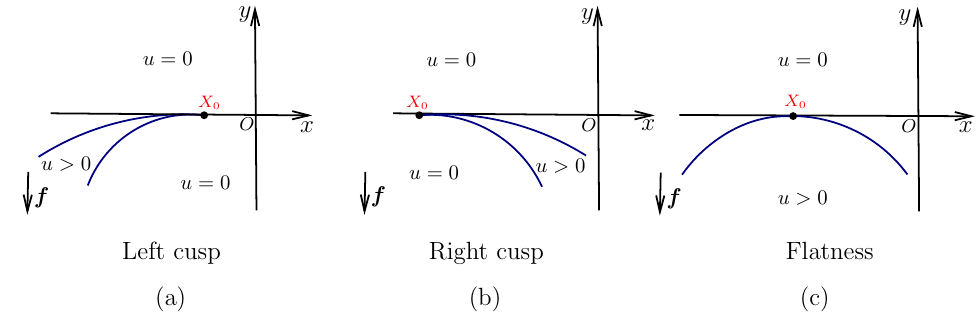}	
			\caption{Cusp point and flat  point }
		\end{figure}
	\end{prop}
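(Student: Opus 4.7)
The plan is to analyze the structure of the blow-up limit $u_0$ produced by Lemma 2.4, distinguishing the two regimes $u_0\not\equiv 0$ and $u_0\equiv 0$.

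First, when $u_0\not\equiv 0$: Lemma 2.4 gives that $u_0$ is $(\beta+2)/2$-homogeneous and harmonic on $\{u_0>0\}$, while Definition 1.7(i) with $\theta_0=3\pi/2$ forces $u_0\equiv 0$ on $\{y\geq 0\}$. Writing $u_0(r\cos\theta,r\sin\theta)=r^{(\beta+2)/2}\phi(\theta)$ with $\phi\geq 0$ supported in $[-\pi,0]$, the Laplace equation reduces to the ODE $\phi''+\bigl(\frac{\beta+2}{2}\bigr)^{2}\phi=0$ on each connected component of $\{\phi>0\}$; its sinusoidal solutions pin consecutive zeros of $\phi$ at angular distance $2\pi/(\beta+2)$. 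I would then pass the free boundary gradient condition to the limit, using the Bernstein bound of Assumption A together with the strong $W^{1,2}_{\mathrm{loc}}$ convergence from Lemma 2.4(ii), to obtain $|\phi'(\theta_j)|^{2}=(-x_0)^{\alpha}|\sin\theta_j|^{\beta}$ at each endpoint $\theta_j$ of a positivity sector. Because $\phi$ is a shifted cosine on the sector, $|\phi'|$ coincides at the two endpoints, hence $|\sin\theta_1|=|\sin\theta_2|$; combined with $\theta_j\in[-\pi,0]$ and $\theta_2-\theta_1=2\pi/(\beta+2)$ this forces $\theta_1+\theta_2=-\pi$, placing the sector symmetrically about $\theta=-\pi/2$. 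Normalising via the gradient condition yields the constant $C=\frac{2(-x_0)^{\alpha/2}}{\beta+2}\cos^{\beta/2}\bigl(\frac{\pi}{\beta+2}\bigr)$, and hence the explicit expression in (i); Lemma 2.4(iii) then gives the stated density.

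Second, when $u_0\equiv 0$: the value of $M_1(0+)$ is determined solely by the limit $\chi_\infty:=\lim_{m\to\infty}\chi_{\{u_m>0\}}$ in $L^{1}_{\mathrm{loc}}$, so the task reduces to showing that $\chi_\infty$ equals either $0$ or $\chi_{\{y<0\}}$ almost everywhere on $B_1$. I would exploit the scaling structure of the blow-up together with the monotonicity formula of Lemma 2.1 to deduce that $\chi_\infty$ is $0$-homogeneous, hence a cone; then combine the support constraint $\chi_\infty\equiv 0$ on $\{y>0\}$ with the vanishing $u_0\equiv 0$. A proper cone strictly between $\emptyset$ and $\{y<0\}\cap B_1$ would, by the ODE analysis of the preceding paragraph, carry a non-zero $(\beta+2)/2$-homogeneous harmonic mode on its interior, contradicting $u_0\equiv 0$. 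The two remaining cones yield precisely $M_1(0+)=0$ (cusp) and $M_1(0+)=(-x_0)^{\alpha}\int_{B_1}(y_-)^{\beta}\,dX$ (flat).

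Assembling these three mutually exclusive values of $M_1(0+)$ produces the disjoint decomposition $S^u=S^u_{1,\mathrm{corner}}\cup S^u_{1,\mathrm{cusp}}\cup S^u_{1,\mathrm{flat}}$ with blow-up limits as stated. The main obstacle I anticipate is the second regime: rigorously ruling out intermediate cones when the blow-up limit vanishes. The perturbation term $h_1(r)$ in Lemma 2.1 must also be controlled throughout this argument; fortunately it is bounded because the factor $|x_0-x|$ contributes an $r$ which compensates the scaling, so $\int_{0}^{r}h_1(s)\,ds=O(r)\to 0$, and the usual homogeneity-based classification can then be run essentially as in the gravity case of \cite{VARVAR2011}.
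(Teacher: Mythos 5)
Your treatment of the non-degenerate regime ($u_0\not\equiv 0$) is essentially the paper's argument: homogeneity plus harmonicity gives the sinusoidal profile with sector opening $2\pi/(\beta+2)$, and the limiting gradient condition forces $|\sin\theta_1|=|\sin\theta_2|$, hence the symmetric placement about $\theta=-\pi/2$ and the stated constant. The one step you gloss over is how the Neumann condition survives the blow-up: strong $W^{1,2}_{\mathrm{loc}}$ convergence of $u_m$ does not by itself transfer a pointwise condition on the moving boundaries $\p\{u_m>0\}$ to $\p\{u_0>0\}$. The paper instead passes to the limit in the first domain variation identity \eqref{eq23}, obtaining the stationarity identity \eqref{eq7} for the pair $(u_0,\chi_0)$, and then tests it with $\phi=\eta\,\nu(Z)$ near a point $Z$ of the (conical, hence smooth away from the origin) free boundary of $u_0$; Hopf's lemma then upgrades this to $|\nabla u_0|^2=(-x_0)^\alpha(y_-)^\beta$ on $\p\{u_0>0\}$. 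You should route your argument through the variational identity in the same way.

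The genuine gap is in the degenerate regime $u_0\equiv 0$, and you flag it yourself. Your proposed exclusion of ``intermediate cones'' --- that a cone strictly between $\varnothing$ and $\{y<0\}\cap B_1$ would ``carry a non-zero homogeneous harmonic mode, contradicting $u_0\equiv 0$'' --- is a non sequitur: $\chi_\infty$ is the limit of the characteristic functions $\chi_{\{u_m>0\}}$, not the indicator of $\{u_0>0\}$, and when $u_0\equiv 0$ there is no harmonic mode to speak of; nothing in your ODE analysis prevents $\operatorname{supp}\chi_\infty$ from being, say, $\{-3\pi/4<\theta<-\pi/4\}$. The paper closes this exactly where your argument stalls: specializing the limiting identity \eqref{eq7} to $u_0\equiv 0$ leaves $(-x_0)^\alpha\int\operatorname{div}\bigl((y_-)^\beta\phi\bigr)\chi_0\,dX=0$ for all test fields $\phi$, which says that the distributional gradient of $\chi_0$ vanishes wherever $(y_-)^\beta>0$; since the open lower half-plane is connected, $\chi_0$ is a constant in $\{0,1\}$ there, yielding exactly the two admissible values $M_1(0+)=0$ (cusp) and $M_1(0+)=(-x_0)^\alpha\int_{B_1}(y_-)^\beta\,dX$ (flat). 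No homogeneity of $\chi_\infty$ is needed, and the ``intermediate cone'' obstacle disappears. Your observation that $\int_0^r h_1(s)\,ds=O(r)$ is correct and consistent with how the paper handles the perturbation term.
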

	\begin{proof}
		Setting $\phi=(\phi_1,\phi_2)\in C^1_0(\mathbb{R}^2,\mathbb{R}^2)$ as the test function in \eqref{eq23}, then we obtain  
		\begin{equation*}
			\begin{aligned}
				0=&\int_{\Omega_m}|\nabla u_m|^2\operatorname{div}\phi dZ-2\int_{\Omega_m}\nabla u_m D\phi\nabla u_mdZ
				+\int_{\Omega_m}(-x_0-r_mz_1)^{\alpha}(z_{2-})^{\beta}\operatorname{div}\phi\chi_{\{u_m>0\}}dZ\\
				&+\int_{\Omega_m}\big(-\alpha(-x_0-r_mz_1)^{\alpha-1}r_m(z_{2-})^{\beta}\phi_1-\beta(-x_0-r_mz_1)^{\alpha}(z_{2-})^{\beta-1}\phi_2\big)\chi_{\{u_m>0\}}dZ,
			\end{aligned}
		\end{equation*}
		where  $\Omega_m=\{Z=(z_1,z_2)|X_0+r_mZ\in\Omega\}$. Passing to the limit as  $ m\rightarrow \infty$, we get that
		\begin{equation}
			\label{eq7}
			\begin{aligned}
				0=&\int_{\mathbb{R}^2}\left(|\nabla u_0|^2\operatorname{div}\phi-2\nabla u_0D\phi\nabla u_0\right)dX	+\int_{\mathbb{R}^2}(-x_0)^{\alpha}(y_-)^{\beta}\operatorname{div}\phi\chi_0dX\\
				&+\int_{\mathbb{R}^2}\big(-\beta(-x_0)^{\alpha}(y_-)^{\beta-1}\phi_2\big)\chi_0dX,
			\end{aligned}
		\end{equation}
		where $\chi_0$ is the strong $L^1_{\text{loc}}$-limit of $\chi_{\{u_m>0\}}$ along a subsequence. The values of the function $\chi_0$ are almost everywhere in $\{0,1\}$, and the locally uniform convergence of $u_m$ to $u_0$ implies that 
		\begin{equation}
			\label{eq24}
			\chi_{0}=1\text{ in }\{u_0>0\}.
		\end{equation}
		
		Suppose first that the set $\{u_0>0\}$ is   empty, then \eqref{eq7} tells us that in the interior of $\{u_0=0\}$, 
		\begin{equation}
			\label{eq79}
			(-x_0)^{\alpha}\int_{\mathbb{R}^2}\operatorname{div}\big((y_-)^{\beta}\phi\big)\chi_0dX=0.
		\end{equation}
		It follows from \eqref{eq79} that   $\chi_0$ is a constant in each connected component of the interior of $\{u_0=0\}\cap\{X=(x,y)|y<0\}$, and we  denote it  as $\overline{\chi}_0$. Thus it is sufficient to show that when the blow-up limit $u_0\equiv0$, the corresponding density is
		$$M_1(0+)= 0\,\,\text{or}\,\,M_1(0+)=(-x_0)^{\alpha}\int_{B_1} (y_-)^{\beta}dX.$$
		
		Consider now the second case when  $\{u_0>0\}$ is not an empty set.
		
		Together with the homogeneity of blow-up limit	$u_0$, we can write the harmonic function $u_0$ in polar coordinates, we have that 
		\begin{equation}
			\label{eq80}
			u_0(r,\theta)=C_0r^{\frac{\beta+2}{2}}\cos\left( \dfrac{\beta+2}{2}\theta+\varphi_0\right),
		\end{equation}
		where $C_0>0$ and $\varphi_0$ are constants to be determined. Each connected component of $\{u_0>0\}$ is thereby a cone with vertex at the origin and an  angle of $\frac{2\pi}{\beta+2}$. 
		
		If we 	choose any point  $Z\in\p\{u_0>0\}\backslash\{(0,0)\}$, there is a sufficiently small constant $\omega$ such that the outer normal to $\p\{u_0>0\}$ is a constant vector $\nu(Z)$ in $B_{\omega}(Z)\cap\p\{u_0>0\}$. Setting $\phi(X):=\eta(X)\nu(Z)$ for any $\eta\in C_0^1(B_\omega(Z))$ as the test function in \eqref{eq7}, combining with \eqref{eq24}, we get that
		\begin{equation}
			\label{eq25}
			\begin{aligned}
				\int_{B_\omega(Z)\cap\p\{u_0>0\}}|\nabla u_0|^2\eta d\mathcal{H}^1=(-x_0)^{\alpha}\int_{B_\omega(Z)\cap\p\{u_0>0\}}(y_-)^{\beta}(1-\overline{\chi_0})\eta d\mathcal{H}^1.
			\end{aligned}
		\end{equation}
		Indeed, it follows from Hopf's lemma that $|\nabla u_0|\neq0$ on $B_\omega(Z)\cap\p\{u_0>0\}$, from which and \eqref{eq25}, we deduce 
		\begin{equation}
			\label{eq26}
			\begin{aligned}
				|\nabla u_0|^2=(-x_0)^{\alpha}(y_-)^{\beta}\,\,\text{on}\,\,\p\{u_0>0\}.
			\end{aligned}
		\end{equation}
		
		Now, we will establish the possible explicit form of $u_0$ with the help of \eqref{eq26}. Applying \eqref{eq80} in \eqref{eq26}, we have that
		\begin{equation}
			\label{eq27}
			\dfrac{(\beta+2)^2}{4}C_0^2=(-x_0)^{\alpha}\left( \left( \sin\theta\right)_-\right)  ^{\beta},
		\end{equation}
		which holds on both boundaries of the cone.
		
		We denote	the boundaries of the cone intersect  $ x $-axis at  angles $\theta_1$ and $\theta_2$ respectively, and $\theta_1<\theta_2$. Besides, $x_0<0$ is mentioned previously, then the following relationships hold,
		\begin{center}
			$	\sin\theta_1=\sin\theta_2<0$ and $\theta_2-\theta_1=\dfrac{2\pi}{\beta+2}$.
		\end{center}
		Thus one has that $ \theta_1=-\frac{\pi}{\beta+2}-\frac{\pi}{2}=-\frac{\beta+4}{2(\beta+2)}\pi$ and $ \theta_2=\frac{\pi}{\beta+2}-\frac{\pi}{2}=-\frac{\beta}{2(\beta+2)}\pi $, which yields that
		$$C_0=\dfrac{2(-x_0)^{\frac{\alpha}{2}}}{\beta+2}\cos^{\frac{\beta}{2}}\left( \dfrac{\pi}{\beta+2}\right) .$$
		
		Moreover, it follows from the boundary condition that $\varphi_0=\frac{\beta+2}{4}\pi+k\pi$, for some integer $k$. Noticing that $\varphi_0=\frac{\beta+2}{4}\pi$ and $u_0(r,\theta)\geq0$ for $k=0$. Finally, the expression of $u_0$ is
		\begin{equation*}
			\begin{aligned}
				u_0(r,\theta)
				=\dfrac{2(-x_0)^{\frac{\alpha}{2}}}{\beta+2}r^\frac{\beta+2}{2}\cos^{\frac{\beta}{2}}\left( \dfrac{\pi}{\beta+2}\right)  \cos\left( \dfrac{\beta+2}{2}\theta+\dfrac{\beta+2}{4}\pi\right) \chi_{\left\lbrace-\frac{\beta+4}{2(\beta+2)}\pi <\theta<-\frac{\beta}{2(\beta+2)}\pi\right\rbrace },
			\end{aligned}
		\end{equation*}
		and	the corresponding weighted density is 
		$$M_1(0+)=(-x_0)^{\alpha}\int_{B_1\bigcap\left\lbrace -\frac{\beta+4}{2(\beta+2)}\pi <\theta<-\frac{\beta}{2(\beta+2)}\pi\right\rbrace } (y_-)^{\beta}dX,$$
		which concludes the proof of	Proposition 2.5.
	\end{proof}

	Reasoning as above, we just need to substitute the equation \eqref{eq42} as follows respectively,
	\begin{center}
		$h_0(x,y)=x^{\alpha}y^{\beta}\,\,$ for Subcase 1.2;
	\end{center}
	\begin{center}
		$h_0(x,y)=(-x)^{\alpha}y^{\beta}\,\,$ for Subcase 1.3;
	\end{center}
	\begin{center}
		$h_0(x,y)=x^{\alpha}(-y)^{\beta}\,\,$ for Subcase 1.4.
	\end{center}
	The pertinent results have been stated in Theorem 1.1 without proof.
	\subsection{Case 2. Type 2 stagnation point} 
	\
	\newline \indent 
	We consider the gradient function in Subcase 2.1 as 
	\begin{equation}
		\label{eq44}
		h_0(x,y)=(-x)^{\alpha}(-y)^{\beta},\,\,\alpha\geq1,\beta\geq0.
	\end{equation}
	Similar to what we did in the proof of Subcase 1.1, we can obtain the following conclusions of Subcase 2.1.
	\begin{rem}
		Note that \eqref{eq28} implies that for any small $r<r_0$, 
		\begin{equation}
			\label{eq94}
			u(X)\leq C_2 (-x)^{\frac{\alpha}{2}+1}(-y)^{\frac{\beta}{2}}
		\end{equation}
		locally in $B_r(X_0)$ for a positive constant $C_2$.
	\end{rem}
	\begin{prop}
		Suppose that \eqref{eq94} holds, and let $u$ be a weak solution of the free boundary problem \eqref{eq1}, then the following conclusions hold.
		
		(i)	(Monotonicity formula) For any $r\in (0,\delta)$, recalling \eqref{eq66}, the Weiss adjusted boundary energy is 
		\begin{equation}
			\label{eq81}
			\begin{aligned}
				M_2(r)=r^{-2-\alpha}\int_{B_{r}(X_0)}\left(\left |\nabla u\right |^{2}+(-x)^{\alpha}(-y)^{\beta}\chi_{\{u>0\}}\right)dX-\dfrac{\alpha+2}{2}r^{-3-\alpha}\int_{\p B_{r}(X_0)}u^{2}d\mathcal{H}^{1}.
			\end{aligned}
		\end{equation}
		Then 	the function $M_2(r)$ is differentiable almost everywhere on $(0,\delta)$ and for a.e. $r\in (0,\delta)$, we have 
		\begin{equation*}
			\dfrac{	dM_2(r)}{dr}=
			2r^{-2-\alpha}\int_{\p B_{r}(X_0)}\left(\nabla u\cdot\nu-\frac{\alpha+2}{2}\frac{u}{r}\right)^2d\mathcal{H}^{1}+h_2(r),
		\end{equation*}
		where 
		$$h_2(r)=r^{-3-\alpha}\int_{B_{r}(X_0)}\beta(-x)^{\alpha}(y_0-y)(-y)^{\beta-1}\chi_{\{u>0\}}dX.$$
		
		(ii) (Blow-up sequence and blow-up limit) For  $m\rightarrow\infty$,	let $r_m $ be a vanishing sequence of positive real numbers such that the blow-up sequence $u_m(X)=r_m^{-\frac{\alpha+2}{2}}u(X_0+r_mX)$ converges weakly in $W_{\text{loc}}^{1,2}(\mathbb{R}^2)$ to a blow-up limit $u_0$. Then $u_0$ is a homogeneous function of $\frac{\alpha+2}{2}$ degree. Moreover, $u_m$ converges strongly to $u_0$ in $W_{\text{loc}}^{1,2}(\mathbb{R}^2)$.

		(iii) (Weighted  density) The limit $M_2(0+)$ exists and the weighted density is  $$M_2(0+)=(-y_0)^{\beta}\lim\limits_{m      \rightarrow\infty}\int_{B_1}(x_-)^{\alpha}\chi_{\{u_m>0\}}dX.$$
		
		(iv) (Possible singular profiles) The set of corner wave stagnation point is  $$S^u_{2,\text{corner}}=\left\lbrace X=(0,y_0)\in S^u\bigg|M_2(0+)=(-y_0)^{\beta}\int_{B_1\bigcap\left\lbrace\frac{\alpha+1}{\alpha+2}\pi<\theta<\frac{\alpha+3}{\alpha+2}\pi\right\rbrace } (x_-)^{\alpha}dX\right\rbrace ,$$ and the corresponding  blow-up limit is 
		\begin{equation*}
			\begin{aligned}
				u_0(r,\theta)
				=\dfrac{2(-y_0)^{\frac{\beta}{2}}}{\alpha+2}r^\frac{\alpha+2}{2}\cos^{\frac{\alpha}{2}}\left( \frac{\alpha+1}{\alpha+2}\pi\right) \cos\left( \dfrac{\alpha+2}{2}\theta+\dfrac{2-\alpha}{2}\pi\right) \chi_{\left\lbrace \frac{\alpha+1}{\alpha+2}\pi<\theta<\frac{\alpha+3}{\alpha+2}\pi\right\rbrace }.
			\end{aligned}
		\end{equation*}
		
		The set of cusp stagnation point and flat stagnation point are   $$S^u_{2,\text{cusp}}=\left\lbrace X=(0,y_0)\in S^u\big|M_2(0+)=0\right\rbrace$$ and $$S^u_{2,\text{flat}}=\left\lbrace X=(0,y_0)\in S^u\Big|M_2(0+)=(-y_0)^{\beta}\int_{B_1} (x_-)^{\alpha}dX\right\rbrace $$ respectively, and the corresponding  blow-up limit of the both is 
		$$u_0\equiv0.$$
		The corner wave profile see Fig. 8(a), the cusp wave profile see Fig. 12(a) and (b), the flat wave profile see Fig. 12(c).
		\begin{figure}[h]
			\includegraphics[width=130mm]{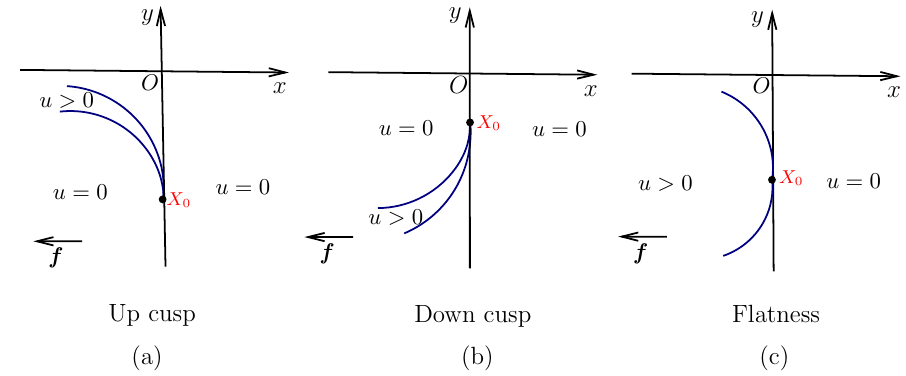}	
			\caption{Cusp point  and flat point}
		\end{figure}
	\end{prop}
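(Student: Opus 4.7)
The plan is to adapt the argument of Lemma 2.1 through Proposition 2.5 (Subcase 1.1) to the new homogeneity exponent $\kappa_2 = -(\alpha+2)/2$, exchanging the roles played by the $x$ and $y$ directions. The degenerate factor in the gradient condition is now $(-x)^\alpha$, while $(-y)^\beta$ is smooth and nonvanishing at $X_0 = (0,y_0)$ with $y_0 < 0$; consequently the perturbation term in the derivative of the Weiss energy will come from differentiating $(-y)^\beta$ (via the factor $\beta(-y)^{\beta-1}(y_0-y)$) rather than from the $x$-factor, which accounts for the stated expression of $h_2(r)$.

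For part (i), I would write $M_2(r) = I_2(r) - \frac{\alpha+2}{2}J_2(r)$ with $I_2,J_2$ the obvious analogues of the quantities in the proof of Lemma 2.1, and differentiate each separately. The derivative of $J_2$ is obtained by the usual rescaling and integration-by-parts trick, producing the boundary terms $-(\alpha+2)r^{-4-\alpha}\int_{\partial B_r}u^2 + 2r^{-3-\alpha}\int_{\partial B_r} u\,\nabla u\cdot\nu$. For $I_2$, I would use the domain-variation identity \eqref{eq23} with the radial test field $\phi(X) = \eta_\varepsilon(|X-X_0|)(X-X_0)$ and let $\varepsilon\downarrow 0$, so that the bulk terms contribute exactly $2r^{-2-\alpha}\int_{\partial B_r}(\nabla u\cdot\nu)^2 - (2+\alpha)r^{-3-\alpha}\int_{\partial B_r}u\,\nabla u\cdot\nu$, with the remaining lower-order piece coming from the $y$-component of the forcing term in \eqref{eq23} producing $h_2(r)$. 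Summing and completing the square gives the claimed monotonicity formula.

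For parts (ii) and (iii), I would repeat the argument of Lemma 2.3 and Lemma 2.4 almost verbatim. Integrability of $h_2(r)$ on $(0,\delta)$ (which is immediate since $(-y)^{\beta-1}$ stays bounded near $X_0$) together with the Bernstein-type bound \eqref{eq94} shows that $r\mapsto M_2(r) - \int_0^r h_2(s)\,ds$ is nondecreasing and bounded, so $M_2(0+)$ exists. Integrating the monotonicity identity between $r_m\rho$ and $r_m\sigma$ and passing to the limit yields $\nabla u_0(X)\cdot X - \frac{\alpha+2}{2}u_0(X) = 0$ a.e., i.e.\ $(\alpha+2)/2$-homogeneity. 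The strong $W^{1,2}_{\mathrm{loc}}$ convergence is obtained via the same mollification argument leading to \eqref{eq75}, and plugging the strong convergence together with the homogeneity identity $\int_{B_1}|\nabla u_0|^2 = \frac{\alpha+2}{2}\int_{\partial B_1}u_0^2$ into $M_2(r_m)$ cancels the gradient and boundary contributions, leaving the weighted-density formula in (iii).

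For part (iv), I would pass to the limit in the first-variation identity to obtain the analogue of \eqref{eq7}, observe that on connected components of the interior of $\{u_0 = 0\}$ the limiting characteristic function $\chi_0$ is constant, and deduce that when $\{u_0>0\}=\emptyset$ only the values $M_2(0+)=0$ (cusp) and $M_2(0+)=(-y_0)^\beta\int_{B_1}(x_-)^\alpha\,dX$ (flat) are possible. When $\{u_0>0\}\neq\emptyset$, harmonicity and homogeneity force $u_0(r,\theta) = C_0 r^{(\alpha+2)/2}\cos(\tfrac{\alpha+2}{2}\theta + \varphi_0)$ on a cone of opening angle $\tfrac{2\pi}{\alpha+2}$. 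The main geometric step — the one I expect to require the most care — is the determination of the angles: testing \eqref{eq7} with a localized normal vector field, invoking Hopf to see $|\nabla u_0|\neq 0$ on the free boundary, and using \eqref{eq24} to deduce $|\nabla u_0|^2 = (x_-)^\alpha(-y_0)^\beta$ on $\partial\{u_0>0\}$. Because $y_0<0$ and the degenerate factor $(x_-)^\alpha$ requires the cone boundaries to lie in $\{x<0\}$, symmetry about the negative $x$-axis forces the two boundary rays at $\theta_1 = \pi - \tfrac{\pi}{\alpha+2} = \tfrac{\alpha+1}{\alpha+2}\pi$ and $\theta_2 = \pi + \tfrac{\pi}{\alpha+2} = \tfrac{\alpha+3}{\alpha+2}\pi$; solving the boundary equation then fixes $C_0 = \tfrac{2(-y_0)^{\beta/2}}{\alpha+2}\cos^{\alpha/2}(\tfrac{\alpha+1}{\alpha+2}\pi)$ and the phase $\varphi_0 = \tfrac{2-\alpha}{2}\pi$ (selecting the integer $k$ so that $u_0\geq 0$ on the cone). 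This yields the stated corner profile and its weighted density, completing the classification.
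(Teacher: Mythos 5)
Your proposal is correct and follows exactly the route the paper intends: the paper itself gives no separate proof of this proposition, stating only that it is obtained "similar to what we did in the proof of Subcase 1.1," and your adaptation — swapping the roles of the degenerate and non-degenerate directions so that the perturbation term $h_2$ arises from the $(-y)^\beta$ factor, then reproducing the homogeneity, strong-convergence, density, and angle-determination arguments of Lemma 2.1 through Proposition 2.5 with $\kappa_2=-\frac{\alpha+2}{2}$ — is precisely that argument, with the symmetry of the cone about the negative $x$-axis correctly yielding $\theta_1=\frac{\alpha+1}{\alpha+2}\pi$, $\theta_2=\frac{\alpha+3}{\alpha+2}\pi$ and the stated $C_0$ and $\varphi_0$.
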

	
	\subsection{Case 3. Type 3 stagnation point}
	\
	\newline\indent
	In this subsection, we would like to study the singular profile near Type 3 stagnation point $X_0=(0,0)$. 
	Similar to what we did in Lemma 2.1, the derivative of the Weiss functional for the Type 3 stagnation point will be concluded firstly.
	\begin{lemma}
		(Monotonicity formula). Let $u$ be a weak solution of \eqref{eq1} and  for any $r\in (0,\delta)$, recalling \eqref{eq66}  we have that
		\begin{equation*}
			\begin{aligned}
				M_3(r)=&r^{-2-\alpha-\beta}\int_{B_{r}(X_0)}\left(\left |\nabla u\right |^{2}+|x|^{\alpha}|y|^{\beta}\chi_{\{u>0\}}\right)dX-\dfrac{\alpha+\beta+2}{2}r^{-3-\alpha-\beta}\int_{\p B_{r}(X_0)}u^{2}d\mathcal{H}^{1}.
			\end{aligned}
		\end{equation*}
		Then,	the function $M_3(r)$ is differentiable almost everywhere on $(0,\delta)$ and for almost everywhere $r\in (0,\delta)$, we have 
		\begin{equation*}
			\dfrac{dM_3(r)}{dr}=
			2r^{-2-\alpha-\beta}\int_{\p B_{r}(X_0)}\left(\nabla u\cdot\nu-\frac{\alpha+\beta+2}{2}\frac{u}{r}\right)^2d\mathcal{H}^{1}.
		\end{equation*}
	\end{lemma}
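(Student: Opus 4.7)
The plan is to mirror the strategy of Lemma 2.1, writing $M_3(r) = I_3(r) - \tfrac{\alpha+\beta+2}{2} J_3(r)$ with
\[
I_3(r) = r^{-2-\alpha-\beta}\!\int_{B_r(X_0)}\!\!\bigl(|\nabla u|^2 + |x|^\alpha|y|^\beta \chi_{\{u>0\}}\bigr)dX,\quad J_3(r) = r^{-3-\alpha-\beta}\!\int_{\partial B_r(X_0)}\!\! u^2 \, d\mathcal{H}^1,
\]
and differentiating each piece in $r$. For $J_3$, the same rescaling computation that produced \eqref{eq76} gives
\[
\frac{dJ_3}{dr} = -(\alpha+\beta+2) r^{-4-\alpha-\beta}\!\int_{\partial B_r}\! u^2 \, d\mathcal{H}^1 + 2 r^{-3-\alpha-\beta}\!\int_{\partial B_r}\! u\,\nabla u\cdot\nu\, d\mathcal{H}^1.
\]

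For $I_3$, I would feed the radial test vector field $\phi_\varepsilon(X) = X\, \eta_\varepsilon(|X|)$ into the domain-variation identity \eqref{eq23}, where $\eta_\varepsilon$ is a smooth approximation of $\chi_{[0,r)}$, and then let $\varepsilon\to 0$. The crucial feature that distinguishes Case 3 from the earlier cases is that the stagnation point is $X_0 = 0$ and the weight $|x|^\alpha|y|^\beta$ is homogeneous of degree $\alpha+\beta$ about the origin. Accordingly, the nonconservative-looking term in \eqref{eq23} collapses by Euler's identity,
\[
\alpha|x|^{\alpha-1}|y|^\beta \phi_{\varepsilon,1}\operatorname{sgn}(x) + \beta|x|^\alpha|y|^{\beta-1}\phi_{\varepsilon,2}\operatorname{sgn}(y) = (\alpha+\beta)|x|^\alpha|y|^\beta\, \eta_\varepsilon,
\]
which combines cleanly with $|x|^\alpha|y|^\beta \operatorname{div}\phi_\varepsilon = |x|^\alpha|y|^\beta(2\eta_\varepsilon + X\cdot\nabla\eta_\varepsilon)$ to supply the total coefficient $\alpha+\beta+2$. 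This is precisely the cancellation responsible for the \emph{absence} of any perturbation term analogous to $h_1(r)$ in Lemma 2.1.

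Passing to the limit $\varepsilon\to 0$, which at almost every $r$ converts the factor $X\cdot\nabla\eta_\varepsilon$ into $-r$ times surface measure on $\partial B_r$, yields the Pohozaev-type identity
\[
(\alpha+\beta+2)\int_{B_r}|x|^\alpha|y|^\beta \chi_{\{u>0\}}\, dX = r\int_{\partial B_r}|\nabla u|^2 d\mathcal{H}^1 - 2r\int_{\partial B_r}(\nabla u\cdot\nu)^2 d\mathcal{H}^1 + r\int_{\partial B_r}|x|^\alpha|y|^\beta\chi_{\{u>0\}} d\mathcal{H}^1.
\]
Combining this with $\int_{B_r}|\nabla u|^2 dX = \int_{\partial B_r} u\,\nabla u\cdot\nu\, d\mathcal{H}^1$ (from $\Delta u = 0$ on $\{u>0\}$ together with $u\nabla u = 0$ a.e.\ on $\{u=0\}$) and Lebesgue differentiation of $\int_{B_r}\!(|\nabla u|^2+|x|^\alpha|y|^\beta\chi)dX$, the boundary integrals of $|\nabla u|^2$ and of $|x|^\alpha|y|^\beta\chi$ cancel exactly, leaving
\[
\frac{dI_3}{dr} = 2r^{-2-\alpha-\beta}\int_{\partial B_r}(\nabla u\cdot\nu)^2 d\mathcal{H}^1 - (\alpha+\beta+2)r^{-3-\alpha-\beta}\int_{\partial B_r} u\,\nabla u\cdot\nu\, d\mathcal{H}^1.
\]
Assembling $\frac{dM_3}{dr} = \frac{dI_3}{dr} - \tfrac{\alpha+\beta+2}{2}\frac{dJ_3}{dr}$ then completes the square in $(\nabla u\cdot\nu,\, u/r)$ and produces the advertised formula.

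The main obstacle, beyond the bookkeeping in the Pohozaev identity, is the admissibility of $\phi_\varepsilon$ and the $\varepsilon\to 0$ limit: one needs the boundary traces $\int_{\partial B_r}|\nabla u|^2 d\mathcal{H}^1$, $\int_{\partial B_r}(\nabla u\cdot\nu)^2 d\mathcal{H}^1$, and the integration by parts on $\{u>0\}\cap B_r$ to be meaningful at almost every $r$. These follow from the local $W^{1,\infty}$ bound implied by Assumption A together with the $C^{2,\gamma}$ regularity of the topological free boundary away from $S^u$ in Definition 1.7(iii), in complete parallel with the treatment in \cite{VARVAR2011}; the genuinely new ingredient is only the Euler-homogeneity cancellation highlighted above.
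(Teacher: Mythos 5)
Your proposal is correct and follows essentially the same route the paper intends: the paper gives no separate proof of this lemma, deferring to the computation of Lemma 2.1 (itself resting on Theorem 3.5 of \cite{VARVAR2011}), and your decomposition $M_3=I_3-\tfrac{\alpha+\beta+2}{2}J_3$ together with the domain-variation/Pohozaev identity reproduces exactly that argument. Your identification of the Euler-homogeneity cancellation $X\cdot\nabla\bigl(|x|^{\alpha}|y|^{\beta}\bigr)=(\alpha+\beta)|x|^{\alpha}|y|^{\beta}$ about $X_0=(0,0)$ is precisely the reason no analogue of the perturbation term $h_1(r)$ appears in Case 3, and the bookkeeping you outline checks out.
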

	\begin{rem}
		It is important to recall that the decay rate \eqref{eq28} implies that for any small $r<r_0$,
		\begin{equation}
			\label{eq82}
			u(X)\leq C_3\left( |x|^{\frac{\alpha}{2}+1}|y|^{\frac{\beta}{2}}+|x|^{\frac{\alpha}{2}}|y|^{\frac{\beta}{2}+1}\right)
		\end{equation}
		holds	locally in $B_r(X_0)$ for a positive constant $C_3$.
	\end{rem}
	
	\begin{lemma}
		Suppose that  \eqref{eq82} holds, and let $u$ be a weak  solution of the free boundary problem \eqref{eq1}. For  $m\rightarrow\infty$,	let $r_m $ be a vanishing sequence of positive real numbers. 
		
		(i) (Blow-up sequence and blow-up limit) Suppose that the blow-up sequence $u_m(X)=r_m^{-\frac{\alpha+\beta+2}{2}}u(X_0+r_mX)$ converges weakly in $W_{\text{loc}}^{1,2}(\mathbb{R}^2)$ to a blow-up limit $u_0$. Then $u_0$ is a homogeneous function of $\frac{\alpha+\beta+2}{2}$ degree and $u_m$ converges strongly to $u_0$ in $W_{\text{loc}}^{1,2}(\mathbb{R}^2)$.

		(ii) (Weighted  density) The limit $M_3(0+)$ exists and the weighted density is  $$M_3(0+)=\lim\limits_{m      \rightarrow\infty}\int_{B_1} |x|^{\alpha}|y|^{\beta}\chi_{\{u_m>0\}}dX.$$
	\end{lemma}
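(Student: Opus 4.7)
The plan follows the strategy of Lemma 2.4, simplified by the absence of a perturbation term in the monotonicity formula of Lemma 2.8. Since $dM_3/dr \ge 0$ (the right-hand side is a perfect square) and $M_3$ is bounded below via the pointwise bound \eqref{eq82} together with $u(X_0)=0$, the limit $M_3(0+)$ exists and is finite. For the homogeneity of the blow-up limit, I would integrate the monotonicity identity over $r \in (r_m\rho, r_m\sigma)$ for $0<\rho<\sigma<\delta$, change variables by $Y = (X - X_0)/r_m$, and use the scaling $u_m(Y) = r_m^{-(\alpha+\beta+2)/2} u(X_0 + r_m Y)$ to arrive at
\begin{equation*}
2\int_{B_\sigma\setminus B_\rho}|Y|^{-\alpha-\beta-4}\left(\nabla u_m(Y)\cdot Y - \frac{\alpha+\beta+2}{2}u_m(Y)\right)^2 dY = M_3(r_m\sigma) - M_3(r_m\rho) \to 0.
\end{equation*}
Weak lower semicontinuity together with $u_m \rightharpoonup u_0$ in $W^{1,2}_{\text{loc}}$ then forces $\nabla u_0(X) \cdot X = \frac{\alpha+\beta+2}{2} u_0(X)$ a.e., which is the desired homogeneity of degree $(\alpha+\beta+2)/2$.

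Next, for the strong $W^{1,2}_{\text{loc}}$ convergence, I would mirror step (ii) of Lemma 2.4. The Bernstein bound \eqref{eq28} places $u_m \in W^{1,\infty}_{\text{loc}}$, so $u_m \to u_0$ in $L^2_{\text{loc}}$ by compact embedding, and $\Delta u_m = 0$ on $\{u_m > 0\}$ passes to the limit to give $\Delta u_0 = 0$ on $\{u_0 > 0\}$. Testing this harmonicity against $\eta\,\max(u_m - \varepsilon, 0)^{1+\varepsilon}$ (whose support lies inside $\{u_m > 0\}$) and letting $\varepsilon \to 0$ yields $\int |\nabla u_m|^2 \eta\, dX \to \int |\nabla u_0|^2 \eta\, dX$ for every $\eta \in C_0^1$; combined with the weak convergence of gradients, Proposition 3.32 in \cite{Brezis} gives strong $L^2_{\text{loc}}$ convergence along a subsequence.

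Once the strong convergence is in hand, part (ii) is obtained by direct expansion of $M_3(r_m)$ after rescaling,
\begin{equation*}
M_3(r_m) = \int_{B_1}|\nabla u_m|^2\, dX + \int_{B_1}|x|^\alpha|y|^\beta \chi_{\{u_m>0\}}\, dX - \frac{\alpha+\beta+2}{2}\int_{\partial B_1} u_m^2\, d\mathcal{H}^1.
\end{equation*}
Passing to the limit using strong convergence and the trace theorem, and then invoking the Pohozaev-type identity $\int_{B_1}|\nabla u_0|^2\, dX = \frac{\alpha+\beta+2}{2}\int_{\partial B_1}u_0^2\, d\mathcal{H}^1$ (a direct consequence of the established homogeneity) cancels the gradient and boundary terms, leaving precisely the claimed formula for $M_3(0+)$.

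The chief technical point will be the strong $L^2_{\text{loc}}$ convergence of the gradients, which underpins the passage to the limit in both the homogeneity identity and the density formula. It requires the $\max(u_m - \varepsilon, 0)^{1+\varepsilon}$ mollification trick so that the test function is supported in the harmonicity region $\{u_m > 0\}$, which allows one to bypass any quantitative control of the free boundary. Apart from this, the argument is in fact cleaner than its Case~1 and Case~2 analogues: because the gradient function $|x|^\alpha|y|^\beta$ vanishes at $X_0=(0,0)$, no perturbation term $h_3(r)$ appears in the monotonicity identity, and $M_3$ is genuinely non-decreasing rather than monotone only up to a controlled error.
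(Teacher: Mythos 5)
Your proposal is correct and follows exactly the route the paper takes: the paper's own proof of this lemma simply observes that $M_3'(r)\geq 0$ with $|M_3(r)|$ bounded gives the existence of $M_3(0+)$, and then defers the rest to ``the same method as Lemma 2.4,'' which is precisely the argument you have written out (integrated monotonicity identity for homogeneity, the $\max(u_m-\varepsilon,0)^{1+\varepsilon}$ mollification plus Proposition 3.32 of \cite{Brezis} for strong convergence, and the homogeneity identity to cancel the gradient and boundary terms in $M_3(r_m)$). Your observation that Case 3 is cleaner because the perturbation term is absent is also consistent with the paper's Lemma 2.8.
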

	\begin{proof}
		The existence of $M_3(0+)$ follows directly from that $M'_3(r)\geq0$ and $|M_3(r)|$ is bounded. The same method of Lemma 2.4 can be used to verify the  rest conclusion of this lemma, and we omit the proof here.
	\end{proof}
	
	\begin{prop}
		Let $u$ be a weak solution of the free boundary problem \eqref{eq1} satisfying \eqref{eq82}, then we have the following conclusion:
		
		The set of corner wave stagnation point is  $$S^u_{3,\text{corner}}=\left\lbrace X\in S^u\bigg|M_3(0+)=\int_{B_1\cap\left\lbrace \theta_1 <\theta<\theta_2\right\rbrace } |x|^{\alpha}|y|^{\beta}dX\right\rbrace ,$$ 
		where $\theta_1$ and $\theta_2$ are the  angles of the asymptotic direction of free boundaries intersect $x$-axis near the origin with $\theta_1<\theta_2$,  $\theta_2-\theta_1=\frac{2\pi}{\alpha+\beta+2}$ and satisfying the boundary condition of blow-up limit. For each pair of $\theta_1$ and $\theta_2$, the corresponding  blow-up limit is 
		\begin{equation*}
			\begin{aligned}
				u_0(r,\theta)
				=\dfrac{2|\cos\theta_1|^{\frac{\alpha}{2}}|\sin\theta_1|^{\frac{\beta}{2}}}{\alpha+\beta+2}r^\frac{\alpha+\beta+2}{2}  \cos\left( \frac{\alpha+\beta+2}{2}\theta+\arcsin\left|\cos\left( \frac{\alpha+\beta+2}{2}\theta_1\right)  \right| \right) \chi_{\left\lbrace\theta_1 <\theta<\theta_2\right\rbrace }.
			\end{aligned}
		\end{equation*}
		
		The set of cusp stagnation point and flat stagnation point are  $$S^u_{3,\text{cusp}}=\left\lbrace X\in S^u\big|M_3(0+)=0\right\rbrace$$ and  $$S^u_{3,\text{flat}}=\left\lbrace X\in S^u\Big|M_3(0+)=\int_{B_1} |x|^{\alpha}|y|^{\beta}dX\right\rbrace$$ respectively, and the corresponding  blow-up limit of the both is 
		$$u_0\equiv0.$$
		The corner wave profile see Fig. 10, the cusp profile see Fig. 13(a) and (b), and flat profile see Fig. 13(c).
		\begin{figure}[h]
			\includegraphics[width=140mm]{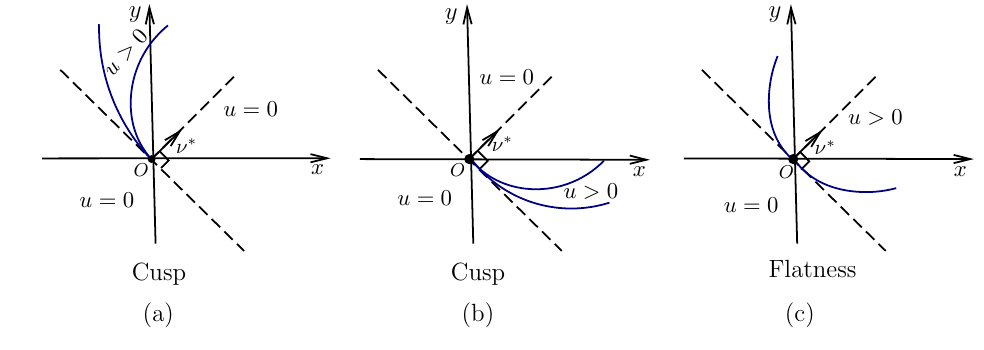}	
			\caption{Cusp point and flat point}
		\end{figure}
	\end{prop}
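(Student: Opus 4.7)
The plan is to adapt the strategy of Proposition 2.5 to the Type 3 stagnation point $X_0=(0,0)$, using the monotonicity formula from Lemma 2.9 and the homogeneity/strong-convergence result of Lemma 2.10. I first take a test field $\phi=(\phi_1,\phi_2)\in C^1_0(\mathbb{R}^2;\mathbb{R}^2)$ in the weak formulation \eqref{eq23} written for $u_m$, rescale by $r_m$, and let $m\to\infty$. Using the strong $W^{1,2}_{\text{loc}}$-convergence of $u_m\to u_0$ (Lemma 2.10(i)) and the $L^1_{\text{loc}}$-convergence $\chi_{\{u_m>0\}}\to\chi_0$ along a subsequence, and noting that the perturbation force term $\tfrac12\nabla h_0$ vanishes at the origin so that its rescaling produces an integrable factor converging pointwise to $\tfrac12\nabla(|x|^\alpha|y|^\beta)$, I obtain in the limit
\begin{equation*}
\begin{aligned}
0=\int_{\mathbb{R}^2}\!\!\bigl(|\nabla u_0|^2\operatorname{div}\phi-2\nabla u_0 D\phi\nabla u_0\bigr)dX+\int_{\mathbb{R}^2}\!\!|x|^{\alpha}|y|^{\beta}\operatorname{div}\phi\,\chi_0\,dX+\int_{\mathbb{R}^2}\!\!\bigl(\alpha|x|^{\alpha-1}|y|^\beta\operatorname{sgn}(x)\phi_1+\beta|x|^\alpha|y|^{\beta-1}\operatorname{sgn}(y)\phi_2\bigr)\chi_0\,dX.
\end{aligned}
\end{equation*}
As in \eqref{eq24}, $\chi_0\equiv 1$ on $\{u_0>0\}$.

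If $\{u_0>0\}=\varnothing$, then the second and third integrals above reduce to $\int\operatorname{div}(|x|^\alpha|y|^\beta\phi)\chi_0\,dX=0$, which forces $\chi_0$ to be constant on each connected component of the interior of $\{u_0=0\}$. Since the complement of the origin in $\mathbb{R}^2$ is connected, $\chi_0$ is a single constant, either $0$ or $1$. By Lemma 2.10(iii) this yields exactly the two alternatives $M_3(0+)=0$ (cusp) and $M_3(0+)=\int_{B_1}|x|^\alpha|y|^\beta\,dX$ (flat), with $u_0\equiv 0$ in both cases.

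Otherwise $\{u_0>0\}\neq\varnothing$. Lemma 2.10 gives that $u_0$ is homogeneous of degree $\tfrac{\alpha+\beta+2}{2}$, and passing $\Delta u_m=0$ to the limit gives $\Delta u_0=0$ on $\{u_0>0\}$. In polar coordinates, each connected component of $\{u_0>0\}$ must therefore be a cone $\{\theta_1<\theta<\theta_2\}$ with vertex at the origin, on which
\begin{equation*}
u_0(r,\theta)=C_0\,r^{(\alpha+\beta+2)/2}\cos\!\left(\tfrac{\alpha+\beta+2}{2}\theta+\varphi_0\right),\qquad \theta_2-\theta_1=\tfrac{2\pi}{\alpha+\beta+2},
\end{equation*}
with $\cos(\tfrac{\alpha+\beta+2}{2}\theta_i+\varphi_0)=0$ for $i=1,2$. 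Next, for $Z\in\partial\{u_0>0\}\setminus\{0\}$ I localize in a small ball $B_\omega(Z)$ disjoint from the origin, plug $\phi=\eta\nu(Z)$ with $\eta\in C^1_0(B_\omega(Z))$ into the limit identity, and use Hopf's lemma to get $|\nabla u_0|\neq 0$ on the free boundary away from the origin; this yields the pointwise gradient relation $|\nabla u_0|^2=|x|^\alpha|y|^\beta$ on $\partial\{u_0>0\}\setminus\{0\}$. Evaluating this at $\theta=\theta_i$, together with the Dirichlet vanishing making the relevant sine factor equal to $1$, produces the compatibility condition $C_0^2=\tfrac{4}{(\alpha+\beta+2)^2}|\cos\theta_i|^{\alpha}|\sin\theta_i|^{\beta}$ at both boundary rays, forcing $|\cos\theta_1|^\alpha|\sin\theta_1|^\beta=|\cos\theta_2|^\alpha|\sin\theta_2|^\beta$. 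Solving for $\varphi_0$ from $\cos(\tfrac{\alpha+\beta+2}{2}\theta_1+\varphi_0)=0$ and fixing the sign so that $u_0\geq 0$ on the cone gives $\varphi_0=\arcsin|\cos(\tfrac{\alpha+\beta+2}{2}\theta_1)|$, which yields the stated formula. The density formula then follows from Lemma 2.10(iii) once we show $\chi_0=\chi_{\{\theta_1<\theta<\theta_2\}}$, which uses the same divergence-constancy argument applied to each connected component of the complement of $\overline{\{u_0>0\}}$.

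The principal technical difficulty is that, unlike Cases 1 and 2, the gradient function $|x|^\alpha|y|^\beta$ is not radial along the $x$- or $y$-axis near the vertex, so the boundary compatibility does not single out a unique pair $(\theta_1,\theta_2)$; this is why the statement only asserts the existence of such angles rather than their explicit values, and why the cusp/flat exclusion must be handled separately in later sections. A secondary subtlety is justifying the passage to the limit of the force term in the weak formulation: one must observe that after rescaling, the integrand $\alpha|x|^{\alpha-1}|y|^\beta\operatorname{sgn}(x)\phi_1+\beta|x|^\alpha|y|^{\beta-1}\operatorname{sgn}(y)\phi_2$ picks up a factor $r_m$ from the chain rule but an inverse factor from the exponent $\kappa_3$, so the limiting identity retains exactly the same structural form as the original first variation with $u$ replaced by $u_0$ and $\chi_{\{u>0\}}$ replaced by $\chi_0$; this is precisely what enables the Hopf-lemma/gradient-condition step to go through verbatim.
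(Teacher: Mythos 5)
Your argument is essentially sound and reaches the same compatibility system as the paper, but by a more direct route. Where you read off $u_0=C_0r^{(\alpha+\beta+2)/2}\cos(\tfrac{\alpha+\beta+2}{2}\theta+\varphi_0)$ from homogeneity and harmonicity and then impose the Hopf-lemma/first-variation gradient relation on the two boundary rays, the paper instead introduces a velocity potential $\phi_0$, shows the associated angular profile solves a Tschebysheff equation $(1-z^2)g''-zg'+\tfrac{(\alpha+\beta+2)^2}{4}g=0$, and extracts the conditions \eqref{eq63} and \eqref{eq93} from $|\nabla u_0|=|\nabla\phi_0|$ and the Dirichlet condition. The two derivations produce the same system ($\theta_2-\theta_1=\tfrac{2\pi}{\alpha+\beta+2}$ together with $|\cos\theta_1|^{\alpha}|\sin\theta_1|^{\beta}=|\cos\theta_2|^{\alpha}|\sin\theta_2|^{\beta}$), so your shortcut loses nothing at the level of necessary conditions; what it omits is the paper's subsequent analysis of the zeros of $H(s)=|\cos A-s\sin A|^{\alpha}|\cos A+s^{-1}\sin A|^{\beta}-1$, which establishes that admissible pairs $(\theta_1,\theta_2)$ actually exist and enumerates them (eight or twelve, symmetric and non-symmetric, according to whether $\tan A\le\tfrac{2\sqrt{\alpha\beta}}{\alpha-\beta}$). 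Since the proposition asserts ``for each pair of $\theta_1$ and $\theta_2$'' satisfying the boundary condition, you should at least record that the system is solvable rather than take it on faith.

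Two smaller points. First, in the degenerate case your justification that $\chi_0$ is a single constant is off: the identity $\int\operatorname{div}(|x|^{\alpha}|y|^{\beta}\phi)\chi_0\,dX=0$ forces $\chi_0$ to be locally constant only where $\nabla(|x|^{\alpha}|y|^{\beta})\neq0$, i.e.\ on the complement of the coordinate axes, which has \emph{four} connected components (not the connected complement of the origin). A priori $\chi_0$ could take different values in different quadrants, so the clean dichotomy $M_3(0+)\in\{0,\int_{B_1}|x|^{\alpha}|y|^{\beta}dX\}$ needs an additional word; the paper is equally terse here, so this is a shared, minor imprecision rather than a fatal flaw. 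Second, your bookkeeping of the rescaled force term is correct (all terms in the first variation scale like $r_m^{\alpha+\beta+1}$ for Case 3, so the full gradient $\nabla(|x|^{\alpha}|y|^{\beta})$ survives in the limit), and this is exactly what makes the Hopf-lemma step deliver $|\nabla u_0|^2=|x|^{\alpha}|y|^{\beta}$ on $\partial\{u_0>0\}\setminus\{0\}$.
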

	\begin{proof}
		By suitable modification to the proof of Proposition 2.5, we can confirm the cases of cusp and flatness. Furthermore, under the assumption that $\{u_0>0\}$ is non-empty,
		\begin{equation}
			\label{eq61}
			u_0(r,\theta)=r^{\frac{\alpha+\beta+2}{2}}C_0\cos\left( \dfrac{\alpha+\beta+2}{2}\theta+\varphi_0\right) 
		\end{equation}
		holds,	and each connected component of $\{u_0>0\}$ is a cone with vertex at the origin and an  angle of $\frac{2\pi}{\alpha+\beta+2}$. 
		It follows from \eqref{eq61} and the boundary condition that
		\begin{equation}
			\label{eq62}
			\dfrac{(\alpha+\beta+2)^2}{4}C_0^2=|\cos\theta|^{\alpha}|\sin\theta|^{\beta},
		\end{equation}
		which holds on both boundaries of the cone.
		
		In each connected component of $\{u_0>0\}$, we introduce a velocity potential $\phi_0$ defined by 
		\begin{center}
			$\dfrac{\p\phi_0}{\p x}=-\dfrac{\p u_0}{\p y}$ and $\dfrac{\p\phi_0}{\p y}=\dfrac{\p u_0}{\p x}.$
		\end{center}
		Let $\psi_0(x_1,x_2,x_3)=\phi_0\left( \sqrt{x_1^2+x_2^2},x_3\right) $, through a direct calculation,
		we have in $\{u_0>0\}$,
		\begin{equation}
			\begin{aligned}
				\Delta\psi_0=\dfrac{1}{x}\dfrac{\p}{\p x}\left( x\dfrac{\p\phi_0}{\p x}\right) +\dfrac{1}{x^2}\dfrac{\p^2\phi_0}{\p\gamma^2}+\dfrac{\p^2\phi_0}{\p y^2}=\dfrac{1}{x}\dfrac{\p\phi_0}{\p x},
			\end{aligned}
		\end{equation}	
		where $x=\sqrt{x_1^2+x_2^2},y=x_3,\gamma=\arctan\frac{x_2}{x_1}$. This implies that $\psi_0(x_1,x_2,x_3)$ in $\{u_0>0\}$ satisfies 
		\begin{equation}
			\label{eq92}
			\dfrac{1}{r^2}\dfrac{\p}{\p r}\left( r^2\dfrac{\p\psi_0}{\p r}\right) +\dfrac{1}{r^2\cos\theta}\dfrac{\p}{\p \theta}\left( \cos\theta\dfrac{\p\psi_0}{\p \theta}\right) -\dfrac{1}{r}\dfrac{\p\phi_0}{\p r}+\dfrac{\sin\theta}{r^2\cos\theta}\dfrac{\p\phi_0}{\p \theta}=0,
		\end{equation}
		where $r=\sqrt{x_1^2+x_2^2+x_3^2}$ and $\theta=\arctan\frac{x_3}{\sqrt{x_1^2+x_2^2}}$.
		
		Since $\phi_0(r\cos\theta,r\sin\theta)$ is a homogeneous function of $\frac{\alpha+\beta+2}{2}$  degree,  we can rewrite  that 
		\begin{equation}
			\label{eq91}
			\phi_0(r\cos\theta,r\sin\theta)=r^{\frac{\alpha+\beta+2}{2}}g(\cos\theta),
		\end{equation}
		$g(\cos\theta)$ is a smooth function of $\cos\theta$, to be determined later. For ease of notations, setting $z=\cos\theta$.
		Inserting \eqref{eq91} into \eqref{eq92}, we compute that $g(z)$ satisfies the following Tschebysheff equation in $\{u_0>0\}$,
		$$(1-z^2)g''(z)-zg'(z)+\frac{(\alpha+\beta+2)^2}{4}g(z)=0.$$
		The solution of the Tschebysheff equation (see in \cite{ZJLS}) is
		$$g(\cos\theta)=a\cos\left( \frac{\alpha+\beta+2}{2}\theta\right) +b\sin\left( \frac{\alpha+\beta+2}{2}\theta\right) ,$$
		where $a$ and $b$ are the coefficients to be determined.	  
		
		Noticing that $|\nabla u_0|^2=|\nabla\phi_0|^2$, we have 	
		\begin{equation}
			\label{eq63}
			\begin{cases} 
				\left( \frac{\alpha+\beta+2}{2}\right) ^2(a^2+b^2)=|\cos\theta_1|^{\alpha}|\sin\theta_1|^{\beta},\\
				\left( \frac{\alpha+\beta+2}{2}\right) ^2(a^2+b^2)=|\cos\theta_2|^{\alpha}|\sin\theta_2|^{\beta}.
			\end{cases}
		\end{equation}
		It follows from  the homogeneity of $u_0$ that 
		\begin{equation}
			u_0(r\cos\theta,r\sin\theta)=r^{\frac{\alpha+\beta+2}{2}}f(\cos\theta),
		\end{equation}
		where $f(\cos\theta)$ is a smooth function of $\cos\theta$, to be determined later, which	together with \eqref{eq91} gives that
		$$\dfrac{\p\phi_0}{\p\theta}=r\dfrac{\p u_0}{\p r},$$
		the relationship of $f(\cos\theta)$ and $g(\cos\theta)$ can be easily derived  as follows
		$$ \dfrac{dg(\cos\theta)}{d\theta} =\frac{\alpha+\beta+2}{2}f(\cos\theta).$$
		
		Moreover, recalling the free boundary condition of $u_0$,
		$$u_0(r\cos\theta_i,r\sin\theta_i)=r^{\frac{\alpha+\beta+2}{2}}f(\cos\theta_i)=0,\,i=1,2,$$
		thus we deduce that
		$$\dfrac{dg(\cos\theta)}{d\theta}\bigg| _{\theta=\theta_i}=0,$$
		a direct computation yields that
		\begin{equation}
			\label{eq93}
			a\sin\left( \frac{\alpha+\beta+2}{2}\theta_i\right) -b\cos\left( \frac{\alpha+\beta+2}{2}\theta_i\right)=0,\,\text{for}\,i=1,2.
		\end{equation}	
		
		All in all,  $\theta_i$	solves the equations \eqref{eq63} and \eqref{eq93} for $ i=1,2$. For simplicity of presentation, we define $A=\frac{2\pi}{\alpha+\beta+2}$ and the function $	H(\tan\theta)$ as 	\begin{equation}
			\label{eq90}
			H(\tan\theta)=\left| \cos A-\tan\theta\sin A\right| ^{\alpha}\left| \cos A+\dfrac{1}{\tan\theta}\sin A\right| ^{\beta}-1,
		\end{equation}
		solving for $\theta_1$ and $\theta_2$ in \eqref{eq63} and \eqref{eq93} is equivalent to solving for the zeros of the function $H$. By conveniently denoting $s(\theta)=\tan\theta$, we now write $H(\tan\theta)$ as 
		\begin{equation*}
			H(s)=\left| \cos A-s\sin A\right| ^{\alpha}\left| \cos A+\dfrac{1}{s}\sin A\right| ^{\beta}-1.
		\end{equation*}	
		Obviously, $H(s)\rightarrow+\infty$ if $s\rightarrow0$ or $\infty$. Besides $H(\frac{1}{\tan A})=-1$ and $H(-\tan A)=-1$. 
		The zeros of function \eqref{eq90} are proved via the monotonic properties of $H(s)$. The situation of the zeros is related to the size of $\alpha$ and $\beta$. Here we illustrate the case of $\alpha\geq\beta$ as an example, and the result of $\alpha<\beta$ is analogous. 
		We can summarize the results as follows. 
		
		If $0<\tan A\leq\frac{2\sqrt{\alpha\beta}}{\alpha-\beta}$, the graph of $H(s)$ is illustrated in  Fig. 14(a), 
		if $\tan A>\frac{2\sqrt{\alpha\beta}}{\alpha-\beta}$, the graph of $H(s)$ is illustrated in  Fig. 14(b).
		\begin{figure}[h]
			\includegraphics[width=130mm]{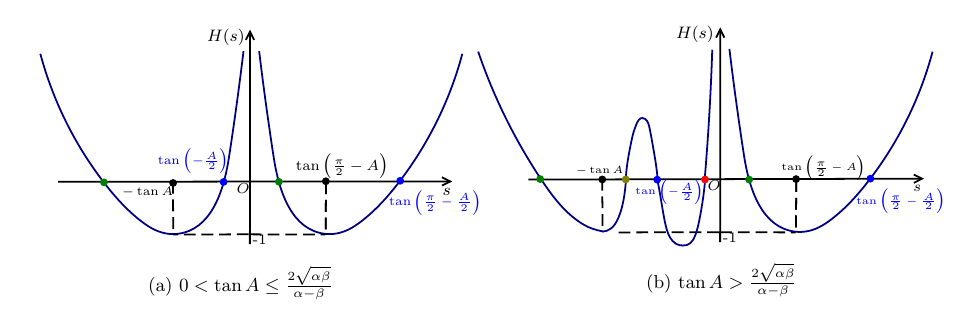}	
			\caption{The graph of $H(s)$ for $\alpha\geq\beta$}
		\end{figure}
		
		The zeros of $H(s)$ can imply the solutions of $\theta_1$ in \eqref{eq63}. 
		If $0<\tan A\leq\frac{2\sqrt{\alpha\beta}}{\alpha-\beta}$, there are eight pairs of solutions $(\theta_1,\theta_2)$. Furthermore, four pairs of solutions are symmetric about the coordinate axes, see Fig. 15, the other four pairs of them are non-symmetric, see Fig. 16.
		\begin{figure}[h]
			\includegraphics[width=137mm]{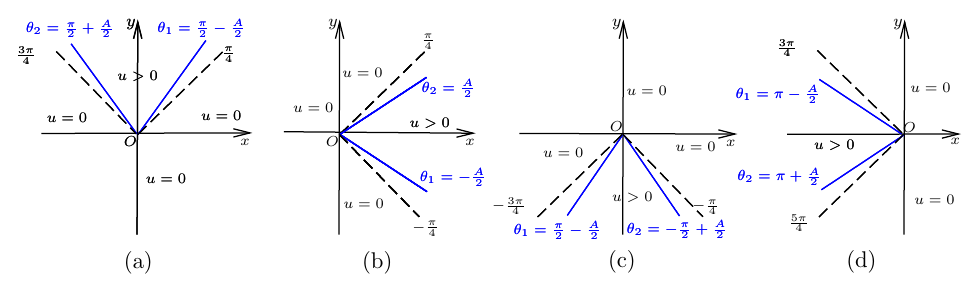}	
			\caption{The symmetric solutions  for $0<\tan A\leq\frac{2\sqrt{\alpha\beta}}{\alpha-\beta}$ ($\alpha\geq\beta$)}
		\end{figure}
		
		\begin{figure}[h]
			\includegraphics[width=137mm]{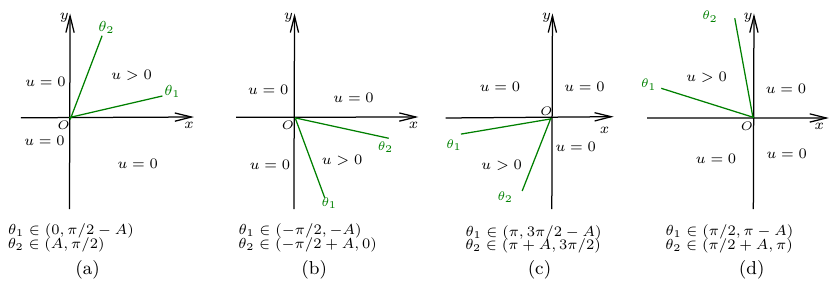}	
			\caption{The non-symmetric solutions for $0<\tan A\leq\frac{2\sqrt{\alpha\beta}}{\alpha-\beta}$ ($\alpha\geq\beta$)}
		\end{figure}
		
		If $\tan A>\frac{2\sqrt{\alpha\beta}}{\alpha-\beta}$, except for the solutions mentioned in Fig. 15 and Fig. 16, there are other four pairs of non-symmetric solutions, see Fig. 17.
		\begin{figure}[h]
		\includegraphics[width=150mm]{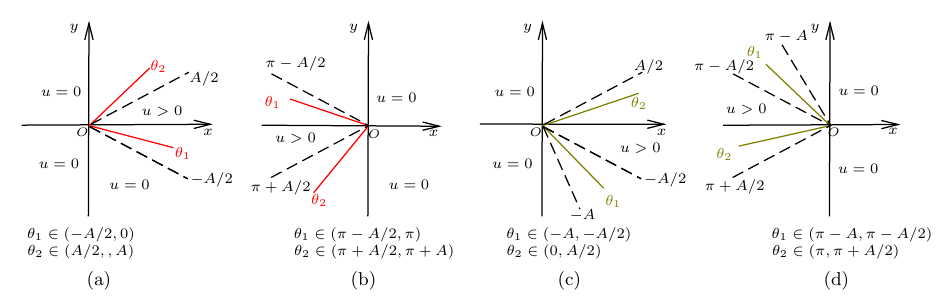}	
			\caption{ Additional non-symmetric solutions for $\tan A>\frac{2\sqrt{\alpha\beta}}{\alpha-\beta}$ ($\alpha\geq\beta$)}
		\end{figure}
		
		Naturally, we can decide the coefficients $C_0$ and $\varphi_0$ by $a$ and $b$. Based on the above analysis, the expression of $u_0$ and  the corresponding weighted density can be concluded.
	\end{proof}
	\begin{rem}
		Since the force is degenerate at  $X_0=(0,0)$, the direction of \textbf{\textit{f}}  cannot be determined in this case. Asymptotic directions of the free boundary cannot be determined.
	\end{rem}

	\section{Non-existence of cusp profile}
	In the next proposition, we can show that, for the degenerate weighted density case $M_i(0+)\equiv0$ $(i=1,2,3)$, the cusp phenomenon does not happen.  Without loss of generality, we pick the most complex Case 3 for the proof, as two variables are degenerate for this case. Case 1 and Case 2 are similar, and we omit it here.
	
	\begin{prop} For Type 3 stagnation point $X_0=(0,0)$,
		let $u$ be a weak solution of \eqref{eq1} and suppose that 
		\begin{equation}
			\label{eq73}
			|\nabla u|^2\leq  |x|^{\alpha}|y|^{\beta}
		\end{equation}
		in $B_r(O)$, for some small $r>0$. Then the set of $S^u_{3,\text{cusp}}$ is empty.
	\end{prop}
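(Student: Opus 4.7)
The plan is to argue by contradiction, building on the blow-up and monotonicity machinery from Section 2. Assume $(0,0)\in S^u_{3,\text{cusp}}$. By Proposition 2.11, $M_3(0+)=0$, the blow-up limit is trivial ($u_0\equiv 0$), and for any vanishing sequence $r_m$ the rescaled weighted density satisfies
\begin{equation*}
\lim_{m\to\infty}\int_{B_1}|x|^\alpha|y|^\beta\chi_{\{u_m>0\}}\,dX=0,
\end{equation*}
while Lemma 2.10 gives strong $W^{1,2}_{\text{loc}}(\mathbb{R}^2)$ convergence $u_m\to 0$. On the other hand, Assumption B guarantees that $\{u>0\}$ is nonempty in every neighborhood of the origin and lies on one side of a continuous injective curve $\sigma$ through $(0,0)$.

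First I would derive a Pohozaev-type integral identity by testing the first variation \eqref{eq23} against $\phi(X)=X\,\eta(|X|)$, where $\eta$ is a smooth radial cutoff supported in $B_r$. Using $\operatorname{div}\phi=2\eta+|X|\eta'$, $\nabla u\,D\phi\,\nabla u=\eta|\nabla u|^2+(X\cdot\nabla u)^2\eta'/|X|$, and the cancellation $\alpha|x|^{\alpha-1}|y|^\beta(x\eta)\operatorname{sgn}(x)+\beta|x|^\alpha|y|^{\beta-1}(y\eta)\operatorname{sgn}(y)=(\alpha+\beta)|x|^\alpha|y|^\beta\eta$ coming from $\phi=(x\eta,y\eta)$, one obtains
\begin{equation*}
(\alpha+\beta+2)\int_{B_r}|x|^\alpha|y|^\beta\eta\,\chi_{\{u>0\}}\,dX = -\int_{B_r}\!\left(|\nabla u|^2+|x|^\alpha|y|^\beta\chi_{\{u>0\}}-2\frac{(X\cdot\nabla u)^2}{|X|^2}\right)\!|X|\,\eta'\,dX.
\end{equation*}
Rescaling at $r=r_m$ shows that the left-hand side tends to $0$ by the cusp hypothesis, so the identity alone does not directly close the argument.

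The remaining step is a non-degeneracy lower bound. Since $(0,0)$ is a stagnation point on $\partial\{u>0\}$ and $\partial\{u>0\}$ is a curve through the origin, I would choose the direction $\nu^*$ prescribed in Case 3 along which $|x|^\alpha|y|^\beta$ is strictly positive at unit distance, and use an Alt--Caffarelli style non-degeneracy for $u_m$, adapted to the polynomial weight and localized near $\nu^*$. This should produce a ball $B_\rho(Y_m)\subset\{u_m>0\}$ with $|Y_m|$ comparable to $1$ and $\rho$ bounded below, which forces
\begin{equation*}
\int_{B_1}|x|^\alpha|y|^\beta\chi_{\{u_m>0\}}\,dX\geq c>0
\end{equation*}
uniformly in $m$ and contradicts the cusp hypothesis.

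The main obstacle is establishing the non-degeneracy at the stagnation point itself: the classical Alt--Caffarelli bound needs a non-vanishing boundary weight, whereas $|x|^\alpha|y|^\beta$ degenerates along both coordinate axes. The planned remedy is to work at unit scale after rescaling and restrict attention to a cone around $\nu^*$, where the weight is bounded below and a variant of McCurdy's non-degeneracy for polynomial distance weights should apply, closing the contradiction.
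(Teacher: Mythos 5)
Your overall strategy --- contradiction via the blow-up limit $u_0\equiv 0$ and the vanishing of the weighted density --- starts in the right place, and your Pohozaev identity is computed correctly, but as you yourself concede it does not close the argument, and the step you lean on to finish, an ``Alt--Caffarelli style non-degeneracy'' producing a ball $B_\rho(Y_m)\subset\{u_m>0\}$ of definite size at unit distance from the origin, is a genuine gap rather than a deferrable technicality. The weak solutions of \eqref{eq1} in this paper are only stationary points of the functional (the first variation \eqref{eq23} vanishes); they are not assumed to be minimizers, and Alt--Caffarelli non-degeneracy is exactly the property that is \emph{not} available for merely stationary solutions and that can fail at a cusp. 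Such a ball, sitting in a cone about $\nu^*$ where the weight is bounded below, would by itself force $\int_{B_1}|x|^\alpha|y|^\beta\chi_{\{u_m>0\}}\,dX\ge c>0$, i.e.\ your non-degeneracy claim is essentially equivalent to the proposition you are trying to prove, so invoking it is close to circular. Restricting to a cone about $\nu^*$ also does not neutralize the degeneracy: a putative cusp can hug one of the coordinate axes, where $|x|^\alpha|y|^\beta$ vanishes, and McCurdy's results concern minimizers with distance-type weights $\operatorname{dist}(\cdot,\Gamma)^\lambda$ --- the introduction of this paper explicitly notes that $|x|^\alpha|y|^\beta$ does not satisfy that structural assumption. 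Without a proof of the non-degeneracy step the proposal does not reach a contradiction.

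The paper avoids non-degeneracy altogether, and the missing ingredient in your outline is a flux/connectivity argument rather than a density lower bound. It uses that $\Delta u_m$ is a non-negative Radon measure with $\Delta u_m\ge \sqrt{|x|^\alpha|y|^\beta}\,\mathcal{H}^1\llcorner\partial_{red}\{u_m>0\}$, while $\Delta u_m(B_2)\to 0$ by the strong convergence $u_m\to 0$. It then selects, via the maximum principle, a connected component $V_m$ of $\{u_m>0\}$ joining the origin to the boundary of a fixed rotated rectangle $A^*$, and distinguishes two cases according to whether $\zeta_m=\max_{V_m\cap\partial A^*}|x|^\alpha|y|^\beta$ stays bounded away from zero (which contradicts $\Delta u_m(B_2)\to0$) or tends to zero, in which case the divergence theorem on $V_m\cap A^*$, the free boundary condition $\nabla u_m\cdot\nu=-\sqrt{|x|^\alpha|y|^\beta}$ on $\partial V_m\cap A^*$, and the Bernstein bound \eqref{eq73} on $V_m\cap\partial A^*$ combine to give the contradiction. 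Your Pohozaev identity is not used at all in this route; if you keep your framework, you still need to supply the connectivity and flux estimate that replaces non-degeneracy.
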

	\begin{proof}
		Suppose towards a contradiction that $X_0=(0,0)\in S^u_{3,\text{cusp}}$, and
		the blow-up sequence $u_m(X)=r_m^{-\frac{\alpha+\beta+2}{2}}u(r_mX)$ converges strongly in $W_{\text{loc}}^{1,2}(\mathbb{R}^2)$ to a blow-up limit $u_0$. The corresponding blow-up limit of $M_3(0+)=0$ is  $u_0\equiv0$ in $\mathbb{R}^2$, as reasoned above, we have that
		\begin{equation}
			\label{eq14}
			\Delta u_m(B_2)\rightarrow0\,\,\text{as}\,m\rightarrow\infty,
		\end{equation}
		and
		\begin{equation}
			\label{eq95}
			\Delta u_m(B_2)\geq\int_{B_2\cap\p_{red}\{u_m>0\}}\sqrt{|x|^{\alpha}|y|^{\beta}}d\mathcal{H}^1,
		\end{equation}
		where $\Delta u$ is a non-negative Radon measure in $\Omega$ and $ \p_{red} $ is the reduced boundary.
		
		Moreover, we denote that $A=(-1,1)\times(0,1)$, and define a new rectangle $A^*$ as follows,
	$$	A^*= \begin{pmatrix}
			\cos\theta^* & -\sin\theta^* \\
			\sin\theta^* & \cos\theta^* 
		\end{pmatrix}\begin{pmatrix}
		x \\
	y 
	\end{pmatrix},$$
where $(x,y)\in A$.		
		 Based on the maximum principle, there is at least one connected component $V_m$ of $\{u_m>0\}$, such that for any $m$, $V_m$ contains the origin and some points of $ \p A^* $. 
		
		At first, we define 
		$$\zeta_m(x,y)=\max\left\lbrace |x |^{\alpha}|y|^{\beta}|X=(x,y)\in V_m\cap\p A^*\right\rbrace.$$
		As $m\rightarrow\infty$, if $\zeta_m(x,y)$ is bounded away from zero,  a contradiction to \eqref{eq14} and \eqref{eq95} easily follows from the convergence. In fact, if not, when $\zeta_m(x,y)$ converges to zero, we now calculate $\Delta u_m$ in the region $V_m\cap A^*$,
		\begin{equation*}
			\begin{aligned}
				0=&\Delta u_m(V_m\cap A^*)\\
				=&\int_{\p V_m\cap A^*}\nabla u_m\cdot\nu d\mathcal{H}^1+\int_{V_m\cap \p A^*}\nabla u_m\cdot\nu d\mathcal{H}^1,
			\end{aligned}
		\end{equation*}
		together with the boundary condition and \eqref{eq73}, we get that 
		\begin{equation*}
			\begin{aligned}
				0\leq\int_{\p V_m\cap A^*}\left( -\sqrt{|x|^{\alpha}|y|^{\beta}}\right)d\mathcal{H}^1 +\int_{V_m\cap \p A^*}\sqrt{|x|^{\alpha}|y|^{\beta}}d\mathcal{H}^1.
			\end{aligned}
		\end{equation*}
		The last inequality is a contradiction, which concludes the proof.
	\end{proof}
	
	\section{Non-existence of flat profile}
	In this section, we give the detailed analysis for the non-existence of flatness. The main method we used is the frequency formula, this is a classical result, which is first studied by F. Almgren  in \cite{Alm}. However, the gradient function we study here has two variables, the direct frequency formula in \cite{VARVAR2011} cannot be directly applied. It is worth noting that we exclude flat point by perturbation, which is referred in \cite{VARVAR2014}. Accordingly, we have to use remainder term in monotonicity formula to perturbate the frequency, more details will be shown in this section.
	Unless otherwise specified, we take $u$ be a weak solution of \eqref{eq1}.
	
	At first, we investigate Subcase 1.1,
	recalling that $$S^u_{1,\text{flat}}:=\left\lbrace X_0=(x_0,0)\in S^u\Big|M_1(0+)=(-x_0)^{\alpha}\int_{B_1} (y_-)^{\beta}dX\right\rbrace$$
	as the set of flat stagnation points in Subcase 1.1.
	
	\begin{rem}
		We have to point out that the set $S^u_{1,\text{flat}}$ is closed due to the fact that the function $ M_1(0+)$ is upper semicontinuous about $X$.
	\end{rem}
	\begin{lemma}
		Let $X_0$ be a point of the closed set $S^u_{1,\text{flat}}$,  and for some $\widetilde{r_0}\in(0,\delta)$ sufficiently small, we define for $r\in(0,\widetilde{r_0})$
		$$D(r)=r\dfrac{\int_{B_{r}(X_0)}|\nabla u|^2dX}{\int_{\p B_{r}(X_0)}u^2d\mathcal{H}^1},$$
		and 
		\begin{equation}
			\label{eq72}
			V(r)=r\dfrac{\int_{B_{r}(X_0)} \left[ \left( -x_0\right) ^{\alpha}\left( y_-\right) ^{\beta}-(-x)^{\alpha}(-y)^{\beta}\chi_{\{u>0\}}\right]  dX+r^{2+\beta}\int_{0}^{r}h_1(t)dt}{\int_{\p B_{r}(X_0)}u^2d\mathcal{H}^1}.
		\end{equation}
		And we define the frequency function
		$$H(r)=D(r)-V(r).$$
		Then some properties of $V(r)$ and the frequency function $H(r)$ hold as follows.
		
		(i)  The frequency function satisfies $H(r) \geq \frac{\beta}{2}+1$ for all $r\in(0,\widetilde{r_0})$;
		
		(ii) the function $\frac{1}{r}V^2(r)$ is integrable  with respect to $r$ when  $r\in (0,\widetilde{r_0})$;
		
		(iii) the limit of the frequency function $\lim\limits_{r\rightarrow0+}H(r)$ exists, denoted as  $H(0+)$. Obviously, we know that $H(0+)\geq\frac{\beta}{2}+1$.
	\end{lemma}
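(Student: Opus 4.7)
The plan is to first establish (i) directly from the Weiss monotonicity formula of Lemma 2.1, and then to reduce both (ii) and (iii) to careful estimation of $V(r)$.

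For (i), the key observation is the identity
\[
M_1(r) - M_1(0+) - \int_0^r h_1(s)\,ds = \frac{L(r)}{r^{3+\beta}}\Bigl(H(r) - \tfrac{\beta+2}{2}\Bigr),
\]
where $L(r) := \int_{\partial B_r(X_0)}u^2\,d\mathcal{H}^1$. This is obtained by writing $M_1(0+)=(-x_0)^{\alpha}\int_{B_1}(y_-)^\beta\,dX = r^{-(2+\beta)}\int_{B_r(X_0)}(-x_0)^\alpha(y_-)^\beta\,dX$ (using scale invariance, valid because $(y_-)^\beta$ is independent of $x$), subtracting this from $M_1(r)$, and rearranging the result against the definitions of $D(r)$ and $V(r)$. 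The left-hand side is non-negative because $M_1-\int_0^{\,\cdot} h_1$ is monotone non-decreasing (by \eqref{eq2}) with limit $M_1(0+)$ as $r\to 0^+$ (by Lemma 2.2). Since $L(r) \geq 0$, we conclude $H(r) \geq (\beta+2)/2$.

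For (ii), I estimate the numerator
\[
A(r) := \int_{B_r(X_0)}\bigl[(-x_0)^\alpha(y_-)^\beta - (-x)^\alpha(-y)^\beta \chi_{\{u>0\}}\bigr]\,dX + r^{2+\beta}\int_0^r h_1(s)\,ds
\]
of $L(r)V(r)/r$. Splitting the first integral into the regions $\{u>0\}$, $\{u=0\}\cap\{y<0\}$, and $\{y\geq 0\}$ (the last contributing zero since $u\equiv 0$ there by Definition 1.7 with $\theta_0 = 3\pi/2$), the $\{u>0\}$ piece has integrand $[(-x_0)^\alpha - (-x)^\alpha](-y)^\beta = O(r)(y_-)^\beta$ by Lipschitz continuity of $(-x)^\alpha$ around $x_0 \neq 0$, contributing $O(r^{\beta+3})$. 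Similarly, the $r^{2+\beta}\int_0^r h_1$ term is $O(r^{\beta+3})$ since $|h_1|$ is bounded (directly from \eqref{eq69} and the definition \eqref{eq98}). The ``missing-mass'' contribution on $\{u=0\}\cap\{y<0\}$ is controlled via the identity from the first paragraph, which translates the Weiss energy surplus into a bound on $A(r)^2/L(r)^2$. Combining these estimates with the lower bound on $L(r)$ inherited from $H(r)\geq (\beta+2)/2$, we obtain integrability of $V^2(r)/r$ on $(0,\widetilde{r_0})$.

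For (iii), I perform an Almgren-type differentiation on $H(r)$. Using $L'(r) = L(r)/r + 2K(r)$ (Green's formula in $\{u>0\}$, where $\Delta u = 0$, with $K(r)=\int_{B_r(X_0)}|\nabla u|^2\,dX$), together with the monotonicity formula for $M_1$, I derive an almost-monotonicity inequality of the form
\[
\frac{d}{dr}H(r) \geq -C\,\frac{V^2(r)}{r} + (\text{non-negative terms}),
\]
which, combined with (ii), shows that $H(r)$ is a Cauchy family as $r\to 0^+$; hence $H(0+)$ exists. Passing to the limit in (i) gives $H(0+) \geq (\beta+2)/2$. The hardest step is this almost-monotonicity derivation: the non-standard Bernoulli gradient condition produces an extra boundary flux, and the additive remainder $\int_0^r h_1$ in the monotonicity formula must be balanced against the $V(r)$ perturbation. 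This is precisely why $V(r)$ is \emph{defined} to include the $r^{2+\beta}\int_0^r h_1(s)\,ds$ piece: that specific combination is what makes the perturbed frequency amenable to an Almgren-type argument.
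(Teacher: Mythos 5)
Part (i) of your argument is correct and is essentially the paper's: the identity
$M_1(r)-M_1(0+)-\int_0^r h_1(s)\,ds=r^{-3-\beta}L(r)\bigl(H(r)-\tfrac{\beta+2}{2}\bigr)$,
combined with the monotonicity of $M_1-\int_0^{\,\cdot}h_1$ and Lemma 2.2, gives $H(r)\ge\tfrac{\beta}{2}+1$. The genuine gap is in (ii). Your direct estimate of the numerator $A(r)$ cannot control the ``missing-mass'' piece $\int_{B_r(X_0)}(y_-)^{\beta}(1-\chi_{\{u>0\}})\,dX$: at a flat point one only knows $\chi_{\{u_m>0\}}\to1$ in $L^1_{\mathrm{loc}}$ with no rate, and the denominator $L(r)=\int_{\partial B_r(X_0)}u^2\,d\mathcal{H}^1$ admits no useful lower bound (on the contrary, $r^{-3-\beta}L(r)$ is non-decreasing, so $L(r)\le Cr^{3+\beta}$, and at a flat point $u$ may vanish to high order). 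Moreover, the identity from your first paragraph only bounds $D(r)-V(r)$ from below; it gives no bound on $V(r)$ or on $A(r)^2/L(r)^2$ by itself, so the claimed ``translation of the Weiss energy surplus into a bound on $A(r)^2/L(r)^2$'' is circular. The paper does not estimate $V$ directly at all: it computes the exact derivative
\begin{equation*}
\frac{dH(r)}{dr}=\frac{2}{r}\left[\frac{\int_{\partial B_r(X_0)}\bigl(r\,\nabla u\cdot\nu-D(r)u\bigr)^2d\mathcal{H}^1}{\int_{\partial B_r(X_0)}u^2d\mathcal{H}^1}+V^2(r)+V(r)\Bigl(H(r)-\bigl(\tfrac{\beta}{2}+1\bigr)\Bigr)\right],
\end{equation*}
verifies $V(r)\ge0$ via the decomposition $V=(V_1+V_2)/\int_{\partial B_r}u^2$ with $V_1\ge0$, and concludes $H'(r)\ge\frac{2}{r}V^2(r)\ge0$. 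Then (iii) follows because $H$ is monotone and bounded below by (i), and (ii) follows by integrating $\frac{2}{s}V^2(s)\le H'(s)$ against the uniform bound $H(\widetilde{r_0})-H(\epsilon)\le H(\widetilde{r_0})-(\tfrac{\beta}{2}+1)$.

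Your treatment of (iii) compounds the problem: the $V^2$-term in the derivative identity enters with a \emph{positive} sign (this is precisely what makes the argument work), whereas you posit an almost-monotonicity inequality with $-C\,V^2(r)/r$ and then invoke (ii), which you have not established, to run a Cauchy argument. You also never address the sign of $V$, which is needed so that the cross term $V(r)(H(r)-(\tfrac{\beta}{2}+1))$ is non-negative; this requires its own argument (the paper isolates $V_1\ge0$ and absorbs $V_2$ for $r$ small). The logical order must therefore be reversed: derive the exact derivative identity for $H$ first; both (ii) and (iii) then fall out of it, rather than (iii) depending on an independently proved (ii).
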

	\begin{proof}
		We first write $H(r)$ as  
		\begin{equation}
			\label{eq71}
			\begin{aligned}
				H(r)=\dfrac{I_{*}(r)-(-x_0)^{\alpha}\int_{B_1}(y_-)^{\beta}dX}{J_1(r)},
			\end{aligned}
		\end{equation}
		where
		$$I_{*}(r)=I_1(r)-\int_{0}^{r}h_1(t)dt.$$
		
		In order to get the conclusion (i), it is equivalent to the following inequality
		\begin{equation*}
			I_{*}(r)-\left( \dfrac{\beta}{2}+1\right) J_1(r)-r^{-2-\beta}(-x_0)^{\alpha}\int_{B_{r}(X_0)}(y_-)^{\beta}dX\geq0,
		\end{equation*}
		which follows from the direct calculation,
		\begin{equation}
			\label{eq70}
			\begin{aligned}
				&I_{*}(r)-\left( \dfrac{\beta}{2}+1\right) J_1(r)-r^{-2-\beta}(-x_0)^{\alpha}\int_{B_{r}(X_0)}(y_-)^{\beta}dX\\
				=&M_1(r)-M_1(0+)-\int_{0}^{r}h_1(t)dt\\
				=&\int_{0}^{r}\dfrac{dM_1(t)}{dt}dt-\int_{0}^{r}h_1(t)dt\\
				\geq&0.
			\end{aligned}
		\end{equation}
		The above inequality holds since monotonicity formula \eqref{eq2} shows us that the third term of \eqref{eq70} is a perfect square term. Hence, we obtain the statement (i).

		Next together with \eqref{eq76}, we calculate the derivative of $H(r)$ with \eqref{eq71},
		\begin{equation*}
			\begin{aligned}
				\dfrac{	dH(r)}{dr}
				=&\dfrac{I'_1(r)-h_1(r)}{J_1(r)}-\left( D(r)-V(r)\right)\cdot\dfrac{J'_1(r)}{J_1(r)}\\
				=&\dfrac{2r^{-2-\beta}\int_{\p B_{r}(X_0)}(\nabla u\cdot \nu)^2d\mathcal{H}^{1}-(\beta+2)r^{-3-\beta}\int_{\p B_{r}(X_0)}u\nabla u\cdot \nu d\mathcal{H}^{1}}{r^{-3-\beta}\int_{\p B_{r}(X_0)}u^2d\mathcal{H}^1}\\
				&-\left( D(r)-V(r)\right) \dfrac{2r^{-3-\beta}\int_{\p B_{r}(X_0)}u\nabla u\cdot\nu d\mathcal{H}^{1}-(\beta+2)r^{-4-\beta}\int_{\p B_{r}(X_0)}u^{2}d\mathcal{H}^{1}}{r^{-3-\beta}\int_{\p B_{r}(X_0)}u^{2}d\mathcal{H}^{1}}.
			\end{aligned}
		\end{equation*}
		By rearranging the above equation, it follows that
		\begin{equation}
			\begin{aligned}
				\label{eq47}
				\dfrac{dH(r)}{dr}
				=\dfrac{2}{r}\Bigg[& \dfrac{\int_{\p B_{r}(X_0)}\left( r(\nabla u\cdot\nu)-D(r)u\right) ^2d\mathcal{H}^1}{\int_{\p B_{r}(X_0)}u^2d\mathcal{H}^1}
				+V^2(r)+V(r)\left( H(r)-\left( \dfrac{\beta}{2}+1\right) \right) \Bigg]\\
				=\dfrac{2}{r}\Bigg[& \dfrac{\int_{\p B_{r}(X_0)}\left( r(\nabla u\cdot\nu)-H(r)u\right)^2d\mathcal{H}^1}{\int_{\p B_{r}(X_0)}u^2d\mathcal{H}^1}+V(r)\left( H(r)-\left( \dfrac{\beta}{2}+1\right) \right) \Bigg].
			\end{aligned}
		\end{equation}
		Moreover, $V(r)$ can be rewritten as
		\begin{equation*}
			\begin{aligned}
				V(r)
				=\dfrac{V_1(r)+V_2(r)}{\int_{\p B_{r}(X_0)}u^2d\mathcal{H}^1},
			\end{aligned}
		\end{equation*}
		where
		\begin{equation}
			\label{eq96}
			V_1(r)=	 \left( -x_0\right) ^{\alpha}r\int_{B_{r}(X_0)}\left( y_-\right) ^{\beta}(1-\chi_{\{u>0\}})dX,
		\end{equation}
		and 
		\begin{equation}
			\label{eq97}
			V_2(r)=	r\int_{B_{r}(X_0)} \left[ \left( -x_0\right) ^{\alpha}-(-x)^{\alpha}\right] (-y)^{\beta}\chi_{\{u>0\}}dX+r^{3+\beta}\int_{0}^{r}h_1(t)dt.
		\end{equation}
	 As $r<\widetilde{r_0}$, it follows from $V_1(r)\geq0$ that $V(r)\geq0$.
		
		In other words, equation \eqref{eq47} also implies that
		$\frac{2}{r}V^2(r)\leq H'(r),$ the conclusion (ii) and (iii) can be derived together with the fact that $H(r)$ is bounded below as $r\rightarrow0+$, which concludes the proof.
	\end{proof}

	The frequency formula allows us to pass to blow-up limit.
	\begin{prop}
		Let $X_0\in S^u_{1,\text{flat}}$ and  suppose that the weak solution satisfies \eqref{eq69}. Then
		
		(i) $\lim\limits_{r\rightarrow0+}V(r)=0$ and $\lim\limits_{r\rightarrow0+}D(r)=H(0+)$;
		
		(ii) for any infinitesimal sequence $r_m$, where  $m\rightarrow\infty$, we define $v_m$ as 
		\begin{equation}
			\label{eq16}
			v_m(X):=\dfrac{u(X_0+r_mX)}{\sqrt{r_m^{-1}\int_{\p B_{r_m}(X_0)}u^2d\mathcal{H}^1}},
		\end{equation} 
		the sequence $v_m$ 	is bounded in $W^{1,2}(B_1)$;
		
		(iii) suppose that the sequence $v_m$ defined in \eqref{eq16} converges weakly in $W^{1,2}(B_1)$ to a blow-up limit $v_0$, then the function $v_0$ is continuous and is a homogeneous function of $H(0+)$ degree in $B_1$, and satisfies $v_0\geq0$ in $B_1$, $v_0\equiv0$ in $B_1^+$ where $B_1^+=B_1\cap\{X=(x,y)|y\geq0\}$, and $\int_{\p B_1}v^2_0d\mathcal{H}^1=1$.
	\end{prop}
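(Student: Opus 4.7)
The plan is to exploit the frequency monotonicity from Lemma 4.2 together with careful rescaling. For part (i), I would first unpack the definitions to record the algebraic identity
\[V(r)\,J_1(r)=M_1(0+)+\int_0^r h_1(t)\,dt-E_2(r),\qquad E_2(r):=r^{-2-\beta}\!\int_{B_r(X_0)}\!(-x)^\alpha(-y)^\beta\chi_{\{u>0\}}\,dX.\]
Since $X_0\in S^u_{1,\text{flat}}$, Proposition 2.5 combined with Lemma 2.4(iii) forces $\chi_{\{u_m>0\}}\to 1$ in $L^1_{\mathrm{loc}}(\{y<0\})$ for any blow-up sequence, hence $E_2(r)\to M_1(0+)$; together with integrability of $h_1$, this yields $V(r)\,J_1(r)\to 0$. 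Combining this with the differential inequality $dH/dr\geq 2V^2(r)/r$ (forcing $V^2/r\in L^1(0,\widetilde{r_0})$ and in particular a sequence $r_j\to 0$ with $V(r_j)\to 0$), and with the monotonicity $dH/dr\geq 0$, I would propagate the limit to obtain $V(r)\to 0$ as $r\to 0+$; then $D(r)=H(r)+V(r)\to H(0+)$ immediately.

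For part (ii), the change of variables $X\mapsto X_0+r_m X$ directly gives $\int_{\partial B_1}v_m^2\,d\mathcal{H}^1=1$ and $\int_{B_1}|\nabla v_m|^2\,dX=D(r_m)$, the latter bounded by part (i). A standard Poincar\'e-type estimate $\|v_m\|_{L^2(B_1)}\leq C\bigl(\|\nabla v_m\|_{L^2(B_1)}+\|v_m\|_{L^2(\partial B_1)}\bigr)$ then gives a uniform $W^{1,2}(B_1)$-bound.

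For part (iii), compactness of the trace $W^{1,2}(B_1)\hookrightarrow L^2(\partial B_1)$ transfers the normalization $\int_{\partial B_1}v_0^2\,d\mathcal{H}^1=1$ to the weak limit; $v_0\geq 0$ passes from $v_m\geq 0$; and $v_0\equiv 0$ on $B_1^+$ follows from Definition 1.7(i), since in Subcase 1.1 one has $\theta_0=3\pi/2$, so $u\equiv 0$ on $\{y\geq 0\}$. The crucial degree-$H(0+)$ homogeneity is obtained by rewriting the monotonicity formula for $H$ in the coordinates $X=X_0+r_m Z$ and integrating on annuli $B_\sigma\setminus B_\rho$; integrability of $dH/dr$ (inherited from the finite increment $H(\widetilde{r_0})-H(0+)$) forces
\[\int_{B_\sigma\setminus B_\rho}\bigl(Z\cdot\nabla v_m(Z)-H(r_m)v_m(Z)\bigr)^2|Z|^{-2}\,dZ\to 0,\]
which, combined with part (i) and weak convergence in $W^{1,2}$, yields $Z\cdot\nabla v_0=H(0+)v_0$ a.e., the claimed homogeneity. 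Continuity of $v_0$ then follows from harmonicity of $v_0$ on $\{v_0>0\}$ (inherited from $\Delta v_m=0$ there, as in the proof of Lemma 2.4) combined with the explicit homogeneous structure $v_0=r^{H(0+)}\psi(\theta)$ with $\psi\in W^{1,2}$ of a single variable, hence H\"older continuous.

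The hardest step is the pointwise convergence $V(r)\to 0$ in part (i): $L^2/r$-integrability of $V$ alone does not rule out oscillatory spikes, so the argument must combine this with the flat-point identity $V\cdot J_1\to 0$ and the monotonicity of $H$ to exclude such behavior. All subsequent statements reduce cleanly to rescaling and standard weak-convergence/compact-trace arguments, but the flat-point analysis in part (i) is where the perturbation term $h_1$ introduced in this paper (and absent in \cite{VARVAR2011}) genuinely complicates the picture.
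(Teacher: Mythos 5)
Your treatment of (ii) and (iii) is essentially the paper's: the normalization $\int_{\p B_1}v_m^2\,d\mathcal{H}^1=1$ together with $\int_{B_1}|\nabla v_m|^2\,dX=D(r_m)$ and a Poincar\'e/trace estimate gives the $W^{1,2}$ bound, integrating the frequency identity \eqref{eq47} over annuli gives the Euler equation $Z\cdot\nabla v_0=H(0+)v_0$, and the compact trace embedding transfers the normalization and the sign/support conditions to $v_0$. (One ingredient you elide there: to remove the denominators $\int_{\p B_s}v_m^2\,d\mathcal{H}^1$ from the integrated frequency inequality you need a uniform upper bound on them, which the paper obtains as $\int_{\p B_s}v_m^2\,d\mathcal{H}^1\le s^{3+\beta}$ from the monotonicity of $J_1(r)=r^{-3-\beta}\int_{\p B_r(X_0)}u^2\,d\mathcal{H}^1$; this is recoverable but must be said.)

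The genuine gap is in (i), and you half-acknowledge it without closing it. Your identity $V(r)J_1(r)=M_1(0+)+\int_0^r h_1(t)\,dt-E_2(r)\to 0$ is correct, but it cannot by itself give $V(r)\to 0$, because at a flat point $J_1(r)\to 0$ as well (the eventual conclusion is $u\sim\rho^{N}$ with $N>\frac{\beta+2}{2}$, so $J_1(r)\sim r^{2N-\beta-2}$): you are dividing $o(1)$ by $o(1)$. Likewise, ``propagating the limit'' from the $L^1$-integrability of $V^2/r$ and the monotonicity of $H$ is not an argument: that integrability only produces, for an arbitrary vanishing sequence $s_m$, some $r_m\in[s_m,2s_m]$ with $V(r_m)\to 0$, and the entire difficulty is transferring this back to $s_m$. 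The paper's mechanism is a doubling comparison: since the numerator of $V$ is nondecreasing in $r$ and $\int_{\p B_r(X_0)}u^2\,d\mathcal{H}^1$ is nondecreasing, one gets $V(s_m)\le V(r_m)\big/\int_{\p B_{1/2}}v_m^2\,d\mathcal{H}^1$ (inequalities \eqref{eq18}--\eqref{eq19}), and the denominator is bounded away from zero because the blow-up limit $v_0$ along the good sequence $r_m$ is homogeneous of degree $H(0+)$ and normalized, whence $\int_{\p B_{1/2}}v_m^2\,d\mathcal{H}^1\to\int_{\p B_{1/2}}v_0^2\,d\mathcal{H}^1=(1/2)^{2H(0+)+1}>0$. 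This non-degeneracy of the blow-up limit is the key quantitative input absent from your sketch; without it (or an equivalent device) part (i) fails to be established, and with it falls the boundedness of $D(r_m)$ for \emph{arbitrary} sequences claimed in your part (ii), since that boundedness is exactly $D=H+V$ with $V$ bounded.
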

	\begin{proof}
		First, we claim for any sequence $r_m\rightarrow0+$ and  every $0<\rho<\sigma<1$, the sequence $v_m$ defined in \eqref{eq16} satisfies, 
		\begin{equation}
			\begin{aligned}
				\label{eq17}
				\int_{B_{\sigma}\backslash B_{\rho}}|X|^{-4-\beta}\left( \nabla v_m\cdot X-H(0+)v_m\right) ^2dX\,\rightarrow0\,\text{ as }m\rightarrow\infty.
			\end{aligned}
		\end{equation} 
		
		Indeed, for any such $\rho$ and $\sigma$, and every $m$ such that $r_m<\delta$. As $m\rightarrow\infty$, it follows from \eqref{eq47} that,
		\begin{equation}
			\label{eq12}
			\begin{aligned}
				\int_{\rho}^{\sigma}\dfrac{2}{s}\left[ \dfrac{\int_{\p B_{s}}\left( s\nabla v_m\cdot\nu-H(sr_m)v_m\right)^2d\mathcal{H}^1}{\int_{\p B_{s}}v^2_md\mathcal{H}^1}\right]ds
				\leq H(r_m\sigma)-H(r_m\rho)\rightarrow0.
			\end{aligned}
		\end{equation}
		Firstly, we want to show that
		\begin{equation}
			\label{eq48}
			J_1(r)=r^{-3-\beta}\int_{\p B_{r}(X_0)}u^2(X)d\mathcal{H}^1\,\,\,\,\text{is non-decreasing.}
		\end{equation} 
		Together with the monotonicity formula and \eqref{eq70}, we acquire
		\begin{equation*}
			\begin{aligned}
				\dfrac{	dJ_1(r)}{dr}
				=&2r^{-1}\bigg( I_1(r)-\int_{0}^{r}h_1(t)dt-\left( \dfrac{\beta}{2}+1\right) J_1(r)-(-x_0)^{\alpha}\int_{B_{1}}(y_-)^{\beta}dX\\
				&-r^{-2-\beta}	\int_{B_{r}(X_0)}\left( -x\right) ^{\alpha}\left( -y\right) ^{\beta}\chi_{\{u>0\}} dX+\int_{0}^{r}h_1(t)dt+(-x_0)^{\alpha}\int_{B_{1}}(y_-)^{\beta}dX\bigg) \\
				\geq&2r^{-3-\beta}\bigg(	\left( -x_0\right) ^{\alpha}\int_{B_{r}(X_0)}\left( y_-\right) ^{\beta}(1-\chi_{\{u>0\}}) dX\\
				&+r^{2+\beta}\int_{0}^{r}h_1(t)dt+\int_{B_{r}(X_0)} \left[ \left( -x_0\right) ^{\alpha}-(-x)^{\alpha}\right] (-y)^{\beta}\chi_{\{u>0\}}dX\bigg).
			\end{aligned}
		\end{equation*}
		It follows from the similar reason as $V(r)\geq0$ in Lemma 4.2 that we have $J'_1(r)\geq0$.

		Now note that, for every $s\in(\rho,\sigma)\subset(0,1)$ and $m$ as before. Recalling the definition of $v_m$, we calculate the integration of $v^2_m$ on $\p B_{s}$, we have
		\begin{equation*}
			\begin{aligned}
				\int_{\p B_{s}}v^2_md\mathcal{H}^1	&=\dfrac{\int_{\p B_{r_ms}(X_0)}u^2(X)d\mathcal{H}^1}{\int_{\p B_{r_m}(X_0)}u^2(X)d\mathcal{H}^1}\\
				&=\dfrac{(r_ms)^{-3-\beta}\int_{\p B_{r_ms}(X_0)}u^2(X)d\mathcal{H}^1}{r_m^{-3-\beta}\int_{\p B_{r_m}(X_0)}u^2(X)d\mathcal{H}^1}s^{3+\beta}.
			\end{aligned}
		\end{equation*}
		Since $J_1(r)$ is non-decreasing, it follows that
		\begin{equation}
			\label{56}
			\int_{\p B_{s}}v^2_md\mathcal{H}^1\leq s^{3+\beta}.
		\end{equation}
		Combining with \eqref{eq12}, as $m\rightarrow\infty$, we have
		\begin{equation}
			\label{eq13}
			\begin{aligned}
				&\int_{\rho}^{\sigma}\dfrac{2}{s}s^{-3-\beta} \int_{\p B_{s}}\left( s\nabla v_m\cdot\nu-H(sr_m)v_m\right)^2d\mathcal{H}^1ds\rightarrow0.
			\end{aligned}
		\end{equation}
		Thus \eqref{eq17} follows from the fact \eqref{eq13}.

		Next, we use the claim \eqref{eq17} to give the proof of the results in this proposition.
		
		For the result (i), we argue by contradiction. Suppose that $V(s_m)\nrightarrow0$, where $s_m$ is a vanishing sequence of positive constants. It follows from the fact that $\frac{1}{r}V^2(r)$ is integrable with respect to $r\in[s_m,2s_m]$, hence,  that the minimum of $V(r)$ converges to zero as $m\rightarrow\infty$.
		
		Without loss of generality, we may assume that $r_m$ is a sequence such that $r_m\in[s_m,2s_m]$ and $V(r_m)\rightarrow0 $ as $m\rightarrow\infty$.
		In view of the existence of the frequency function $H(0+)$, $D(r_m)$ is bounded as $r_m\rightarrow0+$.	We now calculate  $D(r_m)$ by scaling,
		\begin{equation}
			\label{eq30}
			\begin{aligned}
				D(r_m)=\int_{B_1}|\nabla v_m|^2dX.
			\end{aligned}
		\end{equation}
		Thus (ii) holds and there exists  $v_0\in W^{1,2}(B_1)$ as the weak limit of $v_m$ up to  a subsequence. Together with the trace theorem $W^{1,2}(B_1) \hookrightarrow L^2(\p B_1)$,  we have 
		\begin{center}
			$v_m\rightarrow v_0$ strongly in $L^2(\p B_1)$.
		\end{center}
		The conclusion (iii) now follows by a direct calculation of  $\int_{\p B_1}|v_m|^2dX=1$ and  \eqref{eq17}. It remains to show that (i) holds. Substituting $s_m$ into the definition of $V(r)$ for $r$, 
		\begin{equation}
			\label{eq18}
			\begin{aligned}
				&V(s_m)\\	=&\dfrac{s_m^{-2-\beta}\left[	\int_{B_{s_m}(X_0)} \left( -x_0\right) ^{\alpha}\left( y_-\right) ^{\beta}(1-\chi_{\{u>0\}})+\left[ \left( -x_0\right) ^{\alpha}-(-x)^{\alpha}\right] (-y)^{\beta}\chi_{\{u>0\}}dX +s_m^{2+\beta}\int_{0}^{s_m}h_1(t)dt \right] }{s_m^{-3-\beta}\int_{\p B_{s_m}(X_0)}u^2d\mathcal{H}^1}\\
				\leq&\dfrac{s_m^{-2-\beta}\int_{\p B_{r_m}(X_0)}u^2d\mathcal{H}^1}{\left( \frac{r_m}{2}\right) ^{-2-\beta}\int_{\p B_{\frac{r_m}{2}}(X_0)}u^2d\mathcal{H}^1}V(r_m).
			\end{aligned}
		\end{equation}	
		Due to the definition of $v_m$ in  \eqref{eq16}, we can substitute
		$$	\int_{\p B_{\frac{r_m}{2}}(X_0)}u^2d\mathcal{H}^1$$
		with
		$$\int_{\p B_{\frac{1}{2}}}v_m^2d\mathcal{H}^1\int_{\p B_{r_m}(X_0)}u^2d\mathcal{H}^1$$
		in inequality \eqref{eq18}, that is
		\begin{equation}
			\begin{aligned}
				\label{eq19}
				V(s_m)\leq\left( \dfrac{r_m}{2s_m}\right) ^{2+\beta}\dfrac{V(r_m)}{\int_{\p B_{\frac{1}{2}}}v_m^2d\mathcal{H}^1}
				\leq\dfrac{V(r_m)}{\int_{\p B_{\frac{1}{2}}}v_m^2d\mathcal{H}^1}.
			\end{aligned}
		\end{equation}	
		Next, we claim that
		\begin{equation}
			\label{eq83}
			\int_{\p B_{\frac{1}{2}}}v_m^2d\mathcal{H}^1\rightarrow\int_{\p B_{\frac{1}{2}}}v_0^2d\mathcal{H}^1>0.
		\end{equation}
		Suppose that the claim \eqref{eq83} fails, i.e.  $\int_{\p B_{\frac{1}{2}}}v_0^2d\mathcal{H}^1=0$, this means $v_0=0$ holds almost everywhere  on $\p B_{\frac{1}{2}}$. While	the homogeneity of $v_0$ deduces that $$v_0(X)=(2|X|)^{H(0+)}v_0\left( \frac{X}{2|X|}\right),$$
		$v_0=0$ holds almost everywhere on $\p B_1$,
		this is a contradiction to $\int_{\p B_1}v_0^2d\mathcal{H}^1=1$. 
		
		Thus the fact that inequality \eqref{eq19} and $V(r_m)\rightarrow0$ implies $V(s_m)\rightarrow0$, which leads to a contradiction to the fact that $V(s_m)$ is bounded away from $0$. Therefore we have  $V(r)\rightarrow0$ and $D(r)\rightarrow H(0+)$ as $r\rightarrow0+$. We now conclude the proof of Proposition 4.3.
	\end{proof}
	
	Next proposition  allows us to preserve weak solution in the blow-up limit at stagnation points and show that $S^u_{1,\text{flat}}$ is an empty set.
	\begin{prop}
		Let $X_0\in S^u_{1,\text{flat}}$ and $r_m $ be a vanishing sequence of positive real numbers such that the sequence $v_m$ defined in \eqref{eq16} converging weakly in $W^{1,2}(B_1)$ to a limit $v_0$. Then 
		
		(i) there exists a subsequence $v_m$ that converges strongly to $v_0$ in $W^{1,2}_{\text{loc}}(B_1\backslash\{0,0\})$, and $v_0$ is continuous in $B_1$. Moreover, $\Delta v_0$ is a non-negative Radon measure and   $v_0\Delta v_0=0$ holds in the sense of Radon measure in $B_1$;
		
		(ii) 	at each point $X_0$ of the set $S^u_{1,\text{flat}}$ there exists an integer $N(X_0)\geq \frac{\beta}{2}+1$ such that
		$$H(0+)=N(X_0),$$
		and $\text{as}\,\,r\rightarrow0+$,
		\begin{equation}
			\begin{aligned}
				\label{50}
				\dfrac{u(X_0+rX)}{\sqrt{r^{-1}\int_{\p B_{r}(X_0)}u^2d\mathcal{H}^1}}
				\rightarrow\dfrac{\rho^{N(X_0)}\big|\sin(N(X_0)\min\{\max\{\theta,\pi\},2\pi\})\big|}{\left( \int_{\pi}^{2\pi}\sin^2(N(X_0)\theta)d\theta\right) ^{\frac{1}{2}}},
			\end{aligned}
		\end{equation}
		that is 
		\begin{equation*}
			\begin{aligned}
		v_0=\begin{cases}
			0\,\,\,&\theta\leq\pi,\\
		\dfrac{\rho^{N(X_0)}|\sin(N(X_0)\theta)|}{\left( \int_{\pi}^{2\pi}\sin^2(N(X_0)\theta)d\theta\right) ^{\frac{1}{2}}}\,\,\,\,&\pi<\theta<2\pi,\\
		0&\theta\geq2\pi,
				\end{cases}
			\end{aligned}
		\end{equation*}
		where $X=(\rho\cos\theta,\rho\sin\theta)$.
	\end{prop}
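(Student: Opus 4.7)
I would obtain strong convergence in $W^{1,2}_{\mathrm{loc}}(B_1 \setminus \{0\})$ by imitating the mollification argument from Lemma 2.4(ii): test the equation $\Delta v_m = 0$ in $\{v_m > 0\}$ against $\max(v_m - \varepsilon, 0)^{1+\varepsilon} \eta$ with $\eta \in C_0^{\infty}(B_1 \setminus \{0\})$, let $\varepsilon \to 0$ and then $m \to \infty$ to establish $\int |\nabla v_m|^2 \eta \to \int |\nabla v_0|^2 \eta$, and invoke Proposition 3.32 of \cite{Brezis} to upgrade to $L^2_{\mathrm{loc}}(B_1 \setminus \{0\})$-convergence of the gradient. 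Continuity of $v_0$ follows from interior elliptic regularity on $\{v_0 > 0\}$ together with $v_0 \equiv 0$ on $B_1^+$. The sign $\Delta v_0 \geq 0$ is obtained by passing the distributional inequality $\Delta v_m \geq 0$ (valid since $v_m \geq 0$ is harmonic on its positivity set) to the limit, and $v_0 \Delta v_0 = 0$ follows because $\mathrm{supp}\,\Delta v_0 \subseteq \partial\{v_0 > 0\}$, where $v_0$ vanishes by continuity.

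\textbf{Part (ii).} By Proposition 4.3(iii), $v_0$ is homogeneous of degree $N := H(0+)$, non-negative, zero on $B_1^+$, and $\int_{\partial B_1} v_0^2 \, d\mathcal{H}^1 = 1$; the bound $N \geq \beta/2 + 1$ is supplied by Lemma 4.2(iii). Writing $v_0(\rho,\theta) = \rho^N \psi(\theta)$ with $\psi \geq 0$ on $[0, 2\pi]$, continuous, and $\psi \equiv 0$ on $[0,\pi]$, the equation $\Delta v_0 = 0$ in $\{v_0 > 0\}$ becomes $\psi'' + N^2\psi = 0$ on each connected component of $\{\psi > 0\} \subseteq (\pi, 2\pi)$. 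Combined with the vanishing at the endpoints of each component, each is an interval of length exactly $\pi/N$ on which $\psi$ is a scaled $\sin(N \cdot)$. The crucial step is to show that these components tile $(\pi, 2\pi)$ without gaps, so that if $k$ is their number then $k\pi/N = \pi$, forcing $N = k \in \mathbb{Z}_+$. Once this is accomplished, continuity of $v_0$ across the nodal rays $\theta = \pi + j\pi/N$ together with the normalization $\int_0^{2\pi}\psi^2 \, d\theta = 1$ yields the stated explicit formula.

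To exclude gaps, I plan to exploit the flat condition together with Harnack's inequality. By Lemma 2.4(iii) and the definition of $S^u_{1,\mathrm{flat}}$, $\chi_{\{v_m > 0\}} = \chi_{\{u_m > 0\}} \to \chi_{B_1^-}$ in $L^1_{\mathrm{loc}}$. Suppose for contradiction that $\{\psi > 0\}$ misses a sub-interval $I \subseteq (\pi, 2\pi)$ of positive measure, and let $\Sigma \subseteq B_1^-$ be the corresponding open cone. Part (i) yields $v_m \to 0$ uniformly on compact subsets of $\Sigma$, whereas $v_m \to v_0$ remains of order $1$ on compact subsets of an adjacent positivity sector of $v_0$. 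Since the flat condition forces $\{v_m > 0\}$ to occupy nearly all of $B_1^-$ for large $m$, and in particular both sectors inside a common connected component, Harnack's inequality for the harmonic function $v_m$ on this connected positivity set would bound $\max v_m / \min v_m$ uniformly in $m$, contradicting the blow-up of that ratio. Hence $\{\psi > 0\}$ has full measure in $(\pi, 2\pi)$, as required.

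\textbf{Main obstacle.} The hardest point will be making the Harnack argument fully rigorous: the $L^1$-convergence of characteristic functions does not automatically supply connectedness of $\{v_m > 0\}$, so one must use the flat scaling (and the collapse of $\partial\{v_m > 0\}$ toward the $x$-axis, driven by the degeneracy of $|x|^\alpha|y|^\beta$ on $\{y=0\}$) to show that, for large $m$, $\{v_m > 0\} \cap B_1^-$ contains a single connected open subset of nearly full measure on which Harnack can be applied. A secondary concern is that the convergence in (i) must be quantitative enough to convert the qualitative Harnack bounds into a genuine contradiction.
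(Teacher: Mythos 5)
Your overall architecture for (ii) — homogeneity from Proposition 4.3, the Sturm--Liouville reduction $\psi''+N^2\psi=0$ on components of $\{\psi>0\}\subseteq(\pi,2\pi)$ each of length $\pi/N$, and the identification of ``no gaps'' as the crux forcing $N\in\mathbb{Z}$ — matches what is actually needed; the paper itself essentially gives no proof, deferring to Theorems 9.1 and 10.1 of \cite{VARVAR2011} and, for part (i), to \cite{Evans} and Theorem 8.17 of \cite{Gilbarg} rather than to the mollification identity of Lemma 2.4(ii) that you adapt (your route works, but note that the uniform $W^{1,\infty}$ bound used there is \emph{not} available for the $v_m$-normalization; you must rely only on the $L^2$-strong/$W^{1,2}$-weak pairing in the integration-by-parts identity). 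However, two steps in (ii) are genuinely incomplete. First, the no-gap step is not actually carried out: you correctly observe that $L^1$-convergence of $\chi_{\{v_m>0\}}$ does not give a fixed connected compact set inside $\{v_m>0\}$ on which Harnack has a uniform constant. The missing ingredient is the Alt--Caffarelli non-degeneracy estimate combined with the fact that $J_1(r_m)\to0$ at a flat point (so $v_m=u_m/\sqrt{J_1(r_m)}$ is a further magnification of the natural blow-up): any free boundary point of $v_m$ in a fixed compact $K\subset\subset B_1^-$ would force $\sup_K v_m\to\infty$, contradicting the uniform $W^{1,2}$ bound; together with the measure convergence this yields $K\subset\{v_m>0\}$ for large $m$. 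This is exactly the fact the paper invokes, without proof, at the start of Corollary 4.5, and it is the real content behind the Harnack chain you want.

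Second, your claim that ``continuity of $v_0$ across the nodal rays together with the normalization yields the stated explicit formula'' is wrong as stated: each bump $c_j\sin\bigl(N(\theta-\pi-(j-1)\pi/N)\bigr)$ vanishes at the nodal rays, so continuity (and indeed non-negativity, $\Delta v_0\ge0$, $v_0\Delta v_0=0$, homogeneity and the frequency identity) is satisfied for \emph{arbitrary} amplitudes $c_j>0$. Equal amplitudes come from passing the first-variation (domain-variation/Pohozaev) identity \eqref{eq23} to the limit after division by $J_1(r_m)$, using $V(r_m)\to0$ to kill the volume-deficit term: the resulting stationarity of the Dirichlet energy of $v_0$ forces $|\nabla v_0|^2$ to match across each interior nodal ray, i.e.\ $c_j=c_{j+1}$. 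Without this, you obtain the correct nodal structure and integrality of $N$ (which is all Corollary 4.5 actually uses), but not the formula \eqref{50} as written.
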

	\begin{proof}
		This is a standard result follows directly from Theorem 9.1 and Theorem 10.1 in \cite{VARVAR2011}, therefore we omit the detail of the proof here.
		
		(i) Based on Theorem 1.1 and 3.1 in  \cite{Evans} and
		Theorem 8.17 in
		\cite{Gilbarg}, we can obtain that $v_m\rightarrow v_0$ strongly in  $W^{1,2}_{\text{loc}}(B_1\backslash\{0,0\})$. As a consequence of the strong convergence, we obtain that $v_0\Delta v_0=0$ in the sense of Radon measure in $B_1$.
		
		(ii) Due to the homogeneity of $v_0$ and $\Delta v_0=0$ in $ \{v_0>0\} $, 
		\begin{equation*}
			\begin{aligned}
				v_0(\rho,\theta)=\begin{cases}
					\dfrac{\rho^{N(X_0)}|\sin(N(X_0)\theta)|}{\sqrt{\int_{\pi}^{2\pi}\sin^2\left( N(X_0)\theta\right) d\theta}}\,\,&\theta\in(\theta_1,\theta_2),\\
					0&\theta\notin(\theta_1,\theta_2).
				\end{cases}
			\end{aligned}
		\end{equation*}
		In addition, for $v_0\equiv0$ in $\mathbb{R}^2\cap\{X=(x,y)|y\geq0\}$,  $\theta_1=\pi$ and  $\theta_2=
		2\pi$.
	\end{proof}

	\begin{cor}
		$S^u_{1,\text{flat}}$ is empty.
	\end{cor}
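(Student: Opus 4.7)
The plan is to argue by contradiction, combining the explicit shape of the blow-up limit from Proposition 4.4 with the harmonicity of $v_m$ on its positivity set. Suppose $X_0=(x_0,0)\in S^u_{1,\text{flat}}$; then by Proposition 4.4(ii), along a vanishing sequence $r_m$ the renormalized blow-ups $v_m$ defined by \eqref{eq16} converge strongly in $W^{1,2}_{\text{loc}}(B_1\setminus\{(0,0)\})$ to
$$v_0(\rho,\theta)=\frac{\rho^N\,|\sin(N\theta)|}{\sqrt{\int_\pi^{2\pi}\sin^2(N\theta)\,d\theta}}\,\chi_{\{\pi<\theta<2\pi\}},$$
where $N=N(X_0)\in\mathbb{Z}$ and $N\geq\frac{\beta}{2}+1$. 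Since $\beta\geq 1$ in Subcase 1.1, this forces $N\geq 2$, so that $|\sin(N\theta)|$ possesses the $N-1\geq 1$ interior zeros $\theta_k=\pi+k\pi/N$ for $k=1,\dots,N-1$, each lying strictly inside $\{y<0\}$.

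The second step is to pass the harmonicity of $v_m$ to the limit in a punctured neighbourhood of each $\theta_k$. The flat identity $M_1(0+)=(-x_0)^\alpha\int_{B_1}(y_-)^\beta\,dX$ is equivalent to $\chi_{\{u_m>0\}}\to\chi_{\{y<0\}}$ in $L^1_{\text{loc}}$, and combining this with Assumption A and the $C^{2,\gamma}$-regularity of the topological free boundary away from $S^u$ (Definition 1.7(iii)) upgrades it to Hausdorff convergence $\p\{v_m>0\}\to\{y=0\}$ on compact subsets of $B_1\setminus\{(0,0)\}$. Consequently, for every $\phi\in C_0^1(\{y<0\}\cap B_1\setminus\{(0,0)\})$ one has $\operatorname{supp}\phi\subset\{v_m>0\}$ for all $m$ large, so that harmonicity of $v_m$ and integration by parts give $\int\nabla v_m\cdot\nabla\phi\,dX=0$. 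Strong $W^{1,2}_{\text{loc}}$ convergence then yields $\int\nabla v_0\cdot\nabla\phi\,dX=0$, i.e.\ $\Delta v_0=0$ in the distributional sense on $\{y<0\}\cap B_1\setminus\{(0,0)\}$.

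A direct polar computation of $\Delta v_0$ then produces the contradiction. Since $\Delta(\rho^N\sin(N\theta))=0$ away from the zeros of $\sin(N\theta)$, the only contribution to $\Delta v_0$ inside $\{y<0\}$ comes from jumps of $\operatorname{sgn}(\sin(N\theta))$ across each $\theta_k$; using $\cos(N\theta_k)=(-1)^k$, a short calculation gives
$$\Delta v_0=2Nc_0\,\rho^{N-2}\sum_{k=1}^{N-1}\delta_{\theta=\theta_k}\qquad\text{in }\{y<0\}\cap B_1\setminus\{(0,0)\},$$
with $c_0=1/\sqrt{\int_\pi^{2\pi}\sin^2(N\theta)\,d\theta}>0$. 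This is a non-zero positive distribution supported on the interior rays $\theta_k\subset\{y<0\}$; choosing $\phi$ with positive mass concentrated near one such $\theta_k$ forces $\int\nabla v_0\cdot\nabla\phi\,dX\neq 0$, contradicting the previous step. Hence $S^u_{1,\text{flat}}=\emptyset$.

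The main obstacle will be upgrading the $L^1$-convergence of $\chi_{\{u_m>0\}}$ to genuine Hausdorff convergence of the free boundaries, so as to guarantee that test functions supported strictly below $\{y=0\}$ eventually lie inside $\{v_m>0\}$ and thin ``fingers'' of $\{u_m=0\}$ cannot intrude into $\operatorname{supp}\phi$. This relies on a careful interplay among the extremality of the flat density value, the non-degeneracy in Assumption A, and the $C^{2,\gamma}$-regularity of $\p\{u>0\}\setminus S^u$; once this convergence is secured, the harmonicity of $v_m$ passes cleanly to $v_0$ and meets the obstruction manufactured by the $|\sin(N\theta)|$ factor.
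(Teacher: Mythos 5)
Your proposal is correct and follows essentially the same route as the paper: both argue that flatness forces $v_m>0$ eventually on compact subsets of $B_1\cap\{y<0\}$, hence $v_0$ is harmonic there, which contradicts the explicit limit $\rho^{N}|\sin(N\theta)|$ from Proposition 4.4(ii) since $N\geq\frac{\beta}{2}+1\geq 2$ puts a non-harmonic kink of $|\sin(N\theta)|$ strictly inside $\{y<0\}$. You merely spell out two steps the paper leaves implicit (the upgrade from $L^1$-convergence of $\chi_{\{u_m>0\}}$ to eventual positivity of $v_m$ on compacta, and the distributional computation of $\Delta v_0$ along the interior rays), which is a faithful elaboration rather than a different argument.
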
 
	\begin{proof}
		Suppose towards a contradiction that $S^u_{1,\text{flat}}\neq	\varnothing $, there exists a point $X_0\in S^u_{1,\text{flat}}$. By Proposition 4.4 (ii), as $r\rightarrow0+$,	\begin{center}
			$\dfrac{u(X_0+rX)}{\sqrt{r^{-1}\int_{\p B_{r}(X_0)}u^2d\mathcal{H}^1}}\rightarrow\dfrac{\rho^{N(X_0)}|\sin(N(X_0)\min\{\max\{\theta,\pi\},2\pi\})|}{\left( \int_{\pi}^{2\pi}\sin^2(N(X_0)\theta)d\theta\right) ^{\frac{1}{2}}}$
		\end{center}
		strongly in $W^{1,2}_{\text{loc}}(B_1\backslash\{0,0\})$ and weakly in $W^{1,2}(B_1)$, where $X=(\rho\cos\theta,\rho\sin\theta).$
		
		In fact, $X_0\in S^u_{1,\text{flat}}$, then for any ball $\tilde{B} \subset\subset B_1\cap\{X=(x,y)|y<0\}$, $v_{m}>0$ in $\tilde{B}$ for sufficiently small $r_m$. On the other  hand, $v_0$ is harmonic in $\tilde{B}$, contradicting \eqref{50} in view of $N(X_0)\geq\frac{\beta}{2}+1$. Therefore $S^u_{1,\text{flat}}$ is empty.
	\end{proof}

	The proof of	Subcase 2.1 can be completed by the same  method as employed in Subcase 1.1. However, the proof for Case 3 is slightly different and worth mentioning here. Similarly, we can get the following conclusion. Reviewing the previous definition, $$S^u_{3,\text{flat}}:=\left\lbrace X=(x,y)\in S^u\Big|M_3(0+)=\int_{B_1} |x|^{\alpha}|y|^{\beta}dX\right\rbrace$$
	as the set of flat stagnation points in Case 3.
	
	\begin{prop}
		Consider the functions as follows
		$$\overline{D}(r)=r\dfrac{\int_{B_{r}}|\nabla u|^2dX}{\int_{\p B_{r}}u^2d\mathcal{H}^1},$$$$\overline{V}(r)=r\dfrac{\int_{B_{r}} |x|^{\alpha}|y|^{\beta}\left( 1-\chi_{\{u>0\}}\right) dX}{\int_{\p B_{r}}u^2d\mathcal{H}^1},$$
		and	the “frequency” function is
		$$\overline{H}(r)=\overline{D}(r)-\overline{V}(r).$$
		
		Then 	for some $\widetilde{r_0}\in(0,\delta)$ sufficiently small, the following conclusions hold
		
		(i)  $\lim\limits_{r\rightarrow0+}\overline{V}(r)=0$ and $\lim\limits_{r\rightarrow0+}\overline{D}(r)=\overline{H}(0+)$;
		
		(ii) for any sequence $r_m\rightarrow0+$ as $m\rightarrow\infty$, the sequence
		\begin{equation}
			\label{eq52}
			\overline{v}_m(x):=\dfrac{u(r_mX)}{\sqrt{r_m^{-1}\int_{\p B_{r_m}}u^2d\mathcal{H}^1}}
		\end{equation} 
		is bounded in $W^{1,2}(B_1)$;
		
		(iii) suppose that the sequence $\overline{v}_m$ in \eqref{eq52} converges weakly in $W^{1,2}(B_1)$ to a blow-up limit $\overline{v}_0$, the function $\overline{v}_0$ is continuous and a homogeneous function of  $\overline{H}(0+)$ degree in $B_1$, and satisfies $\overline{v}_0\geq0$ in $B_1$, $\overline{v}_0\equiv0$ in $B_1\cap\{X|X\cdot\nu^*\geq0\}$, and $\int_{\p B_1}\overline{v}^2_0d\mathcal{H}^1=1$;
		
		(iv)  $\overline{v}_m$ converges to $\overline{v}_0$ strongly in  $W^{1,2}_{\text{loc}}(B_1\backslash\{0,0\})$,  and $\Delta \overline{v}_0$ is a non-negative Radon measure satisfying $\overline{v}_0\Delta \overline{v}_0=0$ in the sense of Radon measure in $B_1$;
		
		(v)	 there exists an integer $\overline{N}(O)\geq \frac{\alpha+\beta}{2}+1$ such that
		$\overline{H}(0+)=\overline{N}(O),$ 
		and $\text{as}\,\,r\rightarrow0+$,
		\begin{equation}
			\begin{aligned}
				\label{51}
				\dfrac{u(rX)}{\sqrt{r^{-1}\int_{\p B_{r}}u^2d\mathcal{H}^1}}
				\rightarrow\dfrac{\rho^{\overline{N}(O)}|\sin(\overline{N}(O)\min\{\max\{\theta,\theta^*-\frac{\pi}{2}\},\theta^*+\frac{\pi}{2}\}+\overline{N}(O)(\frac{\pi}{2}+\theta^*))|}{\left( \int_{\theta^*-\frac{\pi}{2}}^{\theta^*+\frac{\pi}{2}}\sin^2(\overline{N}(O)\theta+\overline{N}(O)(\frac{\pi}{2}-\theta^*))d\theta\right) ^{\frac{1}{2}}}
			\end{aligned}
		\end{equation}
		strongly in $W^{1,2}_{\text{loc}}(B_1\backslash(0,0))$ and weakly in $W^{1,2}(B_1)$, where $X=(\rho\cos\theta,\rho\sin\theta)$.
	\end{prop}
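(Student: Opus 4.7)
The plan is to adapt the frequency-formula machinery developed for Subcase 1.1 (Lemma 4.2 and Propositions 4.3--4.4) to the Case 3 setting. A key simplification is that the Weiss monotonicity formula for $M_3(r)$ in Lemma 2.8 contains no remainder term, so one has the clean identity
\begin{equation*}
\frac{M_3(r)-M_3(0+)}{J_3(r)}=\overline{H}(r)-\frac{\alpha+\beta+2}{2},
\end{equation*}
where $J_3(r)=r^{-3-\alpha-\beta}\int_{\partial B_r}u^2\,d\mathcal{H}^1$ and $M_3(0+)=\int_{B_1}|x|^\alpha|y|^\beta\,dX$ at a flat stagnation point. The lower bound $\overline{H}(r)\geq\frac{\alpha+\beta+2}{2}$ then follows at once from $M_3(r)\geq M_3(0+)$; no $V_2$-type correction is needed because the gradient function $|x|^\alpha|y|^\beta$ is already centered at the stagnation point.

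Next, I would compute $\overline{H}'(r)$ along the lines of \eqref{eq47}, obtaining
\begin{equation*}
\overline{H}'(r)=\frac{2}{r}\left[\frac{\int_{\partial B_r}(r\nabla u\cdot\nu-\overline{H}(r)u)^2\,d\mathcal{H}^1}{\int_{\partial B_r}u^2\,d\mathcal{H}^1}+\overline{V}(r)\Bigl(\overline{H}(r)-\tfrac{\alpha+\beta+2}{2}\Bigr)\right]\geq 0,
\end{equation*}
so $\overline{H}$ is nondecreasing, $\overline{H}(0+)$ exists, and $\overline{V}^2(r)/r$ is integrable on $(0,\tilde r_0)$. For (i), I would argue by contradiction as in Proposition 4.3(i): if $\overline{V}(s_m)\not\to 0$ for some $s_m\to 0+$, pick $r_m\in[s_m,2s_m]$ with $\overline{V}(r_m)\to 0$ and exploit the monotonicity of $J_3(r)$ (obtained exactly as in \eqref{eq48} from the monotonicity formula) to control the ratio $\overline{V}(s_m)/\overline{V}(r_m)$; positivity of $\int_{\partial B_{1/2}}\overline{v}_0^2\,d\mathcal{H}^1$ then forces a contradiction, giving $\overline{V}(r)\to 0$ and $\overline{D}(r)\to\overline{H}(0+)$. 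For (ii)--(iii), the identity $\overline{D}(r_m)=\int_{B_1}|\nabla\overline{v}_m|^2\,dX$ combined with the trace theorem yields the $W^{1,2}(B_1)$-bound, a weak limit $\overline{v}_0$, and $\int_{\partial B_1}\overline{v}_0^2\,d\mathcal{H}^1=1$; the standard claim
\begin{equation*}
\int_{B_\sigma\setminus B_\rho}|X|^{-4-\alpha-\beta}\bigl(\nabla\overline{v}_m\cdot X-\overline{H}(0+)\overline{v}_m\bigr)^2\,dX\to 0
\end{equation*}
forces homogeneity of $\overline{v}_0$ of degree $\overline{H}(0+)$, while $\overline{v}_0\equiv 0$ on $B_1\cap\{X\cdot\nu^*\geq 0\}$ is inherited directly from Definition 1.7.

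For (iv), strong $W^{1,2}_{\text{loc}}(B_1\setminus\{(0,0)\})$-convergence of $\overline{v}_m$ to $\overline{v}_0$ follows from interior elliptic regularity applied to the harmonic function $\overline{v}_m$ on its positivity set, mirroring Proposition 4.4(i); the product identity $\overline{v}_0\Delta\overline{v}_0=0$ then passes to the limit as a nonnegative Radon measure. For (v), homogeneity of degree $\overline{H}(0+)$, harmonicity in $\{\overline{v}_0>0\}$, and vanishing on $\{X\cdot\nu^*\geq 0\}$ together force $\overline{v}_0(\rho,\theta)=c\rho^{\overline{H}(0+)}\sin\bigl(\overline{H}(0+)(\theta-\theta_1)\bigr)$ on the complementary sector $\theta\in(\theta^*+\tfrac{\pi}{2},\theta^*+\tfrac{3\pi}{2})$ of angular width $\pi$; matching the zero-Dirichlet condition at both endpoints forces $\overline{H}(0+)\in\mathbb{Z}$, so $\overline{H}(0+)=\overline{N}(O)$, while (i) yields $\overline{N}(O)\geq\frac{\alpha+\beta}{2}+1$, and the normalization $\int_{\partial B_1}\overline{v}_0^2=1$ pins down \eqref{51}.

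The main obstacle I expect is step (i): establishing $\overline{V}(r)\to 0$ requires carefully tracking two different scales ($s_m$ versus $r_m$) and leveraging the strong $L^2(\partial B_{1/2})$-convergence of the rescaled sequence to close the contradiction argument. A second subtlety, compared with Subcase 1.1, is that the degenerate direction $\nu^*=(\cos\theta^*,\sin\theta^*)$ enters through an additional assumption on the blow-up rather than being dictated by an external force, so the angular coordinates must be aligned consistently throughout, and the argument excluding flatness (analogous to Corollary 4.5) must use any ball $\tilde B\subset\subset B_1\cap\{X\cdot\nu^*<0\}$ in place of the lower half-plane.
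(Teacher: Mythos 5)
Your proposal is correct and follows essentially the route the paper intends: the paper states Proposition 4.6 without proof, deferring to the Subcase 1.1 machinery (Lemma 4.2 and Propositions 4.3--4.4) and to Theorems 9.1 and 10.1 of \cite{VARVAR2011}, and your adaptation supplies exactly the omitted details. In particular you correctly isolate the one genuine simplification of Case 3: since $|x|^{\alpha}|y|^{\beta}$ is exactly homogeneous about the stagnation point, the monotonicity formula for $M_3$ carries no remainder $h(r)$, so $\overline{V}$ needs no $V_2$-type correction and the bound $\overline{H}(r)\geq\frac{\alpha+\beta}{2}+1$ drops out of $M_3(r)\geq M_3(0+)$; the monotonicity of $J_3$ and the two-scale contradiction for (i) then go through verbatim. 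One imprecision in your sketch of (v): writing $\overline{v}_0=c\rho^{\overline{N}}\sin\left(\overline{N}(\theta-\theta_1)\right)$ on the \emph{full} width-$\pi$ sector is incompatible with $\overline{v}_0\geq0$ once $\overline{N}\geq2$. The correct statement is that each connected component of $\{\overline{v}_0>0\}$ is a sector of opening $\pi/\overline{N}$ carrying a single positive arch of the sine, and it is the fitting of such components into the half-plane that forces $\overline{N}\in\mathbb{Z}$ and yields the absolute value in \eqref{51}. This distinction is not cosmetic: Corollary 4.7 derives its contradiction precisely from the fact that $|\sin(\overline{N}\theta)|$ with $\overline{N}\geq2$ fails to be harmonic across its interior zeros, while $\overline{v}_m>0$ on compact subsets of the open half-plane forces $\overline{v}_0$ to be harmonic there.
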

	\begin{cor}
		$S^u_{3,\text{flat}}$ is empty.
	\end{cor}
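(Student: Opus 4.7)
The plan is to mirror the contradiction argument from Corollary 4.5 (Subcase 1.1), with the Type 3 frequency machinery of Proposition 4.6 taking the place of Proposition 4.4. First I would assume for contradiction that $S^u_{3,\text{flat}} \neq \varnothing$ and fix a point $X_0 = (0,0) \in S^u_{3,\text{flat}}$. Along any vanishing sequence $r_m \to 0+$, after passing to a subsequence, Proposition 4.6 (iv)--(v) yields that the normalized blow-ups $\overline{v}_m$ from \eqref{eq52} converge to $\overline{v}_0$ strongly in $W^{1,2}_{\mathrm{loc}}(B_1 \setminus \{(0,0)\})$ and weakly in $W^{1,2}(B_1)$, with $\overline{v}_0$ given by the explicit formula \eqref{51}. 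On its support half-disk $B_1 \cap \{X \cdot \nu^* < 0\}$, one has $\overline{v}_0(\rho,\theta) = c_0\, \rho^{\overline{N}(O)}\, |\sin(\overline{N}(O)\theta + c)|$ for some positive constant $c_0$ and phase $c$, and crucially the homogeneity degree satisfies $\overline{N}(O) \geq \tfrac{\alpha+\beta}{2} + 1 \geq 2$ since $\alpha, \beta \geq 1$.

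The second step upgrades the flat density condition to pointwise positivity on compactly contained balls. The equality $M_3(0+) = \int_{B_1} |x|^\alpha |y|^\beta \, dX$, combined with the vanishing of $\overline{v}_m$ on $\{X \cdot \nu^* \geq 0\}$, forces $|x|^\alpha |y|^\beta (1 - \chi_{\{\overline{v}_m > 0\}}) \to 0$ in $L^1(B_1)$; since $|x|^\alpha |y|^\beta > 0$ a.e.\ off the coordinate axes, this gives $\chi_{\{\overline{v}_m > 0\}} \to 1$ a.e.\ on the support half-disk. Combining this with continuity of $u$ and openness of $\{u_m > 0\}$, I would conclude that every ball $\tilde B \subset\subset B_1 \cap \{X \cdot \nu^* < 0\}$ satisfies $\tilde B \subset \{\overline{v}_m > 0\}$ for all large $m$. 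Since $\Delta \overline{v}_m = 0$ on its positivity set, strong $W^{1,2}_{\mathrm{loc}}$ convergence then transfers harmonicity to the limit, yielding $\Delta \overline{v}_0 = 0$ in $\tilde B$.

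The final step is the geometric contradiction. Because $\overline{N}(O) \geq 2$, the phase $\overline{N}(O)\theta + c$ sweeps through an interval of length at least $2\pi$ as $\theta$ ranges over the support arc of angular width $\pi$, so $\sin(\overline{N}(O)\theta + c)$ changes sign at an interior angle $\theta_0$. At any such $\theta_0$, taking the absolute value produces a gradient jump of $\overline{v}_0$ across the radial ray $\{\theta = \theta_0\}$, so $\overline{v}_0$ cannot be $C^1$---let alone harmonic---in any ball $\tilde B$ straddling this ray. Choosing such a $\tilde B$ contradicts the harmonicity established in the previous step. The hard part is the density-to-positivity upgrade in Step 2: rigorously passing from the $L^1$ density convergence to the topological inclusion $\tilde B \subset \{\overline{v}_m > 0\}$ requires a careful combination of the continuity and subharmonicity-type properties of the weak solution, exactly as in the analogous step of Corollary 4.5 for Case 1; once this is in hand, the remainder is a direct adaptation.
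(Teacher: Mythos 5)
Your proposal follows essentially the same route as the paper's proof: assume $S^u_{3,\text{flat}}\neq\varnothing$, invoke Proposition 4.6(v) to identify the blow-up limit $\overline{v}_0$ as $\rho^{\overline{N}(O)}\left|\sin(\cdot)\right|$ with $\overline{N}(O)\geq\frac{\alpha+\beta}{2}+1\geq 2$, observe that $\overline{v}_m>0$ on any ball $\tilde B\subset\subset B_1\cap\{X\cdot\nu^*<0\}$ for large $m$ so that $\overline{v}_0$ is harmonic there, and derive a contradiction from the interior kink of the explicit profile. Your added detail on the sign change of the sine and the density-to-positivity upgrade merely fleshes out steps the paper asserts tersely, so the argument is correct and not a genuinely different approach.
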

	\begin{proof}
		Suppose towards a contradiction that 	$S^u_{3,\text{flat}}\neq	\varnothing $, $X_0=(0,0)\in S^u_{3,\text{flat}}$. By Proposition 4.6(v), as $r\rightarrow0+$,	\begin{center}
			$\dfrac{u(rX)}{\sqrt{r^{-1}\int_{\p B_{r}}u^2d\mathcal{H}^1}}\rightarrow\dfrac{\rho^{\overline{N}(O)}|\sin(\overline{N}(O)\min\{\max\{\theta,\theta^*-\frac{\pi}{2}\},\theta^*+\frac{\pi}{2}\}+\overline{N}(O)(\frac{\pi}{2}+\theta^*))|}{\left( \int_{\theta^*-\frac{\pi}{2}}^{\theta^*+\frac{\pi}{2}}\sin^2(\overline{N}(O)\theta+\overline{N}(O)(\frac{\pi}{2}-\theta^*))d\theta\right) ^{\frac{1}{2}}}$
		\end{center}
		strongly in $W^{1,2}_{\text{loc}}(B_1\backslash\{0,0\})$ and weakly in $W^{1,2}(B_1)$, where $X=(\rho\cos\theta,\rho\sin\theta).$
		
		In fact, for any ball $\tilde{B} \subset\subset B_1\cap\{X=(x,y)|X\cdot(\cos\theta^*,\sin\theta^*)<0\}$, $\overline{v}_m>0$ in $\tilde{B}$ for sufficiently small $r_m$, see Fig. 18. On the other  hand, $\overline{v}_0$ is harmonic in $\tilde{B}$, contradicting \eqref{51} in view of $\overline{N}(O)\geq\frac{\alpha+\beta}{2}+1$. Hence $S^u_{3,\text{flat}}$ is empty.	
		\begin{figure}[h]
			\includegraphics[width=50mm]{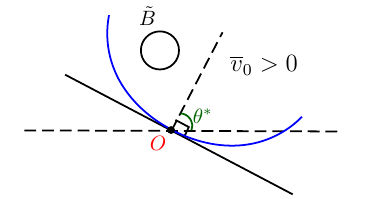}	
			\caption{The location of  $\tilde{B}$}
		\end{figure}
	\end{proof}
	
	\section{Asymptotic directions of free boundary}
	Under the assumption that the free boundary is locally an injective curve, we now derive its asymptotic behavior as it approaches a stagnation point. Without loss of generality, taking Subcase 1.1 as an example to prove the conclusion, the proof of other cases  follows from Subcase 1.1, so we omit here.
	\begin{prop}
		Let $u$ be a weak solution of  free boundary problem \eqref{eq1} satisfying \eqref{eq69}. Suppose in addition that $\p\{u>0\}$ is in a neighborhood of $X_0$ a continuous injective curve $\sigma=(\sigma_1(t),\sigma_2(t)):I\rightarrow\mathbb{R}^2$ such that $\sigma(0)=X_0$, where $I$ is an interval of $ \mathbb{R}^2 $ containing the origin. When  $$M_1(0+)=(-x_0)^{\alpha}\int_{B_1\cap\left\lbrace -\frac{\beta+4}{2(\beta+2)}\pi <\theta<-\frac{\beta}{2(\beta+2)}\pi\right\rbrace } (y_-)^{\beta}dX,$$
		then  $\sigma_1(t)\neq x_0$ in $(-s,s)\backslash\{0\}$ and, depending on the parametrization, either
		\begin{center}
			$\lim\limits_{t\rightarrow0+}\dfrac{\sigma_2(t)}{\sigma_1(t)-x_0}=\tan\left( -\frac{\beta+4}{2(\beta+2)}\pi\right) $ and $\lim\limits_{t\rightarrow0-}\dfrac{\sigma_2(t)}{\sigma_1(t)-x_0}=\tan\left( -\frac{\beta}{2(\beta+2)}\pi\right) ,$
		\end{center}
		or
		\begin{center}
			$\lim\limits_{t\rightarrow0+}\dfrac{\sigma_2(t)}{\sigma_1(t)-x_0}=\tan\left(-\frac{\beta}{2(\beta+2)}\pi \right) $ and $\lim\limits_{t\rightarrow0-}\dfrac{\sigma_2(t)}{\sigma_1(t)-x_0}=\tan\left( -\frac{\beta+4}{2(\beta+2)}\pi\right) .$
		\end{center}
	\end{prop}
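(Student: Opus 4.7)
The strategy is to read the asymptotic directions of $\sigma$ off the explicit blow-up limit $u_0$ furnished by Proposition~2.5. In the corner case at hand, the positive set of $u_0$ is the open cone $C=\{(\rho\cos\theta,\rho\sin\theta):\rho>0,\ \theta_1<\theta<\theta_2\}$ with $\theta_1=-\tfrac{\beta+4}{2(\beta+2)}\pi$ and $\theta_2=-\tfrac{\beta}{2(\beta+2)}\pi$, so that $\p C\cap\p B_1=\{\omega_1,\omega_2\}$ with $\omega_j=(\cos\theta_j,\sin\theta_j)$. Because $\beta\geq 1$ we have the strict inequalities $\cos\theta_1<0<\cos\theta_2$, which places the two rays on opposite sides of the vertical line through $X_0$.

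Given any sequence $t_m\to 0$, $t_m\neq 0$, set $r_m:=|\sigma(t_m)-X_0|$. Injectivity and continuity of $\sigma$ give $r_m>0$ and $r_m\to 0$. Define $\xi_m:=(\sigma(t_m)-X_0)/r_m\in\p B_1$ and the blow-up sequence $u_m(X):=r_m^{-(\beta+2)/2}u(X_0+r_mX)$. By Lemma~2.4, after passing to a subsequence, $u_m\to u_0$ strongly in $W^{1,2}_{\mathrm{loc}}(\mathbb{R}^2)$; the Bernstein bound \eqref{eq69} together with interior harmonic estimates furnishes a uniform local Lipschitz control on $u_m$, so Arzelà--Ascoli upgrades this to locally uniform convergence. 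Since $u_m(\xi_m)=0$, any accumulation point $\xi^*$ of $\{\xi_m\}$ satisfies $u_0(\xi^*)=0$. On the other hand, the continuous injective curve $\sigma$ locally separates $\{u>0\}$ from $\{u=0\}$, so every neighbourhood of $\sigma(t_m)$ meets $\{u>0\}$; rescaling and passing to the limit gives $\xi^*\in\overline{\{u_0>0\}}$. Hence $\xi^*\in\p\{u_0>0\}\cap\p B_1=\{\omega_1,\omega_2\}$.

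This already rules out vertical slopes: if $\sigma_1(t_n)=x_0$ for some $t_n\to 0$, then $\xi_n=(0,\pm 1)\notin\{\omega_1,\omega_2\}$, contradicting the previous step. Hence there is $s>0$ with $\sigma_1(t)\neq x_0$ on $(-s,s)\setminus\{0\}$, and by continuity and connectedness $\operatorname{sgn}(\sigma_1(t)-x_0)$ is constant on each of $(-s,0)$ and $(0,s)$. On the side where $\sigma_1>x_0$ the first component of every $\xi_m$ is positive, forcing the unique limit $\omega_2$; on the side where $\sigma_1<x_0$ the unique limit is $\omega_1$. Uniqueness of subsequential limits promotes subsequential to full one-sided convergence, and then
\[
\lim_{t\to 0\pm}\frac{\sigma_2(t)}{\sigma_1(t)-x_0}\in\{\tan\theta_1,\tan\theta_2\},
\]
with the chosen value determined by the sign of $\sigma_1(t)-x_0$ on that side.

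It remains to show that the two sides yield \emph{different} rays, i.e.\ $\sigma_1-x_0$ changes sign across $t=0$. If it did not, then for all small $t\neq 0$ the image of $\sigma$ would, after rescaling by $r_m$, avoid a fixed neighbourhood of one of the $\omega_j$'s; however, the locally uniform convergence $u_m\to u_0$ together with the explicit non-degenerate growth of $u_0$ near $\p C$ produces free boundary points of $u$ arbitrarily close (after rescaling) to each $\omega_j$, and these must lie on $\sigma$ since $\sigma$ parametrizes $\p\{u>0\}$ locally. This contradiction forces the sign flip and delivers the two alternatives in the statement. The main technical obstacle is precisely this last topological step---passing from $W^{1,2}_{\mathrm{loc}}$ plus locally uniform convergence of $u_m$ to Hausdorff closeness of the free boundaries---which is why both the Bernstein bound \eqref{eq69} (yielding uniform Lipschitz control) and the non-degeneracy of $u_0$ on $\p C$ are indispensable.
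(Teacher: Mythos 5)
Your overall skeleton (blow up at $X_0$, show that rescaled free boundary points can only accumulate at the two boundary rays of the cone $\{u_0>0\}$, then use connectedness and a sign argument on $\sigma_1-x_0$ to distribute the two rays over $t\to0+$ and $t\to0-$) is the same as the paper's. But the central step is not actually proved. You assert that any accumulation point $\xi^*$ of $\xi_m=(\sigma(t_m)-X_0)/r_m$ lies in $\overline{\{u_0>0\}}$ because ``every neighbourhood of $\sigma(t_m)$ meets $\{u>0\}$; rescaling and passing to the limit gives $\xi^*\in\overline{\{u_0>0\}}$.'' This does not follow from locally uniform (or strong $W^{1,2}_{\mathrm{loc}}$) convergence of $u_m$ to $u_0$: the sets $\{u_m>0\}$ may contain shrinking ``islands'' whose closures converge to a point in the interior of $\{u_0=0\}$, entirely compatibly with $u_m\to u_0$ uniformly. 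You flag this yourself as ``the main technical obstacle'' but then only gesture at the Bernstein bound and the non-degeneracy of $u_0$ on $\partial C$, neither of which closes it — the Bernstein bound is an upper bound on $|\nabla u|$, and non-degeneracy of the \emph{limit} says nothing about $u_m$. What is actually needed, and what the paper uses, is that $\Delta u_m$ is a nonnegative Radon measure concentrated on $\partial\{u_m>0\}$ with surface density $\sqrt{h_0}$, together with an $\mathcal{H}^1$ density lower bound for the free boundary: for a ball $B_\rho(\tilde X)$ around a putative limit direction $\tilde X$ avoiding the two rays \emph{and the $x$-axis}, one gets $\Delta u_m(B_\rho(\tilde X))\ge C'\rho>0$, whereas strong convergence forces $\Delta u_m(B_\rho(\tilde X))\to\Delta u_0(B_\rho(\tilde X))=0$. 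Your argument never invokes the free boundary condition at the level of $u_m$, which is the only source of this quantitative non-degeneracy.

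Two further points. First, because the weight $(y_-)^\beta$ vanishes on the $x$-axis, the measure argument cannot exclude accumulation at the directions $0$ and $-\pi$; the paper therefore keeps $\{-\pi,0\}$ in the candidate set and only removes them at the end (connectedness makes $A_\pm$ singletons, and $\Delta u_0(\hat B_i)>0$ for balls around $\omega_1,\omega_2$ forces $\{a_+,a_-\}=\{\theta_1,\theta_2\}$). Your claim that accumulation points lie exactly in $\{\omega_1,\omega_2\}$ silently skips this degenerate-direction issue. Second, your final ``sign flip'' step — that uniform convergence ``produces free boundary points of $u$ arbitrarily close to each $\omega_j$'' — is also stated without proof; it can be justified (either via $\Delta u_m(\hat B_j)\to\Delta u_0(\hat B_j)>0$, or by noting that otherwise $u_m$ would be harmonic and positive throughout $\hat B_j$, making $u_0$ harmonic there yet vanishing on an open subset while positive on another, contradicting unique continuation), but as written it is another assertion where the paper supplies an argument.
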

	\begin{proof}
		We define arg$x$ as the complex argument of $x$ and the sets 
		$$A_{\pm}:=\left\lbrace \tilde{\theta}\in \left[- \pi,0\right] : \text{there is }\, t_m\rightarrow0\pm\,\text{such that arg}(\sigma(t_m)-\sigma(0))\rightarrow\tilde{\theta},\,\text{as}\,m\rightarrow\infty\right\rbrace .$$
		
		We claim that 
		$$A_{\pm}\subset\left\lbrace -\pi,-\frac{\beta+4}{2(\beta+2)}\pi ,-\frac{\beta}{2(\beta+2)}\pi,0\right\rbrace .$$
		
		Suppose towards a contradiction that a sequence $\{t_m\}$, such that as $m\rightarrow\infty$, $ t_m\rightarrow0,$ and \begin{center}
			arg$\sigma(t_m)-$arg$\sigma(0)\rightarrow\tilde{\theta}\in(A_+\cup A_-)\backslash\left\lbrace -\pi,-\frac{\beta+4}{2(\beta+2)}\pi ,-\frac{\beta}{2(\beta+2)}\pi,0\right\rbrace$.
		\end{center} 
		Let $r_m:=|\sigma(t_m)-\sigma(0)|$ and 
		$$u_m(X)=\dfrac{u(X_0+r_mX)}{r_m^{\frac{\beta+2}{2}}}.$$
		
		We denote $\tilde{X}=(\cos\tilde{\theta},\sin\tilde{\theta})$, obviously $|\tilde{X}|=1$.  And setting lines 
		\begin{equation*}
			l_1: y=\tan\left(-\frac{\beta}{2(\beta+2)}\pi\right)x,
		\end{equation*}
		and 
		\begin{equation*}
			l_2: y=\tan\left( -\frac{\beta+4}{2(\beta+2)}\pi\right)x.
		\end{equation*}
		For each $\rho>0$, we take a ball $B_{\rho}(\tilde{X})$, which does not intersect $l_1$, $l_2$ or $x$-axis. 
		
		Moreover, we suppose that $X_m=\frac{\sigma(t_m)-\sigma(0)}{r_m}$, then
		$$u_m(X_m):=\dfrac{u(X_0+\sigma(t_m)-\sigma(0))}{r_m^{\frac{\beta+2}{2}}}=\dfrac{u(\sigma(t_m))}{r_m^{\frac{\beta+2}{2}}},$$
		since $\sigma(t_m)\in\p\{u>0\}$, $u_m(X_m)=0$. That is to say, 
		$$X_m\in\p\{u_m>0\}.$$
	Clearly, $|X_m|=1$ and \begin{center}
			arg$X_m=$arg$\dfrac{\sigma(t_m)-\sigma(0)}{r_m}\rightarrow\tilde{\theta}$,
		\end{center} therefore $X_m\rightarrow \tilde{X}$. Hence  $B_{\rho/2}(X_m)\subset B_{\rho}(\tilde{X})$ holds when $m$ is large enough.
		
		We already get that 
		$$\Delta u_m\left( B_{\rho}(\tilde{X})\right) =\int_{B_{\rho}\cap\p\{u_m>0\}}\sqrt{(-x_0)^{\alpha}(_-y)^{\beta}}d\mathcal{H}^1\geq C'\rho, $$
		where $C'$ is a positive constant. However,  $u_m\rightarrow u_0$ strongly in $W^{1,2}_{\operatorname{loc}}(\mathbb{R}^2)$, we deduce that 
		$$\Delta u_m(B_{\rho}(\tilde{X}))\rightarrow\Delta u_0(B_{\rho}(\tilde{X}))=0,\,\,\text{as}\,\,m\rightarrow\infty,$$
		in the sense of measure, which contradicts with the fact that $\Delta u_m(B_{\rho}(\tilde{X}))>0$.
		
		If there exists a sequence	$\{t_m\}$ such that	$\sigma_1(t_m)=\sigma_1(0)$ as $t_m\rightarrow0$, then
		$$\arg\left( \sigma(t_m)-\sigma(0)\right) \rightarrow-\frac{\pi}{2},$$
		it follows from that $-\frac{\pi}{2}\in 
		A_{\pm}$, which contradicts with the claim. Therefore, we can deduce that $\sigma_1(t)\neq\sigma_1(0)$ for sufficiently small $t\neq0$. 
		
		Moreover, a continuity argument yields that both $A_+$ and $A_-$ are connected sets, that is, $A_+$ and $A_-$ contain only one element respectively. Consequently, we define the elements in $A_+$ and $A_-$ as 
		\begin{center}
			$a_+:=\lim\limits_{t\rightarrow0+}\operatorname{arg}\sigma(t)$
			and
			$a_-:=\lim\limits_{t\rightarrow0-}\operatorname{arg}\sigma(t)$
		\end{center}
		respectively.
		
		For $\rho>0$, we take $$\hat{B}_1=B_{\rho} \left( \cos\left(-\frac{\beta+4}{2(\beta+2)}\pi \right),\sin\left( -\frac{\beta+4}{2(\beta+2)}\pi\right) \right)$$ and    $$\hat{B}_2=B_{\rho} \left( \cos\left(-\frac{\beta}{2(\beta+2)}\pi \right),\sin\left( -\frac{\beta}{2(\beta+2)}\pi\right) \right) .$$ It is easy to see that $\Delta u_0(\hat{B}_i)>0,i=1,2$, which implies that $$\{a_+,a_-\}=\left\lbrace -\frac{\beta}{2(\beta+2)}\pi,-\frac{\beta+4}{2(\beta+2)}\pi\right\rbrace .$$ Thus we complete the proof.				 
	\end{proof}

	\section*{Appendix}
	\appendix
	The main results of this paper are presented in Table 1.
	
	\begin{table}[h]
		\caption{Conclusion}\label{table1}
		\centering
		\begin{tabular}{|m{1.7em}<{\centering}|m{4.3em}<{\centering}|m{4.6em}<{\centering}|m{4em}<{\centering}|m{2.5em}<{\centering}|m{16em}<{\centering}|m{2.5em}<{\centering}|}
			\hline
			Type&Stagnation point
			&Location of stagnation point &Force direction & Angle &  Weighted density& Profile
			\\
			\hline
			\multirow{4}{*}{1} & 	\multirow{4}{*}{$X_0$$=(x_0,0)$} & $x_0<0$ & $\theta_0=\frac{3\pi}{2}$& \multirow{4}{*}{ $\frac{2\pi}{\beta+2}$} & $(-x_0)^{\alpha}\int_{B_1\cap\left\lbrace -\frac{\beta+4}{2(\beta+2)}\pi <\theta<-\frac{\beta}{2(\beta+2)}\pi\right\rbrace } (y_-)^{\beta}dX$ & Fig. 6(a)\\
			\cline{3-4}\cline{6-7}
			& & $x_0>0$ & $\theta_0=\frac{\pi}{2}$& &$(x_0)^{\alpha}\int_{B_1\cap\left\lbrace \frac{\beta}{2(\beta+2)}\pi <\theta<\frac{\beta+4}{2(\beta+2)}\pi\right\rbrace } (y_+)^{\beta}dX$ & Fig. 6(b)\\
			\cline{3-4}\cline{6-7}
			& & $x_0<0$ & $\theta_0=\frac{\pi}{2}$ & & $(-x_0)^{\alpha}\int_{B_1\cap\left\lbrace \frac{\beta}{2(\beta+2)}\pi <\theta<\frac{\beta+4}{2(\beta+2)}\pi\right\rbrace } (y_+)^{\beta}dX$ & Fig. 7(a)\\
			\cline{3-4}\cline{6-7}
			& & $x_0>0$ & $\theta_0=\frac{3\pi}{2}$ & & $(x_0)^{\alpha}\int_{B_1\cap\left\lbrace -\frac{\beta+4}{2(\beta+2)}\pi <\theta<-\frac{\beta}{2(\beta+2)}\pi\right\rbrace } (y_-)^{\beta}dX$& Fig. 7(b)\\
			\hline
			\multirow{4}{*}{2} & 	\multirow{4}{*}{$X_0$$=(0,y_0)$} & $y_0<0$ & $\theta_0=\pi$ & \multirow{4}{*}{ $\frac{2\pi}{\alpha+2}$} & $(-y_0)^{\beta}\int_{B_1\cap\left\lbrace\frac{\alpha+1}{\alpha+2}\pi<\theta<\frac{\alpha+3}{\alpha+2}\pi\right\rbrace } (x_-)^{\alpha}dX$& Fig. 8(a)\\
			\cline{3-4}\cline{6-7}
			& & $y_0>0$ & $\theta_0=0$ & &$(y_0)^{\beta}\int_{B_1\cap\left\lbrace-\frac{\pi}{\alpha+2}<\theta<\frac{\pi}{\alpha+2}\right\rbrace } (x_+)^{\alpha}dX$ & Fig. 8(b)\\
			\cline{3-4}\cline{6-7}
			& & $y_0<0$ & $\theta_0=0$ & &$(-y_0)^{\beta}\int_{B_1\cap\left\lbrace-\frac{\pi}{\alpha+2}<\theta<\frac{\pi}{\alpha+2}\right\rbrace } (x_+)^{\alpha}dX$ & Fig. 9(a)\\
			\cline{3-4}\cline{6-7}
			& & $y_0>0$ & $\theta_0=\pi$ & &$(y_0)^{\beta}\int_{B_1\cap\left\lbrace\frac{\alpha+1}{\alpha+2}\pi<\theta<\frac{\alpha+3}{\alpha+2}\pi\right\rbrace } (x_-)^{\alpha}dX$ & Fig. 9(b)\\
			\hline
			3 & $X_0$$=(0,0)$ & $(0,0)$ & N/A & $\frac{2\pi}{\alpha+\beta+2}$ & $\int_{B_1\cap\left\lbrace \theta_1 <\theta<\theta_2\right\rbrace } |x|^{\alpha}|y|^{\beta}dX$& Fig. 10\\
			\hline
		\end{tabular}
	\end{table}
	
	\section*{Acknowledgments}
The authors would like to thank the anonymous referees for careful reading and for the valuable comments.

\section*{Conflict of interests/Competing Interests}	
The authors declare that they 
have no conflict of interests/competing interests.

\section*{Data availability statement}
My manuscript has no associated data.


\begin{thebibliography}{60}
	\bibitem{Alm}Almgren. F. J.,	Q-valued functions minimizing Dirichlet's integral and the regularity of area-minimizing rectifiable currents up to codimension 2, \emph{World Sci. Publ. River Edge, NJ.}, 1  (2000), xvi+955.
	\bibitem{Alt1981} Alt. H. W. and Caffarelli. L. A., Existence and regularity for a minimum problem with free boundary, \emph{J. Reine Angew. Math.}, 325 (1981), 105-144.
	\bibitem{Alt1982} Alt. H. W., Caffarelli. L. A.  and Friedman. A.,  Asymmetric jet flows, \emph{Comm. Pure Appl. Math.}, 35, 1 (1982), 29-68.
	\bibitem{A C F1982} Alt. H. W.,   Caffarelli. L. A.  and Friedman. A., Jet flows with gravity, \emph{J. Reine Angew. Math.},  331 (1982), 58-103.
	\bibitem{Alt1983} Alt. H. W.,   Caffarelli. L. A.  and Friedman. A., Axially symmetric jet flows, \emph{Arch. Ration. Mech. Anal.}, 81, 2  (1983), 97-149.
		\bibitem{Amick}Amick. C. J., Fraenkel. L. E.  and Toland. J. F., On the Stokes conjecture for the wave of extreme form, \emph{Acta Math.}, 148    (1982), 193-214.
		\bibitem{Brezis}Brezis. H., Functional Analysis, Sobolev Spaces and Partial Differential Equations, \emph{Springer, New York}, 2011.
		\bibitem{CJK}Caffarelli. L. A., Jerison. D.  and Kenig. C. E., Global energy minimizers for free boundary problems and full regularity in three dimensions,   \emph{ Contemp. Math.}, 350 (2004), 83-97.		
		\bibitem{C D W2018}
	Cheng. J., Du. L.  and  Wang. Y.,
	The existence of steady compressible subsonic impinging jet flows, \emph{Arch. Ration. Mech. Anal.}, 229, 3 (2018), 953-1014.
	\bibitem{C D X2019}Cheng. J., Du. L.  and Xiang. W., Incompressible jet flows in a de Laval nozzle with smooth detachment, \emph{Arch. Ration. Mech. Anal.}, 232, 2(2019), 1031-1072.
	\bibitem{CD2023}
	Cheng. J., Du. L.  and Xin. Z.,
	Incompressible impinging jet flow with gravity, \emph{Calc. Var. Partial Dif. Equ.}, 62, 4 (2023),  110.
	\bibitem{CA} Constantin. A.  and Johnson. R. S., Propagation of very long water waves, with vorticity, over variable depth, with applications to tsunamis, \emph{Fluid Dyn. Res.}, 40, 3 (2008), 175-211.			\bibitem{DP}
	Dellar. P.  and  Salmon. R., Shallow water equations with a complete Coriolis force and topography, \emph{Phys. Fluids}, 17, 10  (2005), 106601.	
	\bibitem{DHP}Du. L., Huang. J.  and Pu. Y., The free boundary of steady axisymmetric inviscid flow with  vorticity I: near the degenerate point,
	\emph{Commum. Math. Phys.}, 400 (2023), 2137-2179.
	\bibitem{DY}Du. L. and Yang. C., The free boundary of steady axisymmetric inviscid flow with  vorticity \uppercase\expandafter{\romannumeral2}: near the non-degenerate points,
	\emph{ Commum. Math. Phys., }405, 262 (2024).
	\bibitem{Evans}Evans. L. C.  and M$\ddot{\rm{u}}$ller. S., Hardy spaces and the two-dimensional Euler equations with nonnegative vorticity, \emph{J. Amer. Math. Soc.}, 7, 1 (1994) 199-219.
\bibitem{FS}Figalli. A.  and Shahgholian. H.,
An overview of unconstrained free boundary problems, 
\emph{Philos. Trans. Roy. Soc. A.}, 373, 2050 (2015), 20140281.
\bibitem{Gilbarg} Gilgarg. D. and Trudinger. N. S.,  Elliptic Partial Differential Equations of Second Order, second edition, \emph{ Springer-Verlag, Berlin}, 1983.	
\bibitem{GG}
Gui. G., Liu. Y.,  Luo. W.  and Yin. Z., On a two dimensional nonlocal shallow-water model, \emph{Adv. Math.}, 392 (2021), 108021.
	\bibitem{IP}
Ionescu. A.   and  Pusateri. F., Global solutions for the gravity water waves system in 2D,  \emph{Invent. Math.}, 199, 3 (2015), 653-804.		
		\bibitem{JS} Jerison. D.  and Savin. O., Some remarks on stability of cones for the one-phase free boundary problem, \emph{Geom. Funct. Anal.}, 25, 4  (2015), 1240-1257.
\bibitem{MCC1}		
	McCurdy, S.,
	One-phase free-boundary problems with degeneracy, \emph{Calc. Var. Partial Dif. Equ.,} 63, 1 (2024), 10.
	\bibitem{MCC2}
	McCurdy, S.,
	Conditions for eliminating cusps in one-phase free-boundary problems with degeneracy,  \emph{Taiwanese J. Math.,} 28, 2 (2024), 359–376.
	\bibitem{NA}
Naber. A. and  Valtorta. D.,
Rectifiable-Reifenberg and the regularity of stationary and minimizing harmonic maps, \emph{Ann. of Math.},  185, 1 (2017), 131–227.
		\bibitem{Plotnikov2004}Plotnikov. P. I., Proof of the Stokes conjecture in the theory of surface waves, \emph{Stud. Appl. Math.}, 108, 2 (2002), 217-244. 
		\bibitem{Spo}
	Spolaor. L.,
	Monotonicity formulas in the calculus of variation,
	\emph{	Notices Amer. Math. Soc.}, 69, 10 (2022),  1731-1737.
		\bibitem{VARVAR2011}V$\breve{a}$rv$\breve{a}$ruc$\breve{a}$. E. and Weiss. G. S., A geometric approach to generalized Stokes conjectures,  \emph{Acta Math.}, 206, 2 (2011), 363-403.
	\bibitem{VARVAR2012}V$\breve{a}$rv$\breve{a}$ruc$\breve{a}$. E.  and Weiss. G. S., The Stokes conjecture for waves with vorticity, \emph{Ann. Inst. H. Poincare Anal. Non Lineaire}, 29,  6 (2012), 861-885.
	\bibitem{VARVAR2014}V$\breve{a}$rv$\breve{a}$ruc$\breve{a}$. E.  and Weiss. G. S., Singularities of steady axisymmetric free surface flows with gravity, \emph{Comm. Pure Appl. Math.}, 67,  8 (2014), 1263-1306.
\bibitem{W}Weiss. G. S., Partial regularity for a minimum problem with free boundary, \emph{J. Geom. Anal.}, 9,  2 (1999), 317-326.
	\bibitem{WG}Weiss. G. S., Boundary monotonicity formulae and applications to free boundary problems I: the elliptic case, \emph{Electron. J. Differ. Eq.}, (2004), 44.
		\bibitem{WS} Wu. S., Almost global well posedness of the 2D full water wave problem, \emph{Invent. Math.}, 177,1  (2009), 45-135.	
		\bibitem{ZJLS}
		Zheng. P., Tang. J.,  Leng. L. and Li. S.,
		Solving nonlinear ordinary differential equations with variable coefficients by elastic transformation method, \emph{J. Appl. Math. Comput.,}  69, 1 (2023), 1297–1320.
	\end{thebibliography}
\end{document}